\definecolor{mygray}{gray}{0.8}
\newtheorem{theorem}{Theorem}[section]
\newtheorem{lemma}[theorem]{Lemma}
\newtheorem{proposition}[theorem]{Proposition}
\theoremstyle{definition}
\newtheorem{definition}[theorem]{Definition}
\newtheorem{remark}[theorem]{Remark}
\newcommand{\gen}{\operatorname{gen}}
\newcommand{\dist}{\operatorname{dist}}
\newcommand{\essinf}{\operatorname{ess\,inf}}
\begin{document}
	
	
\title[New characterizations of Muckenhoupt $A_p$ distance weights for $p>1$]{New characterizations of Muckenhoupt $A_p$ distance weights for $p>1$}
	
\author[Ignacio G\'{o}mez Vargas]{Ignacio G\'{o}mez Vargas}
\subjclass[]{42B37, 42B25, 28A75, 42B25}
\keywords{Muckenhoupt weights, distance functions, weak porosity}
\begin{abstract}
  We characterize the collection of sets \( E \subset \mathbb{R}^n \) for which there exists \( \theta \in \mathbb{R}\setminus\{0\} \) such that the distance weight \( w(x) = \operatorname{dist}(x, E)^\theta \) belongs to the Muckenhoupt class \( A_p \), where \( p > 1 \). These sets exhibit a certain balance between the small-scale and large-scale pores that constitute their complement—a property we show to be more general than the so-called weak porosity condition, which in turn, and according to recent results, characterizes the sets with associated distance weights in the \( A_1 \) case. Furthermore, we verify the agreement between this new characterization and the properties of known examples of distance weights, that are either \(A_p\) weights or merely doubling weights, by means of a probabilistic approach that may be of interest by itself.
  
  
\end{abstract}

\maketitle

\section{Introduction}
\label{sec1}

The theory of Muckenhoupt weights has been a central part of harmonic analysis for decades due to their relation to the boundedness of important operators such as the Hardy–Littlewood maximal operator and Calderón–Zygmund operators. These properties make them suitable for obtaining various Sobolev-type inequalities and embeddings, which are particularly useful for proving regularity properties of solutions to PDEs on a given domain $\Omega\subset \mathbb{R}^n$. In this regard, and especially when the boundary $\partial\Omega$ is not smooth enough to allow for the existence of well-behaved solutions in standard Sobolev spaces, the use of weights of the form $x \mapsto \text{dist}(x,E)^\theta$, where $E \subset \partial\Omega$, has been considered by numerous authors, since these weights are sometimes able to diminish the effects of boundary roughness on the behavior of the solution and its derivatives (see \cite{LOPEZ,AIMAR} and references therein). We refer to functions of this sort as \textit{distance weights}.

Due to their aforementioned applications, the problem of determining when a set admits distance weights belonging to some $A_p$ class has become of increasing interest lately. Let us observe that classical examples of $A_p$ weights, often found in textbooks, are given by $w(x) = |x|^\alpha$ with $-n < \alpha < n(p - 1)$, which can be considered as distance weights associated with the set $E = \{0\}$. Furthermore, several articles have contributed to finding sufficient conditions on the set $E$ to ensure the desired properties of its distance weights, both in $\mathbb{R}^n$ and in more general spaces. These conditions usually rely on measure-theoretical characterizations of such sets, such as basic notions of porosity, dimensional bounds, Ahlfors regularity and more \cite{AIKAWA,AIMAR,DYDA,VAHAKANGAS}.

Focusing on the $p = 1$ case—meaning when $\text{dist}(\cdot, E)^\theta \in A_1$—we can affirm that there has been major progress in the intended characterization. The first step was taken by A. V. Vasin in his study of BLO functions on the unit circle, giving rise to the concept of \textit{weak porosity} \cite{VASIN}. Years later, Anderson et al. extended this notion to $\mathbb{R}^n$ and proved that a set $E$ is weakly porous if and only if $\text{dist}(\cdot, E)^{-\alpha} \in A_1$ for some $\alpha > 0$ \cite{ANDERSON}—it is worth mentioning here that only negative powers can give rise to non-trivial distance weights when $p = 1$. Since then, weakly porous sets have been defined in more abstract spaces in subsequent works \cite{NOSOTROS,MUDARRA}. See also \cite{NOS_LATERAL} for a one-sided version of this notion. Nevertheless, in this paper, we will limit our discussion to $\mathbb{R}^n$. Roughly speaking, a weakly porous set $E\subset\mathbb{R}^n$ is a set such that all cubes $Q$ in $\mathbb{R}^n$
contain a finite sequence of pairwise disjoint dyadic subcubes $Q_1,\hdots,Q_N$ which do not intersect $E$ and whose measures sum at least a fixed proportion of the measure of $Q$ (see Section \ref{sec6} for further details). By recursively applying the former condition on a cube $Q$ and some of its smaller subcubes, it is possible to prove that the fraction of $Q$ occupied by a collection of dyadic cubes decreases exponentially as their lengths get smaller, showing that the weak porosity condition has very strong implications for the global distribution of pores within any given cube.

In this work, we consider the task of characterizing the sets $E \subset \mathbb{R}^n$ that admit distance weights in the $A_p$ class for $1 < p < \infty$, which requires proposing a more general condition than weak porosity on such sets due to the fact that $A_1 \subsetneq A_p$. As a result, we obtain two new characterizations based on the distribution of dyadic pores in $Q \setminus\overline{E}$, where $Q$ is any cube in $\mathbb{R}^n$. Among these, the second characterization is not only somewhat simpler, but also more reminiscent of the weak porosity condition, and can be stated as follows. For a fixed cube $Q_0\subset\mathbb{R}^n$, consider the collection $$\mathcal{D}_E(Q_0)=\{Q\in\mathcal{D}(Q_0):Q\cap \overline{E}=\emptyset\land\pi Q\cap \overline{E}\neq\emptyset\},$$
where $\mathcal{D}(Q_0)$ is the dyadic family of subcubes of $Q_0$ and $\pi Q$ denotes the dyadic parent of a given $Q\in\mathcal{D}(Q_0)$ (we refer to Section \ref{sec2} for further details). The condition to be established is then the existence of constants $0<s<1$ and $C_0>0$ such that
\begin{equation}
    \label{eq0}
    0<\frac{\sup\{L\geq0:\sum_{Q\in\mathcal{D}_E(Q_0)\land l(Q)\geq L}|Q|\geq s|Q_0|\}}{\inf\{L\geq0:\sum_{Q\in\mathcal{D}_E(Q_0)\land l(Q)\leq L}|Q|\geq s|Q_0|\}}\leq C_0
\end{equation}
for every cube $Q_0\subset\mathbb{R}^n$ intersecting $\overline{E}$ and where $l(Q)$ has been used to denote the side length of $Q$. The main result of this paper is the following.

\begin{theorem}
    \label{teo0}
    Let $E\subset\mathbb{R}^n$ be a non-empty set. Then, the next statements are equivalent.
    \begin{itemize}
        \item[(I)] There exist $C_0>0$ and $0<s<(1+2^n)^{-1}$  such that \eqref{eq0} holds for every cube $Q_0\subset\mathbb{R}^n$ intersecting $\overline{E}$;
        \item[(II)] For any $p\in(1,\infty)$, there exists $\theta\neq0$ such that $\dist(\cdot,E)^\theta\in A_p$.
    \end{itemize}
\end{theorem}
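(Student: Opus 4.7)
The backbone is a standard reduction. For a cube $Q_0$ meeting $\overline{E}$, the family $\mathcal{D}_E(Q_0)$ partitions $Q_0 \setminus \overline{E}$ and $\dist(x, E) \sim l(Q)$ on each $Q \in \mathcal{D}_E(Q_0)$; together with the observation that $w = \dist(\cdot,E)^\theta \in A_p$ forces $|\overline{E}| = 0$, this rewrites the $A_p$ condition on $Q_0$ as the one-dimensional Muckenhoupt-type bound
\begin{equation}\label{eq:ap1d}
\Bigl(\int_0^\infty L^\theta \, d\mu_{Q_0}\Bigr)\Bigl(\int_0^\infty L^{-\theta/(p-1)} \, d\mu_{Q_0}\Bigr)^{p-1} \lesssim |Q_0|^p
\end{equation}
on the scale measure $\mu_{Q_0} := \sum_{Q \in \mathcal{D}_E(Q_0)} |Q|\,\delta_{l(Q)}$.

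The direction (II) $\Rightarrow$ (I) follows immediately. Applying \eqref{eq:ap1d} on any cube $Q_0$ meeting $\overline{E}$ (for the $p$ and corresponding $\theta$ witnessing (II); assume $\theta > 0$, the other sign being handled identically), the defining bounds $\mu_{Q_0}([L^+, \infty)) \geq s|Q_0|$ and $\mu_{Q_0}((0, L^-]) \geq s|Q_0|$ give
\[
\int L^\theta \, d\mu_{Q_0} \geq s (L^+)^\theta |Q_0|, \qquad \int L^{-\theta/(p-1)} \, d\mu_{Q_0} \geq s (L^-)^{-\theta/(p-1)} |Q_0|,
\]
and inserting into \eqref{eq:ap1d} yields $(L^+/L^-)^\theta \leq [w]_{A_p}/s^p$. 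This is (I) for any preselected $s \in (0, (1+2^n)^{-1})$ and $C_0 := ([w]_{A_p}/s^p)^{1/\theta}$.

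For the harder direction (I) $\Rightarrow$ (II), fix $p \in (1, \infty)$. Cubes $Q \subset \mathbb{R}^n$ disjoint from $\overline{E}$ are handled by a case split — either $\dist(\cdot, E)$ varies by only a multiplicative constant on $Q$, or $Q$ can be enlarged by a bounded factor to a cube meeting $\overline{E}$ — so it suffices to establish \eqref{eq:ap1d} uniformly over cubes $Q_0$ meeting $\overline{E}$. The pivotal technical input is a \emph{decay lemma}: there exist constants $\rho \in (0,1)$ and $K \in \mathbb{N}$, depending only on $n$, $s$, $C_0$, so that
\[
\mu_{Q_0}\bigl((0, 2^{-jK} l(Q_0)]\bigr) \leq \rho^j |Q_0|
\]
for every $j \geq 0$ and every $Q_0$ meeting $\overline{E}$. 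Its proof iterates (I) on the $\overline{E}$-intersecting dyadic subcubes $S \subset Q_0$: condition (I) applied to each $S$ forces at least a fraction $1 - 2s$ of $|S|$ to be occupied by Whitney cubes at scales within a factor $C_0$ of $L^+(S)$, and the threshold $s < (1+2^n)^{-1}$ is precisely what ensures that the cumulative mass transferred out of $F_k := \bigcup \mathcal{F}_k$ (the union of scale-$2^{-k} l(Q_0)$ dyadic subcubes meeting $\overline{E}$) over every block of $K$ dyadic levels strictly exceeds $(1-\rho)|F_k|$ for some $\rho<1$, outpacing the $2^n$-fold branching of each dyadic refinement. Granted the lemma, take $\theta > 0$ small enough that $2^{K\theta/(p-1)} \rho < 1$: the trivial bound $\int L^\theta d\mu_{Q_0} \leq l(Q_0)^\theta |Q_0|$, together with a geometric block summation
\[
\int L^{-\theta/(p-1)} d\mu_{Q_0} \leq \sum_{j \geq 0} 2^{(j+1)K\theta/(p-1)} l(Q_0)^{-\theta/(p-1)} \rho^j |Q_0| \lesssim l(Q_0)^{-\theta/(p-1)} |Q_0|
\]
delivers \eqref{eq:ap1d}. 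The main obstacle is the decay lemma itself, since (I) is a single-cube, one-scale-ratio statement while what is needed is uniform multiscale geometric decay; the numerical threshold $s < (1+2^n)^{-1}$ has to be carefully exploited to prevent the recursive mass transfer from stalling against the $2^n$-fold dyadic branching at each step.
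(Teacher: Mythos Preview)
Your direction (II)$\Rightarrow$(I) is correct and matches the paper's argument.

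The direction (I)$\Rightarrow$(II), however, has a genuine gap: the decay lemma you state is false. Take $E=\lambda\mathbb{Z}\subset\mathbb{R}$ for small $\lambda>0$. Since this is a rescaling of $\mathbb{Z}$, the ratio $\mathcal{L}_{Q}(s)/\mathcal{S}_{Q}(s)$ is scale-invariant, so $E$ satisfies (I) with the \emph{same} constants $s,C_0$ as $\mathbb{Z}$ does, independently of $\lambda$. But for $Q_0=[0,1)$ every dyadic subcube of generation less than $\log_2(1/\lambda)$ meets $E$, so $\mu_{Q_0}\bigl((0,2^{-jK}l(Q_0)]\bigr)=|Q_0|$ whenever $jK<\log_2(1/\lambda)$. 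Letting $\lambda\to 0$ destroys any choice of $K,\rho$ depending only on $n,s,C_0$. The same example shows that your target bound $\int L^{-\theta/(p-1)}\,d\mu_{Q_0}\lesssim l(Q_0)^{-\theta/(p-1)}|Q_0|$ cannot hold with uniform constants: a direct computation gives $\int L^{-\alpha}\,d\mu_{Q_0}\sim \lambda^{-\alpha}$, which blows up. In fact the $F_k$ you introduce (the union of generation-$k$ cubes meeting $\overline E$) satisfies $|F_k|=|Q_0|$ for all $k<\log_2(1/\lambda)$, so no mass is ``transferred out'' at those levels; geometric decay of $|F_k|$ relative to $l(Q_0)$ is essentially weak porosity, which is strictly stronger than (I).

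What goes wrong is that (I) only controls the \emph{ratio} of characteristic pore scales, not their absolute size relative to $l(Q_0)$. The trivial bound $\int L^{\theta}\,d\mu_{Q_0}\le l(Q_0)^{\theta}|Q_0|$ is far too crude to compensate. The paper fixes this by replacing $l(Q_0)$ with the intrinsic scale $\mathcal{L}_{Q_0}(\tfrac12)$: it first bootstraps (I) to the two-sided estimate $\mathcal{L}_{Q_0}(t)\le C_0 t^{-\sigma}\mathcal{S}_{Q_0}(t)$ for all $t\in(0,1)$ (this is where the stopping-time argument and the constraint $s<(1+2^n)^{-1}$ are used), and then shows separately that $\int L^{-\alpha}\,d\mu_{Q_0}\lesssim \mathcal{L}_{Q_0}(\tfrac12)^{-\alpha}|Q_0|$ and $\int L^{\alpha/(p-1)}\,d\mu_{Q_0}\lesssim \mathcal{L}_{Q_0}(\tfrac12)^{\alpha/(p-1)}|Q_0|$ for small $\alpha$. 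The intrinsic scale cancels in the product. Your argument can be repaired along the same lines, but it requires decay control on \emph{both} tails of $\mu_{Q_0}$, measured from $\mathcal{L}_{Q_0}(\tfrac12)$ (or $\mathcal{S}_{Q_0}(\tfrac12)$) rather than from $l(Q_0)$.
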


When examining the condition stated in Theorem~\ref{teo0}~(i), we observe that it does not depend on the so-called maximal dyadic pore---closely related to the quantity \( \essinf_{Q_0} \dist(\cdot, E)^{-1} \), which plays a central role in the \( A_1 \) case---in contrast with the weak porosity condition to be discussed in Section~\ref{sec6}, which strongly depends on this object. Instead, there appears to be a balance between the side lengths associated with the \( s \)-fractions of both the smallest and largest dyadic pores within \( Q_0 \setminus \overline{E} \), which aligns more closely with the nature of \( A_p \) weights. We will show that, between the two, weak porosity is the more restrictive property. In particular, every weakly porous set \( E \subset \mathbb{R}^n \) satisfies condition~\eqref{eq0} for suitable values of \( s \) and \( C_0 \). Furthermore, we aim to demonstrate throughout this work that the proposed characterization has strong implications for the global distribution of pores in \( Q_0 \setminus \overline{E} \), in analogy with the phenomenon previously discussed in the \( p = 1 \) case.

The structure of the paper is as follows. In Section~\ref{sec2}, we establish some basic facts about distance weights. Section~\ref{sec3} introduces fundamental concepts such as the \( t \)-fractions of the smallest and largest pores, which will be essential in Section~\ref{sec4} for obtaining a first characterization of the relevant sets in the case \( p > 1 \). This initial characterization relies on a relation between the $t$-fractions of the smallest and largest pores for all $0<t<1$ (see Theorems \ref{teo3} and \ref{teo4}), in contrast with the second, simpler characterization proposed in Section~\ref{sec5} and which depends on a single (small enough) value $s$ (as in Theorem \ref{teo0}). In Section~\ref{sec6}, we prove that weakly porous sets satisfy this condition. Finally, a probabilistic approach will be presented to analyze examples of distance weights introduced in previous papers.

It is worthy to mention that shortly after a first draft of this paper was made public, an independent work by M. Pasquariello and I. Uriarte-Tuero appeared \cite{PASQUARIELLO}, in which a very similar version of Theorem~\ref{teo0} was proved. Their approach differs substantially from the one discussed here: they introduce a new characterization of BMO functions obtained through a sparse domination formula by medians, which they subsequently use to derive their characterization for distance weights. In addition, they investigate the range of exponents $\theta$ for which the weight $w(x)=\dist(x,E)^\theta$ belongs to $A_p$, provide new concrete examples of such weights, and examine the validity of Poincaré and Hardy–Sobolev weighted inequalities.

\section{Preliminary results on distance weights}
\label{sec2}

The setting of this work will be the Euclidean space $\mathbb{R}^n$ and,  as usual, we denote by $|A|$ the Lebesgue measure of a measurable set $A\subset\mathbb{R}^n$. Due to the particular notions of porosity we will have to deal with, it will be convenient to make use of cubes and systems of dyadic subcubes within them. A given set $Q\subset\mathbb{R}^n$ is said to be a cube if it can be written as a product of the form
$$Q=[a_1,b_1)\times\hdots\times[a_n,b_n),$$
where $|b_i-a_i|$ is constant for every index $1\leq i\leq n$. We refer to this quantity as the side length of $Q$ and denote it by $l(Q)$. In particular, the cubes considered here are half-open and parallel to the coordinate axes.

For a given cube $Q_0\subset\mathbb{R}^n$, we refer to its dyadic system as the collection
$$\mathcal{D}(Q_0)=\bigcup_{j\in\mathbb{N}_0} \mathcal{D}^j(Q_0),$$
where each $\mathcal{D}^j(Q_0)$ is uniquely defined as the family composed by $2^{jn}$ pairwise disjoint cubes $Q$, with $l(Q)=2^{-j}l(Q_0)$ and such that $Q_0=\bigcup_{Q\in\mathcal{D}^j(Q_0)}Q$, for each $j\in\mathbb{N}_0$. The cubes $Q\in\mathcal{D}(Q_0)$ are referred to as dyadic (sub)cubes of $Q_0$, and if $Q\in\mathcal{D}^j(Q_0)$, we say that $Q$ is a dyadic cube of generation $j$ and write $\gen(Q)=j$. Clearly, the generation of any $Q\in\mathcal{D}(Q_0)$ is well-defined and unique. Other properties of dyadic cubes worthy to mention are:
\begin{itemize}
    \item[$\bullet$] If $Q\in\mathcal{D}^j(Q_0)$, where $j>0$, then there exists a unique cube $\pi Q$ of generation $j+1$ such that $Q\subset\pi Q$. We say that $Q$ is a dyadic child of $\pi Q$, meanwhile $\pi Q$ is the dyadic parent of $Q$.
    \item[$\bullet$] Each $Q\in\mathcal{D}(Q_0)$ has exactly $2^n$ dyadic children.
    \item[$\bullet$] If $Q,Q'\in\mathcal{D}(Q_0)$ and $Q\cap Q'\neq\emptyset$, then either $Q\subset Q'$ or $Q'\subset Q$.
\end{itemize}

On another note, we say a function $w:\mathbb{R}^n\to\mathbb{R}$ is a weight in $\mathbb{R}^n$ if $w>0$ a.e$.$ and if it is locally integrable. For $1<p<\infty$, the Muckenhoupt class $A_p$ is the set of all weights $w$ defined in $\mathbb{R}^n$ for which there exists some constant $C>0$ such that
\begin{equation}
    \label{eq1}
    \Big(\fint_Qw(x)dx\Big)\Big(\fint_Qw(x)^{-\frac{1}{p-1}}dx\Big)^{p-1}\leq C
\end{equation}
for every cube $Q$ in $\mathbb{R}^n$. Here, $\fint_Aw(x)dx=|A|^{-1}\int_Aw(x)dx$ stands for the mean value of $w$ on $A$, where $A\subset\mathbb{R}^n$ is a measurable set with $0<|A|<\infty$. For $p=1$, the $A_1$ class is made up of all weights $w$ for which there exists $C>0$ such that
\begin{equation}
    \label{eq2}
    \fint_Qw(x)dx\leq C\essinf_{x\in Q}w(x)
\end{equation}
for every cube $Q$ in $\mathbb{R}^n$. For a given weight $w$, the smallest possible constant for which either \eqref{eq1} or \eqref{eq2} holds is known as the $A_p$ or $A_1$ constant and denoted by $[w]_{A_p}$ or $[w]_{A_1}$, as appropriate. 

For any non-empty set $E\subset\mathbb{R}^n$, we denote by $\text{dist}(x,E)=\inf_{e\in E}|x-e|$, this is, $x\mapsto\dist(x,E)$ is the distance function to $E$. As discussed in the introduction, we are interested in determining when a set $E$ admits (non-constant) distance weights $\text{dist}(\cdot,E)^\theta\in A_p$ for some $\theta\in\mathbb{R}$ and $1\leq p<\infty$, in such a case, we colloquially refer to $E$ as an $A_p$ \textit{set}. Since $\dist(\cdot,E)=\dist(\cdot,\overline{E})$ (where $\overline{E}$ stands for the closure of $E$), clearly $E$ is an $A_p$ set if and only if $\overline{E}$ is an $A_p$ set. With this in mind, we will generally assume that $E$ is closed in order to avoid cumbersome notation. 

It turns out dyadic cubes are useful to discern when these kinds of weights happen to verify Muckenhoupt's conditions. Recall from the introduction that
$$\mathcal{D}_E(Q_0)=\{Q\in\mathcal{D}(Q_0):Q\cap \overline{E}=\emptyset\land\pi Q\cap \overline{E}\neq\emptyset\},$$
resulting in a non-empty collection whenever $Q_0$ intersects $\overline{E}$. In this case, such collection allows to cover $Q_0\setminus\overline{E}$ by a family of dyadic cubes which happen to be close to the set $E$, as shown in the following result.

\begin{lemma}
    \label{lemma1}
    If $E$ is a closed non-empty set with $|E|=0$ and $Q_0$ is a cube such that $Q_0\cap E\neq\emptyset$, then $\mathcal{D}_E(Q_0)$ is an infinite family of pairwise disjoint dyadic cubes such that
    $$\bigcup_{Q\in\mathcal{D}_E(Q_0)}Q=Q_0\setminus E.$$
\end{lemma}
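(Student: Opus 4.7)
The plan is to verify the three assertions separately: pairwise disjointness, that the union equals $Q_0\setminus E$, and infiniteness of the family.

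Pairwise disjointness should be the easiest step, flowing directly from the dyadic nesting properties stated above. I would suppose that two members $Q,Q'\in\mathcal{D}_E(Q_0)$ meet; nesting then forces one to be contained in the other, say $Q\subsetneq Q'$, so that $\pi Q\subseteq Q'$. Since $Q'\cap\overline{E}=\emptyset$, the same holds for $\pi Q$, contradicting $Q\in\mathcal{D}_E(Q_0)$.

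For the covering identity the inclusion ``$\subseteq$'' is immediate from the definition (recalling that $\overline{E}=E$). For the reverse inclusion, given $x\in Q_0\setminus E$, I would use that $E$ is closed to ensure $\dist(x,E)>0$. Letting $Q_x^j$ denote the unique cube in $\mathcal{D}^j(Q_0)$ containing $x$, I would pick the smallest $j^*$ such that $Q_x^{j^*}\cap E=\emptyset$; this exists because $l(Q_x^j)\to 0$ eventually makes $Q_x^j$ fit inside a ball of radius smaller than $\dist(x,E)$, and it is positive because $Q_x^0=Q_0$ meets $E$ by hypothesis. Then $\pi Q_x^{j^*}=Q_x^{j^*-1}$ still meets $E$ by minimality, so $Q_x^{j^*}\in\mathcal{D}_E(Q_0)$ and contains $x$.

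The main obstacle is infiniteness, which I would attack by contradiction. If $\mathcal{D}_E(Q_0)$ were finite, there would be a largest generation $j^*$ of its members. The key observation is a hereditary property: for each $j\geq j^*$, any dyadic cube $Q$ of generation $j$ meeting $E$ must have all $2^n$ of its children meet $E$ as well, otherwise an exceptional child would lie in $\mathcal{D}_E(Q_0)$ at generation $j+1>j^*$. Setting $F_j=\bigcup\{Q\in\mathcal{D}^j(Q_0):Q\cap E\neq\emptyset\}$, this yields $|F_{j+1}|=|F_j|$ for all $j\geq j^*$. On the other hand $\{F_j\}_{j\geq 0}$ is a decreasing sequence of measurable sets whose intersection equals $E\cap Q_0$ (using that $E$ is closed and $l(Q_x^j)\to 0$), so continuity from above of Lebesgue measure forces $|F_j|\to|E\cap Q_0|=0$. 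Hence $|F_{j^*}|=0$, which is absurd since $F_{j^*}$ contains $Q_e^{j^*}$ for any $e\in Q_0\cap E$, a cube of positive measure. This contradiction completes the argument.
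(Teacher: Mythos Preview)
Your proof is correct. The arguments for pairwise disjointness and for the covering identity are essentially identical to the paper's (the paper also picks the maximal dyadic subcube avoiding $E$ and uses the nesting property to derive the same contradiction for disjointness).

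The only real difference is in the infiniteness step. The paper argues more locally: assuming $\mathcal{D}_E(Q_0)=\{Q_i\}_{i=1}^N$ with $m=\max_i\gen(Q_i)$, it fixes $y\in Q_0\cap E$, takes the generation-$m$ cube $Q^*\ni y$, and shows that $Q^*$ cannot meet any $Q_i$ (otherwise $Q^*\subset Q_i$ would force $y\in Q_i$), whence $Q^*\subset Q_0\setminus\bigcup_i Q_i=E$, contradicting $|E|=0$. Your route is measure-theoretic instead: the hereditary observation freezes $|F_j|$ past generation $j^*$, while continuity from above drives $|F_j|\to|E\cap Q_0|=0$. Both arguments ultimately exploit that a set of measure zero cannot contain a cube; the paper's version is marginally more elementary (no appeal to continuity of measure), whereas yours extracts an additional structural fact---that beyond generation $j^*$ every cube meeting $E$ splits entirely into cubes meeting $E$---which is a pleasant byproduct even if not needed elsewhere.
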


\begin{proof}
    Since $|E|=0$, $Q_0\setminus E$ is not empty. Thus, for each $x$ belonging to this set we can find, using that $E$ is closed, some cube $Q\in\mathcal{D}(Q_0)$ containing $x$ and with $Q\cap E=\emptyset$. Let $Q_x$ be the largest cube with this property. Then, by maximality, $Q_x\in\mathcal{D}_E(Q_0)$. This shows that $\mathcal{D}_E(Q_0)\neq\emptyset$ and 
    $$Q_0\setminus E\subset\bigcup_{Q\in\mathcal{D}_E(Q_0)}Q\subset Q_0\setminus E.$$
    On other hand, if $Q_1,Q_2\in\mathcal{D}_E(Q_0)$ are such that $Q_1\cap Q_2\neq\emptyset$, we claim that $Q_1=Q_2$. Notice that either $Q_1\subseteq Q_2$ or $Q_1\supseteq Q_2$ because of the properties of dyadic cubes. Let us assume, without loss of generality, that $Q_1\subsetneq Q_2$. It follows that $\pi Q_1\subseteq Q_2$, but then $\emptyset\neq\pi Q_1\cap E\subset Q_2\cap E$, which contradicts the fact that $Q_2\in\mathcal{D}_E(Q_0)$ and so the claim holds. 
    
    It remains to show that $\mathcal{D}_E(Q_0)$ is infinite. Assume, on the contrary, that $\mathcal{D}_E(Q_0)=\{Q_i\}_{i=1}^N$ and let $m=\max_{1\leq i\leq N}\gen(Q_i)$. Next, pick $y\in Q_0\cap E$ and let $Q^*\in\mathcal{D}(Q_0)$ be the cube such that $y\in Q^*$ and $\gen(Q^*)=m$. Let us consider the intersection $Q^*\cap(\bigcup_{1\leq i\leq N}Q_i)$. If this intersection were empty, then
    \begin{align*}
        Q^*\subset Q_0\setminus\bigcup_{1\leq i\leq N}Q_i=Q_0\setminus\bigcup_{Q\in\mathcal{D}_E(Q_0)}Q=E,
    \end{align*}
    by the first part of the proof. But this is a contradiction since $|E|=0<|Q^*|$. So we must have $Q^*\cap Q_i\neq\emptyset$ for some $1\leq i\leq N$. Since $\gen(Q^*)\geq\gen(Q_i)$, this implies that $y\in Q^*\subset Q_i$ which, once again, is a contradiction since $Q_i\in\mathcal{D}_E(Q_0)$ and $y\in E$. Consequently, $\mathcal{D}_E(Q_0)$ contains infinitely many dyadic cubes.
\end{proof}

\begin{lemma}
    \label{lemma2}
    Fix $\theta>-1$ and let $E\subset\mathbb{R}^n$ be a closed non-empty set with $|E|=0$. Then, there exist constants $0<C_1\leq C_2<\infty$ depending only on $n$ and $\theta$ such that \begin{equation}
        \label{eq3}
        C_1\sum_{Q\in\mathcal{D}_E(Q_0)}l(Q)^{\theta}\frac{|Q|}{|Q_0|}\leq\fint_{Q_0}\dist(x,E)^{\theta}dx\leq C_2\sum_{Q\in\mathcal{D}_E(Q_0)}l(Q)^{\theta}\frac{|Q|}{|Q_0|},
    \end{equation}
    for every cube $Q_0$ for which $Q_0\cap E\neq\emptyset$. Furthermore, a constant $C_1>0$ so that the first inequality above holds can be found for every $\theta\in\mathbb{R}$.
\end{lemma}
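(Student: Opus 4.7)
The plan is to use Lemma~\ref{lemma1} to split $Q_0\setminus E$ into the pairwise disjoint union $\bigcup_{Q\in\mathcal{D}_E(Q_0)} Q$, which has full measure in $Q_0$ since $|E|=0$, and thereby reduce the average to a sum of local integrals
\[
\fint_{Q_0}\dist(x,E)^\theta\,dx \;=\; \frac{1}{|Q_0|}\sum_{Q\in\mathcal{D}_E(Q_0)}\int_Q\dist(x,E)^\theta\,dx.
\]
It then suffices to establish the per-cube comparability
\[
c_1\, l(Q)^\theta|Q| \;\leq\; \int_Q\dist(x,E)^\theta\,dx \;\leq\; c_2\, l(Q)^\theta|Q|
\]
for every $Q\in\mathcal{D}_E(Q_0)$, with constants depending only on $n$ and $\theta$; summation and division by $|Q_0|$ then yield \eqref{eq3}.

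I would prove this per-cube estimate via two elementary geometric bounds on $\dist(\cdot,E)$ on an arbitrary $Q\in\mathcal{D}_E(Q_0)$. First, since $\pi Q$ contains some point $y\in E$, every $x\in Q$ satisfies
\[
\dist(x,E)\;\leq\;|x-y|\;\leq\;\operatorname{diam}(\pi Q)\;=\;2\sqrt{n}\,l(Q).
\]
Second, because $E\cap Q=\emptyset$ and the open interior of $\overline{Q}$ is contained in $Q$, the set $E$ avoids $\operatorname{int}(\overline{Q})$, so $\dist(x,E)\geq\dist(x,\partial\overline{Q})$ for a.e.\ $x\in Q$. In particular, on the sub-cube $Q'\subset Q$ concentric with $Q$ and of side length $l(Q)/2$, this second estimate yields $\dist(x,E)\geq l(Q)/4$.

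From these two facts both inequalities follow by splitting into the cases $\theta\geq0$ and $\theta<0$. The lower bound holds for every $\theta\in\mathbb{R}$: when $\theta\geq0$, integrating $\dist(x,E)^\theta\geq(l(Q)/4)^\theta$ over $Q'$ gives it with $c_1=4^{-\theta}2^{-n}$; when $\theta<0$, integrating the reversed inequality $\dist(x,E)^\theta\geq(2\sqrt{n}\,l(Q))^\theta$ over all of $Q$ gives it with $c_1=(2\sqrt{n})^\theta$. For the upper bound, the case $\theta\geq0$ is immediate from the first observation, while for $-1<\theta<0$ the second observation (reversed, since $\theta<0$) yields
\[
\int_Q\dist(x,E)^\theta\,dx\;\leq\;\int_Q\dist(x,\partial\overline{Q})^\theta\,dx,
\]
and the change of variables $x\mapsto l(Q)\,x$ reduces the right-hand side to $l(Q)^{n+\theta}\int_{[0,1]^n}\dist(x,\partial[0,1]^n)^\theta\,dx$.

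The one non-trivial step—and the only place where the hypothesis $\theta>-1$ is used—is verifying that this unit-cube integral is finite precisely when $\theta>-1$. This is a standard layer-cake computation based on the tubular bound $|\{x\in[0,1]^n:\dist(x,\partial[0,1]^n)<t\}|\leq 2n\,t$ for small $t$; for $\theta\leq-1$ the integrand is not locally integrable near $\partial[0,1]^n$, which is exactly why the upper bound fails in general in that regime. Once this is settled, summation completes the proof of \eqref{eq3}. The final clause of the lemma is then automatic: the lower-bound arguments above never invoked $\theta>-1$, so a constant $C_1$ can be produced for every $\theta\in\mathbb{R}$.
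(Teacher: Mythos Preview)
Your proof is correct and follows essentially the same approach as the paper: decompose $Q_0\setminus E$ via Lemma~\ref{lemma1}, then on each $Q\in\mathcal{D}_E(Q_0)$ use the two geometric bounds $\dist(x,E)\le\operatorname{diam}(\pi Q)$ (from $\pi Q\cap E\neq\emptyset$) and $\dist(x,E)\ge\dist(x,\partial Q)$ (from $Q\cap E=\emptyset$), with the finiteness of $\int_Q\dist(x,\partial Q)^\theta\,dx$ for $\theta>-1$ supplying the only nontrivial upper bound. The sole cosmetic difference is that for the $\theta\ge 0$ lower bound you restrict to a concentric half-scale subcube rather than invoking the boundary-distance integral directly; both yield the same per-cube comparability.
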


\begin{proof}
    Fix a cube $Q_0$ satisfying $Q_0\cap E\neq\emptyset$. If $\theta=0$, then $\eqref{eq1}$ holds with $C_1=C_2=1$. Suppose then that $-1<\theta<0$. Let $Q\in\mathcal{D}_E(Q_0)$. Since $Q\cap E=\emptyset$, we have that
    $$\int_Q\dist(x,E)^{\theta}dx\leq\int_Q\dist(x,\partial Q)^{\theta}dx= C_2(n,\theta)|Q|^{1+\frac{\theta}{n}}=C_2(n,\theta)l(Q)^\theta|Q|,$$
    where the hypothesis $\theta>-1$ is needed to ensure the second integral is finite. On the other hand, if $y\in Q$, because $\pi Q\cap E\neq\emptyset$, it follows that $\dist(y,E)\leq\text{diam}(\pi Q)=C(n)l(Q)$, so $\dist(y,E)^{\theta}\geq C_1(n,\theta)|Q|^{\frac{\theta}{n}}$. This shows that
    $$\int_Q\dist(x,E)^{\theta}dx\geq C_1(n,\theta)|Q|^{\frac{\theta}{n}}\int_Qdx=C_1(n,\theta)|Q|^{1+\frac{\theta}{n}}=C_1(n,\theta)l(Q)^\theta|Q|,$$
    which holds even for $\theta\leq-1$. Recalling that $|E|=0$, we can combine the previous estimates with the fact that 
    $$\fint_{Q_0}\dist(x,E)^{\theta}dx=\fint_{Q_0\setminus E}\dist(x,E)^{\theta}dx=\sum_{Q\in\mathcal{D}_E(Q_0)}|Q_0|^{-1}\int_Q\dist(x,E)^{\theta}dx$$
    to easily arrive at \eqref{eq1}. The case $\theta>0$ follows analogously.
\end{proof}

The next two results state some basic facts about distance weights, serving as a starting point to begin tackling the considered problem.

\begin{proposition}
    \label{prop1}
    Let $E$ be a non-empty subset of $\mathbb{R}^n$. Also, let $1\leq p<\infty$ and $1<q<\infty$. If there exists $\theta\in\mathbb{R}$ such that $\dist(\cdot,E)^\theta\in A_p$, then there exists another power $\widehat{\theta}$ with the same sign as $\theta$ such that $\dist(\cdot,E)^{\widehat{\theta}}\in A_q$.
\end{proposition}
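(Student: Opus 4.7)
The argument falls naturally into two cases, distinguished by whether one is forced to lower the exponent or not.

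\emph{Case $q\geq p$.} I take $\widehat{\theta}=\theta$ and rely on the classical inclusion $A_p\subseteq A_q$. When $p>1$ this is a short consequence of the power-means inequality applied to $w^{-1}$ with exponents $1/(q-1)\leq 1/(p-1)$; when $p=1$ it follows from the pointwise bound $w^{-1/(q-1)}\leq (\essinf_Q w)^{-1/(q-1)}$ combined with the $A_1$ condition. The case $p=1$ is in fact automatic here, since the hypothesis $q>1$ forces $q\geq p$.

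\emph{Case $q<p$.} Now $p>1$, and the natural candidate is
$$t:=\frac{q-1}{p-1}\in(0,1),\qquad \widehat{\theta}:=t\theta,$$
which has the same sign as $\theta$ because $t>0$. The observation that drives the argument is the identity $t/(q-1)=1/(p-1)$: this guarantees that the dual weight appearing in the $A_q$ condition for $\dist(\cdot,E)^{\widehat{\theta}}$ coincides with the dual weight already controlled by the $A_p$ condition for $w:=\dist(\cdot,E)^{\theta}$. Jensen's inequality applied to the concave map $x\mapsto x^t$ gives $\fint_Q w^t\leq(\fint_Q w)^t$, and combining this with $q-1=t(p-1)$ collapses the $A_q$ expression for $w^t$ into the $t$-th power of the $A_p$ expression for $w$, yielding $[\dist(\cdot,E)^{\widehat\theta}]_{A_q}=[w^t]_{A_q}\leq [w]_{A_p}^t$.

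The argument uses essentially nothing specific about distance weights beyond the elementary identity $\dist(\cdot,E)^{t\theta}=(\dist(\cdot,E)^{\theta})^t$; the content is purely about how $A_p$ classes behave under positive powers. The only mildly delicate step is calibrating $t$ so that the dual weights synchronise across the two classes---once this alignment is found, Jensen does the rest, and no structural property of $E$ is needed.
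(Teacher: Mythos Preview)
Your proof is correct and follows essentially the same approach as the paper: the same case split $q\geq p$ versus $q<p$, the same choice $\widehat{\theta}=\theta$ in the first case and $\widehat{\theta}=\tfrac{q-1}{p-1}\theta$ in the second, and the same resulting bound $[w^t]_{A_q}\leq[w]_{A_p}^t$. The only difference is cosmetic: the paper cites these two standard facts about $A_p$ classes from a reference, whereas you sketch their proofs via Jensen/power-means directly.
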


\begin{proof}
    If $u\in A_p$ and $1\leq p<q<\infty$, it is known (see \cite[Proposition 2.5 (a)]{DUOANDIKOETXEA}) that $u\in A_q$ with $[u]_{A_q}\leq[u]_{A_p}$. Meanwhile, in the case that $1<q<p<\infty$, \cite[Proposition 2.5 (b)]{DUOANDIKOETXEA}) tell us that $u^{\frac{q-1}{p-1}}\in A_q$ with $[u^{\frac{q-1}{p-1}}]_{A_q}\leq[u]_{A_p}^{\frac{q-1}{p-1}}$. The result then follows by taking $u=\dist(\cdot,E)^\theta$ and either $\widehat{\theta}=\theta$ or $\widehat{\theta}=\theta(q-1)(p-1)^{-1}$, depending on the case.
\end{proof}

\begin{proposition}
    \label{prop2}
    Let $E$ be a non-empty subset of $\mathbb{R}^n$ and assume that  $w(x)=\dist(x,E)^\theta$ belongs to $A_p$ for some $\theta\in\mathbb{R}$ and $1\leq p<\infty$. The following statements hold.
    \begin{itemize}
        \item[(i)] If $p=1$, then $w$ can only be a non-constant weight in the case $\theta<0$. 
        \item[(ii)] If $p>1$, then $w$ can be non-constant as long as $\theta\neq0$. Furthermore, if $\theta\neq0$ and we let $1<q<\infty$, there exist $\alpha,\beta>0$ such that $\dist(\cdot,E)^{-\alpha}$ and $\dist(\cdot,E)^\beta$ both belong to $A_q$.
    \end{itemize}
\end{proposition}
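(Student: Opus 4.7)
The plan splits cleanly across the two parts. For (i), I would directly rule out $\theta \geq 0$ via the $A_1$ inequality \eqref{eq2}. The case $\theta = 0$ gives the trivial weight $w\equiv 1$. For $\theta > 0$, fix any $e \in E$ and a cube $Q$ containing $e$ in its interior: if $|E|>0$ then $w$ vanishes on the positive-measure set $E \cap Q$, so it is not a weight at all; otherwise $|E|=0$ forces $\fint_Q w > 0$, while continuity of $\dist(\cdot,E)$ together with $\dist(e,E)=0$ gives $\essinf_Q w = 0$, contradicting $A_1$.

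The first assertion of (ii) is essentially immediate: $\theta=0$ produces $w\equiv 1$, while for $\theta\neq 0$ the weight $w=\dist(\cdot,E)^\theta$ is non-constant, since $\dist(\cdot,E)$ itself takes at least two distinct values whenever $\emptyset\neq E\neq\mathbb{R}^n$---the only range of $E$ consistent with $w$ being an $A_p$ weight.

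For the \emph{furthermore}, the idea is to combine Proposition~\ref{prop1} with the standard Muckenhoupt duality $u\in A_p \Leftrightarrow u^{-1/(p-1)}\in A_{p'}$, where $p'=p/(p-1)$. This duality converts the hypothesis $\dist(\cdot,E)^\theta \in A_p$ into $\dist(\cdot,E)^{-\theta/(p-1)} \in A_{p'}$, a distance weight whose exponent has sign opposite to that of $\theta$. I would then invoke Proposition~\ref{prop1} twice---once with source class $A_p$, once with source class $A_{p'}$, and with target class $A_q$ in both cases---to produce two exponents of opposite signs whose corresponding distance weights lie in $A_q$. One serves as $\beta>0$ and the other as $-\alpha$ with $\alpha>0$, as required. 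I do not anticipate any substantive obstacle here; the argument relies only on monotonicity and duality of the $A_p$ classes, both of which are readily available (and the latter is just the symmetry of \eqref{eq1} under the substitution $u\leftrightarrow u^{-1/(p-1)}$).
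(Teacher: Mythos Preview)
Your approach is correct and essentially matches the paper's: the substantive \emph{furthermore} in (ii) is handled by exactly the same duality-plus-Proposition~\ref{prop1} argument, and your treatment of (i) simply fills in the details where the paper writes ``easy to see.'' One minor divergence worth noting: for the first assertion in (ii) the paper reads ``$w$ can be non-trivial'' as an \emph{existence} claim and dispatches it by exhibiting the example $|x|^{-\alpha}=\dist(x,\{0\})^{-\alpha}$ for suitable $\alpha$, whereas you argue instead that the standing hypothesis $w\in A_p$ together with $\theta\neq 0$ forces $w$ to be non-constant---a different (and also true) statement arising from a different parsing of the sentence.
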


\begin{proof}
    If \eqref{eq2} holds for every cube $Q\subset\mathbb{R}^n$ and with $w=\dist(\cdot,E)^\theta$ and $\theta>0$, it is easy to see that $w=0$ a.e. and $\overline{E}=\mathbb{R}^n$. Also, if $\theta=0$, clearly $w=1\in A_p$ for every $1\leq p<\infty$. The weight $\dist(x,\{0\})^\alpha=|x|^\alpha$ belongs to $A_p$ when $-n<\alpha<n(p-1)$, so it serves as a non-constant example of a distance weight both when $p=1$ with $\theta<0$ and in the case $p>1$ with $\theta\neq0$. This proves (i) as well as the first assertion in (ii).

    In order to prove the second assertion in (ii), let us fix $1<p,q<\infty$ and $\theta\neq0$ until the end of the proof. By Proposition \ref{prop1}, there exists $\widehat{\theta}$ with same sign as $\theta$ such that $\dist(\cdot,E)^{\widehat{\theta}}\in A_q$. On the other hand, by the classical duality argument given by \cite[Theorem 2.1 (i)]{DUOANDIKOETXEA}), we have that $\dist(\cdot,E)^{\theta_0}\in A_{p'}$, where $\theta_0=\theta(1-p')$ and $p'$ is such that $\frac{1}{p}+\frac{1}{p'}=1$. Thus, applying once again Proposition \ref{prop1} with $p$ and $\theta$ replaced by $p'$ and $\theta_0$, we can assure the existence of some $\widehat{\theta}_0$ with opposite sign to $\theta$ such that $\dist(\cdot,E)^{\widehat{\theta}_0}\in A_q$. The proof concludes by taking either $\alpha=-\widehat{\theta}$ and $\beta=\widehat{\theta}_0$ if $\theta<0$ or $\beta=\widehat{\theta}$ and $\alpha=-\widehat{\theta}_0$ if $\theta>0$.
\end{proof}

By Proposition \ref{prop2}, not only we can omit the power $\theta=0$ in the search for sets admitting non-constant distance weights $\dist(\cdot,E)^\theta$, but we can also restrict the search for, say, negative values of $\theta$. Indeed, we will generally look for closed sets $E$ satisfying $\dist(\cdot,E)^{-\alpha}\in A_p$ for some $\alpha>0$, in line with previous works done for the $A_1$ case. 

\section{$t$-fractions of smallest and largest pores}
\label{sec3}

In the analysis of \(A_1\) sets, the maximal pore size plays a crucial role in their characterization, owing to its relationship with the essential infimum of the function \(\operatorname{dist}(\cdot,E)^{-1}\) within a given cube. We proceed to introduce this concept in a more precise manner.

\begin{definition}
    \label{def1.5}
    Let $E\subset\mathbb{R}^n$ be a non-empty set a let $Q_0$ be a cube. We denote by $\mathcal{M}_E(Q_0)$ a largest element of $\mathcal{D}_E(Q_0)$, that is, a dyadic subcube of $Q_0$ satisfying $l(\mathcal{M}_E(Q_0))\geq l(Q)$ for every $Q\in\mathcal{D}_E(Q_0)$.
\end{definition}

Notice that such a cube may not be unique; however, its side length and measure are. Also, we will generally drop the subscript $E$ and simply write $\mathcal{M}(Q_0)=\mathcal{M}_E(Q_0)$ when the context is clear. The weak porosity condition, to be discussed in greater detail in Section \ref{sec6}, provides quantitative information about statistics related to pore distribution such as
\begin{equation*}
    \sum_{\substack{Q\in\mathcal{D}_E(Q_0)\\l(Q)\geq L}}\frac{|Q|}{|Q_0|}
\end{equation*}
when $E$ is weakly porous and $L$ is comparable in some degree to $l(\mathcal{M}(Q_0))$ (cf. \eqref{eq8}). In fact, it is also possible to make similar estimates by replacing the inequality $l(Q)\geq L$ with $l(Q)\leq L$, as we will later verify in Section \ref{sec6}. Since the essential infimum of the involved weights no longer plays such an important role in the $A_p$ condition \eqref{eq1}, it is expected that the same statistics, when evaluated with values of $L$ comparable to $\mathcal{M}(Q)$, will no longer be as relevant. Nonetheless, it may still be useful to consider such statistics for arbitrary values of $L$. The upcoming definition is intended to facilitate the study of these quantities.

\begin{definition}
    \label{def2}
    Let $E\subset\mathbb{R}^n$ be a non-empty set. Given a cube $Q_0$ such that $Q_0\cap\overline{E}\neq\emptyset$ and fixed some $t\in(0,1)$, we define the sets 
    $$\mathscr{L}(t,Q_0,E)=\Bigg\{L\geq0:\sum_{\substack{Q\in\mathcal{D}_E(Q_0)\\l(Q)\geq L}}\frac{|Q|}{|Q_0|}\geq t\Bigg\}\hspace{0.25cm}$$
    and
    $$\mathscr{S}(t,Q_0,E)=\Bigg\{L\geq0:\sum_{\substack{Q\in\mathcal{D}_E(Q_0)\\l(Q)\leq L}}\frac{|Q|}{|Q_0|}\geq t\Bigg\}.$$
\end{definition}

\begin{remark}
    \label{remark0}
    The sets $\mathscr{L}(t,Q_0,E)$ and $\mathscr{S}(t,Q_0,E)$ are defined only for cubes $Q_0$ that intersect $\overline{E}$, as these are the relevant cases where proving \eqref{eq1} for a weight of the form $\dist(\cdot,E)^\theta$ poses a genuine challenge. Indeed, finding a constant $C > 0$ such that this inequality holds for a cube $Q_0$ that does not intersect $\overline{E}$ is rather trivial (cf. Cases I and II in the proof of Theorem \ref{teo4}).
\end{remark}

\begin{proposition}
    \label{prop3}
    Let $E$ be a closed non-empty set with $|E|=0$. If $Q_0\subset\mathbb{R}^n$ is a cube intersecting $E$, then the following statements hold.
    \begin{itemize}
        \item[(i)] For every $t\in(0,1)$, $\mathscr{L}(t,Q_0,E)=[0,L_t]$ where $0<L_t\leq l(\mathcal{M}(Q_0))$. In particular, $\mathscr{L}(t,Q_0,E)$ is a closed interval containing zero.
        \item[(ii)] For every $t\in(0,1)$, $\mathscr{S}(t,Q_0,E)=[L_t',\infty)$ where $0<L'_t\leq l(\mathcal{M}(Q_0))$. In particular, $\mathscr{S}(t,Q_0,E)$ is a half-closed interval which extends into infinity.
        \item[(iii)] If $0<s\leq t<1$, then $\mathscr{L}(t,Q_0,E)\subset\mathscr{L}(s,Q_0,E)$ and $\mathscr{S}(t,Q_0,E)\subset\mathscr{S}(s,Q_0,E)$.
    \end{itemize}
\end{proposition}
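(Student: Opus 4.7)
The plan is to rewrite each set in terms of an auxiliary distribution function on $[0,\infty)$: set
$$f(L)=\sum_{\substack{Q\in\mathcal{D}_E(Q_0)\\l(Q)\geq L}}\frac{|Q|}{|Q_0|},\qquad g(L)=\sum_{\substack{Q\in\mathcal{D}_E(Q_0)\\l(Q)\leq L}}\frac{|Q|}{|Q_0|},$$
so that $\mathscr{L}(t,Q_0,E)=\{L\geq 0:f(L)\geq t\}$ and $\mathscr{S}(t,Q_0,E)=\{L\geq 0:g(L)\geq t\}$. Statement~(iii) will then be immediate, since the defining inequalities $f(L)\geq t$ and $g(L)\geq t$ are weaker for smaller~$t$. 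The remaining work consists in identifying the precise form of these super-level sets by studying the monotonicity and one-sided continuity of $f$ and~$g$. Throughout, the key input will be the conservation of mass $\sum_{Q\in\mathcal{D}_E(Q_0)}|Q|=|Q_0|$ furnished by Lemma~\ref{lemma1} (which requires $|E|=0$).

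For~(i), $f$ is non-increasing, so its super-level set at height $t$ has the form $[0,L_t]$ or $[0,L_t)$. The side lengths appearing in the sum lie in the discrete set $\{2^{-j}l(Q_0)\}_{j\geq 0}$, and at any such value $l_0$ the cubes with $l(Q)=l_0$ are still counted in $f(l_0)$; this makes $f$ left-continuous, so its super-level set is closed and $\mathscr{L}(t,Q_0,E)=[0,L_t]$. To see $L_t>0$, I will note that only finitely many cubes have side length $\geq\delta$ for any fixed $\delta>0$, so Lemma~\ref{lemma1} yields $f(L)\to 1$ as $L\to 0^+$, and hence $f(L)\geq t$ for all sufficiently small~$L$. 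The bound $L_t\leq l(\mathcal{M}(Q_0))$ is immediate, since $f(L)=0$ for $L>l(\mathcal{M}(Q_0))$.

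Part~(ii) follows a dual pattern. The function $g$ is non-decreasing, and the same discreteness argument shows $g$ is right-continuous, yielding $\mathscr{S}(t,Q_0,E)=[L_t',\infty)$. The upper bound $L_t'\leq l(\mathcal{M}(Q_0))$ holds because every $Q\in\mathcal{D}_E(Q_0)$ satisfies $l(Q)\leq l(\mathcal{M}(Q_0))$, forcing $g(l(\mathcal{M}(Q_0)))=1\geq t$. The one step demanding slightly more care, and which I would flag as the main (if mild) obstacle, is the strict positivity $L_t'>0$: there are infinitely many cubes in $\mathcal{D}_E(Q_0)$ of arbitrarily small side length, and one must rule out a priori that they accumulate a fraction $\geq t$ of $Q_0$ arbitrarily close to $L=0$. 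To handle this I will bound $1-g(L)$ from below by the finite subsum $\sum_{l(Q)\geq\delta}|Q|/|Q_0|$ for any $\delta>L$, which by Lemma~\ref{lemma1} approaches $1$ as $\delta\to 0^+$; hence $g(L)\to 0$ as $L\to 0^+$, and $L_t'>0$ follows.
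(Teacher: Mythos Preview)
Your proposal is correct and follows essentially the same approach as the paper. Both arguments exploit the monotonicity of the relevant sums, the discreteness of the set of dyadic side lengths, and the total mass identity $\sum_{Q\in\mathcal{D}_E(Q_0)}|Q|=|Q_0|$ from Lemma~\ref{lemma1}; your packaging in terms of the left-continuity of $f$ and right-continuity of $g$ is a clean abstraction of the paper's more explicit computation, which instead pins down $L_t$ and $L_t'$ as specific dyadic values $2^{-k_0}l(Q_0)$ and $2^{-k_0}l(\mathcal{M}(Q_0))$.
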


\begin{proof}
    In order to prove (i), start by noticing that, since $$\sum_{l(Q)\geq0}\frac{|Q|}{|Q_0|}=1,$$
    $0\in\mathscr{L}(t,Q_0,E)$ for every $t\in(0,1)$. Furthermore, if $L\in\mathscr{L}(t,Q_0,E)$ and $0\leq L^*\leq L$, then $$\sum_{l(Q)\geq L^*}\frac{|Q|}{|Q_0|}\geq\sum_{l(Q)\geq L}\frac{|Q|}{|Q_0|}\geq t,$$
    meaning $L^*\in\mathscr{L}(t,Q_0,E)$, which proves that $\mathscr{L}(t,Q_0,E)$ is an interval. Now, let $L_t:=\sup\mathscr{L}(t,Q_0,E)$. Let us verify that $L_t>0$. For that purpose, consider a sequence $\{l_n\}_{n\in\mathbb{N}}$ of positive numbers such that $l_n\searrow0$ as $n\to\infty$. Notice that, for $\mathcal{Q}_n:=\{Q\in\mathcal{D}_E(Q_0):l(Q)\geq l_n\}$, we have that 
    $$\bigcup_{n\in\mathbb{N}}\mathcal{Q}_n=\{Q\in\mathcal{D}_E(Q_0):l(Q)>0\}=\mathcal{D}_E(Q_0).$$
    Therefore, recalling that $|E|=0$, we can apply Lemma \ref{lemma1} to get
    \begin{align*}
        |Q_0|=|Q_0\setminus E|=\Big|\bigcup_{Q\in\bigcup_{n\in\mathbb{N}}\mathcal{Q}_n}Q\Big|=\lim_{n\to\infty}\Big|\bigcup_{Q\in\mathcal{Q}_n}Q\Big|=\lim_{n\to\infty}\Big|\bigcup_{l(Q)\geq l_n}Q\Big|.
    \end{align*}
    This means that, for any fixed $t\in(0,1)$, we can pick $n$ sufficiently big as to make $$\sum_{l(Q)\geq l_n}\frac{|Q|}{|Q_0|}=|Q_0|^{-1}\Big|\bigcup_{l(Q)\geq l_n}Q\Big|\geq t,$$
    thus $L_t\geq l_n>0$. To see that $\mathscr{L}(t,Q_0,E)$ is closed, it suffices to check that $L_t\in\mathscr{L}(t,Q_0,E)$. To this end, observe that $$\mathscr{L}(t,Q_0,E)\cap\{2^{-k}l(Q_0)\}_{k\in\mathbb{N}_0}=\{2^{-k}l(Q_0)\}_{k\geq k_0},$$
    for some nonnegative integer $k_0$. Actually, notice that $k_0>0$, since the fact that $Q_0\cap E\neq\emptyset$ implies $l(Q)\leq\frac{1}{2}l(Q_0)$ for every $Q\in\mathcal{D}_E(Q_0)$. We claim that $2^{-k_0}l(Q_0)=L_t$. Indeed, suppose that $2^{-k_0}l(Q_0)<L\in\mathscr{L}(t,Q_0,E)$, for some $L>0$. Then, $L<2^{-k_0+1}l(Q_0)$, otherwise that would imply that $2^{-k_0+1}l(Q_0)\in\mathscr{L}(t,Q_0,E)$, contradicting the hypothesis on $k_0$. But since there are no dyadic cubes with side lengths strictly in between $2^{-k_0}l(Q_0)$ and $2^{-k_0+1}l(Q_0)$,
    $$\sum_{l(Q)\geq 2^{-k_0+1}l(Q_0)}\frac{|Q|}{|Q_0|}=\sum_{l(Q)\geq L}\frac{|Q|}{|Q_0|}\geq t,$$
    implies that $2^{-k_0+1}l(Q_0)\in\mathscr{L}(t,Q_0,E)$, which, once again, is a contradiction. Thus, $2^{-k_0}l(Q_0)=\max\mathscr{L}(t,Q_0,E)=L_t$ and $\mathscr{L}(t,Q_0,E)$ is closed. Finally, notice that $L_t\leq l(\mathcal{M}(Q_0))+\varepsilon$ for all $\varepsilon>0$ since $$\sum_{l(Q)\geq l(\mathcal{M}(Q_0))+\varepsilon}\frac{|Q|}{|Q_0|}=0,$$
    so $L_t\leq l(\mathcal{M}(Q_0))$.

    The proof of (ii) can be obtained through similar arguments. Since
    $$\sum_{l(Q)\leq l(\mathcal{M}(Q_0))}\frac{|Q|}{|Q_0|}=1,$$
    $l(\mathcal{M}(Q_0))\in\mathscr{S}(t,Q_0,E)$ for every $t\in(0,1)$. Furthermore, if $L\in\mathscr{S}(t,Q_0,E)$ and $L^*\geq L$, then
    $$\sum_{l(Q)\leq L^*}\frac{|Q|}{|Q_0|}\geq\sum_{l(Q)\leq L}\frac{|Q|}{|Q_0|}\geq t,$$
    meaning $L^*\in\mathscr{S}(t,Q_0,E)$, which proves that $\mathscr{S}(t,Q_0,E)$ is an interval containing $[l(\mathcal{M}(Q_0)),\infty)$. Now, to see that $L'_t:=\inf\mathscr{S}(t,Q_0,E)>0$ for any fixed $t\in(0,1)$, note that the fact that $\sum_{l(Q)\leq l(\mathcal{M}(Q_0))}\frac{|Q|}{|Q_0|}=1$ implies that $\sum_{l(Q)\leq L}\frac{|Q|}{|Q_0|}\to0$ as $L\to0^+$, so we can pick $L>0$ sufficiently small as to make $\sum_{l(Q)\leq L}\frac{|Q|}{|Q_0|}<t$ and thus $L'_t>L>0$. To see that $\mathscr{S}(t,Q_0,E)$ contains its lower endpoint $L'_t$, we start by observing that
    $$\mathscr{S}(t,Q_0,E)\cap\{2^{-k}l(\mathcal{M}(Q_0))\}_{k\in\mathbb{N}_0}=\{2^{-k}l(\mathcal{M}(Q_0))\}_{k\leq k_0}$$ for some nonnegative integer $k_0$. We claim that $2^{-k_0}l(\mathcal{M}(Q_0))=L'_t$. Indeed, suppose that $2^{-k_0}l(\mathcal{M}(Q_0))>L\in\mathscr{S}(t,Q_0,E)$, for some $L>0$.  Then, $L>2^{-k_0-1}l(\mathcal{M}(Q_0))$, otherwise that would imply that $2^{-k_0-1}l(\mathcal{M}(Q_0))\in\mathscr{S}(t,Q_0,E)$, contradicting the hypothesis on $k_0$. Since there are no dyadic cubes with side lengths contained in the interval $(2^{-k_0-1}l(\mathcal{M}(Q_0)),2^{-k_0}l(\mathcal{M}(Q_0)))$, we have
    $$\sum_{l(Q)\leq 2^{-k_0-1}l(\mathcal{M}(Q_0))}\frac{|Q|}{|Q_0|}=\sum_{l(Q)\leq L}\frac{|Q|}{|Q_0|}\geq t$$
    implying $2^{-k_0-1}l(\mathcal{M}(Q_0))\in\mathscr{S}(t,Q_0,E)$, which, once again, is a contradiction. Thus, $L'_t=2^{-k_0}l(\mathcal{M}(Q_0))=\min\mathscr{S}(t,Q_0,E)$. 

    Finally, to prove (iii) simply observe that if $s\leq t$ and $L\in\mathscr{L}(t,Q_0,E)$, then $\sum_{l(Q)\geq L}\frac{|Q|}{|Q_0|}\geq t\geq s$, so $L\in\mathscr{L}(s,Q_0,E)$, and if $L\in\mathscr{S}(t,Q_0,E)$ then $\sum_{l(Q)\leq L}\frac{|Q|}{|Q_0|}\geq t\geq s$, implying $L\in\mathscr{S}(s,Q_0,E)$.
\end{proof}

\begin{definition}
    \label{def3}
    Fixed a non-empty set $E\subset\mathbb{R}^n$ with $|\overline{E}|=0$ and a cube $Q$ intersecting $\overline{E}$, we introduce the functions $\mathcal{L}_Q:(0,1)\to(0,l(\mathcal{M}(Q))]$ and $\mathcal{S}_Q:(0,1)\to(0,l(\mathcal{M}(Q))]$ given by
    $$\mathcal{L}_Q(t):=\max\mathscr{L}(t,Q,E)$$
    and
    $$\mathcal{S}_Q(t):=\min\mathscr{S}(t,Q,E).$$
\end{definition}

\begin{remark}
    Not only are the previous functions well-defined by Proposition~\ref{prop3}, but we also know that $\mathcal{L}_Q$ is non-increasing while $\mathcal{S}_Q$ is non-decreasing. Moreover, these functions can only take values of the form $2^{-k} l(\mathcal{M}(Q))$, although we will not usually make use of this fact.
\end{remark}

Intuitively, the function \( \mathcal{L}_Q \) (respectively, \( \mathcal{S}_Q \)) evaluated at \( t \) returns the smallest (largest) side length among those associated with the largest (smallest) pores that together make up a fraction \( t \) of the total volume of \( Q \setminus \overline{E} \). These functions will be essential for the characterization of $A_p$ sets in the following sections. But before that, we proceed to prove some basic properties regarding $\mathcal{L}_Q$ and $\mathcal{S}_Q$ that will be needed later for this purpose.

\begin{proposition}
    \label{prop4}
    Let $E$ be a closed non-empty set with $|E|=0$. If $Q_0\subset\mathbb{R}^n$ is a cube intersecting $E$, then for every $0<t<1$,
    \begin{equation}
        \label{eq4}
        \sum_{l(Q)\geq\mathcal{L}_{Q_0}(t)}\frac{|Q|}{|Q_0|}\geq t\hspace{0.5cm}\text{and}\hspace{0.5cm}\sum_{l(Q)\leq\mathcal{S}_{Q_0}(t)}\frac{|Q|}{|Q_0|}\geq t
    \end{equation}
    while
    \begin{equation}
        \label{eq5}
        \sum_{l(Q)<\mathcal{L}_{Q_0}(t)}\frac{|Q|}{|Q_0|}\leq1-t\hspace{0.5cm}\text{and}\hspace{0.5cm}\sum_{l(Q)>\mathcal{S}_{Q_0}(t)}\frac{|Q|}{|Q_0|}\leq1-t.
    \end{equation}
\end{proposition}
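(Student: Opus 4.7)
The plan is to deduce all four inequalities directly from the characterizations of $\mathscr{L}(t,Q_0,E)$ and $\mathscr{S}(t,Q_0,E)$ given by Proposition \ref{prop3} together with the fact, established in Lemma \ref{lemma1}, that the cubes in $\mathcal{D}_E(Q_0)$ form a partition (up to the null set $E$) of $Q_0$, so that
\[
\sum_{Q\in\mathcal{D}_E(Q_0)}\frac{|Q|}{|Q_0|}=1.
\]

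For the two inequalities in \eqref{eq4}, I would simply observe that by Proposition \ref{prop3} (i)–(ii) we have $\mathscr{L}(t,Q_0,E)=[0,L_t]$ with maximum $L_t=\mathcal{L}_{Q_0}(t)$, and $\mathscr{S}(t,Q_0,E)=[L_t',\infty)$ with minimum $L_t'=\mathcal{S}_{Q_0}(t)$. Since these extreme values belong to their respective sets, the defining inequalities from Definition \ref{def2} apply at $L=\mathcal{L}_{Q_0}(t)$ and $L=\mathcal{S}_{Q_0}(t)$, which is exactly what \eqref{eq4} asserts.

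For the two inequalities in \eqref{eq5}, I would split the total sum as
\[
1=\sum_{l(Q)\geq\mathcal{L}_{Q_0}(t)}\frac{|Q|}{|Q_0|}+\sum_{l(Q)<\mathcal{L}_{Q_0}(t)}\frac{|Q|}{|Q_0|}
\]
and use the first half of \eqref{eq4} to bound the first summand from below by $t$, which yields the bound $1-t$ on the second summand. The analogous decomposition
\[
1=\sum_{l(Q)\leq\mathcal{S}_{Q_0}(t)}\frac{|Q|}{|Q_0|}+\sum_{l(Q)>\mathcal{S}_{Q_0}(t)}\frac{|Q|}{|Q_0|}
\]
together with the second half of \eqref{eq4} gives the remaining estimate.

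There is no real obstacle here; the only point that deserves a brief remark is that the sums appearing in \eqref{eq5} are over strict inequalities, so one must be careful to pair each of them with the corresponding non-strict sum to partition $\mathcal{D}_E(Q_0)$ cleanly. Once the partition identity and Proposition \ref{prop3} are invoked, the result is immediate.
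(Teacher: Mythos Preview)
Your proposal is correct and matches the paper's own proof essentially line for line: the paper also derives \eqref{eq4} from the membership $\mathcal{L}_{Q_0}(t)\in\mathscr{L}(t,Q_0,E)$ and $\mathcal{S}_{Q_0}(t)\in\mathscr{S}(t,Q_0,E)$, and then obtains \eqref{eq5} by the same complementary decomposition of the total sum equal to $1$.
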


\begin{proof}
    The two inequalities in \eqref{eq4} trivially follow from the fact that $\mathcal{L}_{Q_0}(t)\in\mathscr{L}(t,Q_0,E)$ and $\mathcal{S}_{Q_0}(t)\in\mathscr{S}(t,Q_0,E)$. The ones in \eqref{eq5} involve measures of complementary sets to ones in \eqref{eq4}, so simply observe that
    $$\sum_{l(Q)<\mathcal{L}_{Q_0}(t)}\frac{|Q|}{|Q_0|}=1-\sum_{l(Q)\geq\mathcal{L}_{Q_0}(t)}\frac{|Q|}{|Q_0|}\leq1-t$$
    and
    $$\sum_{l(Q)>\mathcal{S}_{Q_0}(t)}\frac{|Q|}{|Q_0|}=1-\sum_{l(Q)\leq\mathcal{S}_{Q_0}(t)}\frac{|Q|}{|Q_0|}\leq1-t.$$
\end{proof}

\begin{lemma}
    \label{lemma4}
    Given a non-empty closed set $E$ with $|E|=0$ and some cube $Q_0$ such that $Q_0\cap E\neq\emptyset$, then
    \begin{itemize}
        \item[(i)] $\mathcal{L}_{Q_0}(t)\in\mathscr{S}(1-t,Q_0,E)$ for every $t\in(0,1)$. In particular, $\mathcal{L}_{Q_0}(t)\geq\mathcal{S}_{Q_0}(1-t)$ for $0<t<1$;
        \item[(ii)] $\mathcal{L}_{Q_0}$ and $\mathcal{S}_{Q_0}$ have the limit behaviors
        $$\lim_{t\to1^-}\mathcal{L}_{Q_0}(t)=\lim_{t\to0^+}\mathcal{S}_{Q_0}(t)=0$$
        and
        $$\lim_{t\to0^+}\mathcal{L}_{Q_0}(t)=\lim_{t\to1^-}\mathcal{S}_{Q_0}(t)=l(\mathcal{M}(Q_0)).$$
    \end{itemize}
\end{lemma}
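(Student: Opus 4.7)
For part (i), my plan is to leverage the maximality of $\mathcal{L}_{Q_0}(t)$ in $\mathscr{L}(t, Q_0, E)$ together with the dyadic-gap structure of admissible side lengths. The dyadic subcubes of $Q_0$ have side lengths only in $\{2^{-j}l(Q_0)\}_{j\geq 0}$, so the smallest admissible length strictly above $\mathcal{L}_{Q_0}(t)$ is $L^{*}=2\mathcal{L}_{Q_0}(t)$; by maximality, $L^{*}\notin\mathscr{L}(t,Q_0,E)$, and the gap then converts this into $\sum_{l(Q)>\mathcal{L}_{Q_0}(t)}|Q|/|Q_0|<t$. Using that by Lemma~\ref{lemma1} the family $\mathcal{D}_E(Q_0)$ is a disjoint cover of $Q_0$ up to the null set $E$ (so the total mass equals $1$), the complementary sum satisfies $\sum_{l(Q)\leq\mathcal{L}_{Q_0}(t)}|Q|/|Q_0|>1-t$, placing $\mathcal{L}_{Q_0}(t)$ in $\mathscr{S}(1-t,Q_0,E)$. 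The comparison $\mathcal{L}_{Q_0}(t)\geq\mathcal{S}_{Q_0}(1-t)$ is then immediate from $\mathcal{S}_{Q_0}(1-t)=\min\mathscr{S}(1-t,Q_0,E)$.

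For (ii), monotonicity of $\mathcal{L}_{Q_0}$ and $\mathcal{S}_{Q_0}$ guarantees that all four limits exist, so the task reduces to computing each one. The two limits equal to $l(\mathcal{M}(Q_0))$ hinge on the positive quantity $t_0:=\sum_{l(Q)=l(\mathcal{M}(Q_0))}|Q|/|Q_0|$, which is nonzero because the maximal cube $\mathcal{M}(Q_0)$ exists: for $0<t\leq t_0$ one has $l(\mathcal{M}(Q_0))\in\mathscr{L}(t,Q_0,E)$, and combined with the a~priori bound from Proposition~\ref{prop3} this forces $\mathcal{L}_{Q_0}(t)=l(\mathcal{M}(Q_0))$; while for any $\epsilon>0$ and $t>1-t_0$ one checks that $l(\mathcal{M}(Q_0))-\epsilon\notin\mathscr{S}(t,Q_0,E)$, giving $\mathcal{S}_{Q_0}(t)>l(\mathcal{M}(Q_0))-\epsilon$. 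The two vanishing limits instead rely on the infiniteness of $\mathcal{D}_E(Q_0)$ (Lemma~\ref{lemma1}): only finitely many cubes satisfy $l(Q)\geq\epsilon$, so $\sigma_\epsilon:=\sum_{l(Q)\geq\epsilon}|Q|/|Q_0|<1$ and any $t\in(\sigma_\epsilon,1)$ forces $\mathcal{L}_{Q_0}(t)<\epsilon$; symmetrically, countable additivity applied to the decreasing subfamilies $\{Q\in\mathcal{D}_E(Q_0):l(Q)\leq L_k\}$ along any $L_k\searrow 0$ (whose intersection is empty since every cube has positive side length) shows that $\sum_{l(Q)\leq L}|Q|/|Q_0|\to 0$ as $L\to 0^{+}$, from which $\mathcal{S}_{Q_0}(t)<\epsilon$ for small $t$ follows routinely.

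I do not anticipate a serious conceptual obstacle; the whole argument is careful bookkeeping with the definitions of $\mathscr{L}$, $\mathscr{S}$, $\mathcal{L}_{Q_0}$, and $\mathcal{S}_{Q_0}$, together with the two structural facts already proved: that $\mathcal{D}_E(Q_0)$ is infinite and disjointly tiles $Q_0$ up to a null set, and that it admits a (possibly non-unique) largest cube $\mathcal{M}(Q_0)$. The one subtle point is the dyadic gap invoked in (i) — that no dyadic side length lies in the open interval $(\mathcal{L}_{Q_0}(t),2\mathcal{L}_{Q_0}(t))$ — which is precisely what allows maximality to be upgraded to a quantitative bound on the mass of cubes with $l(Q)>\mathcal{L}_{Q_0}(t)$, and thereby bridges $\mathscr{L}$ to $\mathscr{S}$.
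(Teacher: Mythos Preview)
Your proof is correct and follows essentially the same approach as the paper, with only minor cosmetic differences: for (i) the paper reaches $\sum_{l(Q)>\mathcal{L}_{Q_0}(t)}|Q|/|Q_0|\leq t$ via the limit $\varepsilon\to 0^{+}$ in $\sum_{l(Q)\geq\mathcal{L}_{Q_0}(t)+\varepsilon}|Q|$ rather than invoking the dyadic gap, and for (ii) it proves only $\lim_{t\to1^{-}}\mathcal{L}_{Q_0}(t)=0$ and $\lim_{t\to1^{-}}\mathcal{S}_{Q_0}(t)=l(\mathcal{M}(Q_0))$ directly and then reads off the other two limits from the inequality $\mathcal{L}_{Q_0}(t)\geq\mathcal{S}_{Q_0}(1-t)$ established in part (i). Your direct computation of all four limits is equally valid, just slightly less economical.
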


\begin{proof}
    Fix a cube $Q_0$ as stated and take $t\in(0,1)$. We have that
    \begin{align*}
        \Big| \bigcup_{l(Q)>\mathcal{L}_{Q_0}(t)} Q\Big| = \Big| \bigcup_{\varepsilon>0} \Big( \bigcup_{l(Q)\geq\mathcal{L}_{Q_0}(t)+\varepsilon}Q \Big)\Big|=\lim_{\varepsilon\to0^+}\sum_{l(Q)\geq\mathcal{L}_{Q_0}(t)+\varepsilon}|Q|\leq t|Q_0|,
    \end{align*}
    where the last inequality holds because $\mathcal{L}_{Q_0}(t)=\max\mathscr{L}(t,Q_0,E)$. It follows that
    \begin{align*}
        \sum_{l(Q)\leq\mathcal{L}_{Q_0}(t)}\frac{|Q|}{|Q_0|}=1-\sum_{l(Q)>\mathcal{L}_{Q_0}(t)}\frac{|Q|}{|Q_0|}\geq1-t,
    \end{align*}
    thus $\mathcal{L}_{Q_0}(t)\in\mathscr{S}(1-t,Q_0,E)$. This proves (i). On the other hand, since $\mathcal{L}_{Q_0}$ is non-increasing in $(0,1)$, $\lim_{t\to1^-}\mathcal{L}_{Q_0}(t)=L^*$ for some $L^*\geq0$. This means that $\sum_{l(Q)\geq L^*}\frac{|Q|}{|Q_0|}\geq t$, for every $0<t<1$, implying $\sum_{l(Q)\geq L^*}\frac{|Q|}{|Q_0|}=1$. If we had $L^*>0$, since $\mathcal{D}_E(Q_0)$ is an infinite family, we could pick $Q_1\in\mathcal{D}_E(Q_0)$ such that $l(Q_1)<L^*$ and, in consequence,
    \begin{align*}
        \sum_{l(Q)\geq L^*}\frac{|Q|}{|Q_0|}=1-\sum_{l(Q)<L^*}\frac{|Q|}{|Q_0|}\leq1-\frac{|Q_1|}{|Q_0|}<1.
    \end{align*}
    Therefore $L^*=0$ and, by item (i),
    $$\lim_{t\to0^+}\mathcal{S}_{Q_0}(t)\leq\lim_{t\to0^+}\mathcal{L}_{Q_0}(1-t)=\lim_{t\to1^-}\mathcal{L}_{Q_0}(t)=0.$$
    Next, if we let $s:=\sum_{l(Q)\leq\frac{1}{2}l(\mathcal{M}(Q_0))}\frac{|Q|}{|Q_0|}$, then $s\in(0,1)$ and $\mathcal{S}_{Q_0}(t)=l(\mathcal{M}(Q_0))$ for every $s<t<1$. Finally, using (i) again we get
    $$l(\mathcal{M}(Q_0))\geq\lim_{t\to0^+}\mathcal{L}_{Q_0}(t)\geq\lim_{t\to0^+}\mathcal{S}_{Q_0}(1-t)=\lim_{t\to1^-}\mathcal{S}_{Q_0}(t)=l(\mathcal{M}(Q_0)).$$
\end{proof}

\section{First characterization of $A_p$ sets}
\label{sec4}

We now proceed to propose a first characterization of sets \( E \) for which there exists \( \alpha > 0 \) such that \( \dist(\cdot, E)^{-\alpha} \in A_p \), for \( 1 < p < \infty \). The two implications that constitute the equivalence to be proved are stated separately in Theorems~\ref{teo3} and~\ref{teo4}, and involve the functions \( \mathcal{L}_{Q_0} \) and \( \mathcal{S}_{Q_0} \), illustrating how effective these are in capturing the nature of \( A_p \) sets.

\begin{theorem}
    \label{teo3}
    Let $E$ be a closed non-empty set and suppose there exists $\alpha>0$ such that $\dist(\cdot,E)^{-\alpha}\in A_p$ for some $1<p<\infty$. Then, there exist positive constants $\sigma=\sigma(\alpha,p)$ and $C_0=C_0(n,\alpha,p,[\dist(\cdot,E)^{-\alpha}]_{A_p})$ such that
    \begin{equation}
    \label{eq7}
        \mathcal{L}_{Q_0}(t)\leq\frac{C_0}{t^\sigma}\mathcal{S}_{Q_0}(t)
    \end{equation}
    for every cube $Q_0$ intersecting $E$ and every $t\in(0,1)$.
\end{theorem}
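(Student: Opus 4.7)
The plan is to turn the $A_p$ condition for $w=\dist(\cdot,E)^{-\alpha}$ into a single inequality involving dyadic sums over $\mathcal{D}_E(Q_0)$, and then to localize both sums to the cubes that realize the definitions of $\mathcal{L}_{Q_0}(t)$ and $\mathcal{S}_{Q_0}(t)$. Concretely, since $[\dist(\cdot,E)^{-\alpha}]_{A_p}<\infty$, there is a constant $C$ such that for every cube $Q_0$,
\[
\Big(\fint_{Q_0}\dist(x,E)^{-\alpha}\,dx\Big)\Big(\fint_{Q_0}\dist(x,E)^{\alpha/(p-1)}\,dx\Big)^{p-1}\le C.
\]
Applying the lower bound in Lemma~\ref{lemma2} to each factor—valid for arbitrary $\theta\in\mathbb{R}$, and in particular for $\theta=-\alpha$ and $\theta=\alpha/(p-1)$—produces a constant $C_1=C_1(n,\alpha,p)>0$ with
\[
\Big(\sum_{Q\in\mathcal{D}_E(Q_0)}l(Q)^{-\alpha}\tfrac{|Q|}{|Q_0|}\Big)\Big(\sum_{Q\in\mathcal{D}_E(Q_0)}l(Q)^{\alpha/(p-1)}\tfrac{|Q|}{|Q_0|}\Big)^{p-1}\le \frac{C}{C_1^{p}}.
\]

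Now I would exploit the one-sided monotonicity of the powers $l(Q)^{-\alpha}$ and $l(Q)^{\alpha/(p-1)}$ with respect to the partitions determined by $\mathcal{S}_{Q_0}(t)$ and $\mathcal{L}_{Q_0}(t)$. Restricting the first sum to those $Q$ with $l(Q)\le\mathcal{S}_{Q_0}(t)$, where the decreasing function $l\mapsto l^{-\alpha}$ is at least $\mathcal{S}_{Q_0}(t)^{-\alpha}$, and using Proposition~\ref{prop4} gives
\[
\sum_{Q\in\mathcal{D}_E(Q_0)}l(Q)^{-\alpha}\tfrac{|Q|}{|Q_0|}\ge \mathcal{S}_{Q_0}(t)^{-\alpha}\sum_{l(Q)\le\mathcal{S}_{Q_0}(t)}\tfrac{|Q|}{|Q_0|}\ge t\,\mathcal{S}_{Q_0}(t)^{-\alpha}.
\]
Symmetrically, restricting the second sum to those $Q$ with $l(Q)\ge\mathcal{L}_{Q_0}(t)$, on which the increasing function $l\mapsto l^{\alpha/(p-1)}$ is at least $\mathcal{L}_{Q_0}(t)^{\alpha/(p-1)}$, yields
\[
\sum_{Q\in\mathcal{D}_E(Q_0)}l(Q)^{\alpha/(p-1)}\tfrac{|Q|}{|Q_0|}\ge t\,\mathcal{L}_{Q_0}(t)^{\alpha/(p-1)}.
\]

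Multiplying the first inequality by the $(p-1)$-th power of the second and inserting the result into the preceding $A_p$ bound, the factors of $t$ combine to $t^{p}$, while the powers of $\mathcal{L}_{Q_0}(t)$ and $\mathcal{S}_{Q_0}(t)$ collapse to $(\mathcal{L}_{Q_0}(t)/\mathcal{S}_{Q_0}(t))^{\alpha}$. Solving for this ratio gives
\[
\frac{\mathcal{L}_{Q_0}(t)}{\mathcal{S}_{Q_0}(t)}\le \Big(\frac{C}{C_1^{p}}\Big)^{1/\alpha}t^{-p/\alpha},
\]
which is exactly \eqref{eq7} with $\sigma=p/\alpha$ and $C_0=(C/C_1^{p})^{1/\alpha}$. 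The only delicate point is checking that Lemma~\ref{lemma2} supplies a lower bound for both exponents simultaneously regardless of the size of $\alpha$—this is guaranteed by the last sentence of that lemma, so no case analysis in $\alpha$ is needed. Beyond that, the argument is essentially a dimensionally correct bookkeeping of the $A_p$ product, and I do not anticipate a real obstacle.
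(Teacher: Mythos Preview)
Your proposal is correct and follows essentially the same approach as the paper: use Lemma~\ref{lemma2} to pass to dyadic sums, restrict each sum to the cubes singled out by $\mathcal{S}_{Q_0}(t)$ and $\mathcal{L}_{Q_0}(t)$, apply Proposition~\ref{prop4}, and solve for the ratio to obtain $\sigma=p/\alpha$. The only small omission is that Lemma~\ref{lemma2} requires $|E|=0$; the paper notes this follows because $\dist(\cdot,E)^{-\alpha}\in A_p$ forces local integrability, hence the set where the weight is infinite must be null.
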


\begin{proof}
    If $w(x)=\dist(x,E)^{-\alpha}$, since $w\in A_p$, in particular $w$ is locally integrable. But $w(x)=\infty$ for every $x\in E$, so we must have $|E|=0$, thus the functions $\mathcal{L}_{Q_0}$ and $\mathcal{S}_{Q_0}$ are well-defined. Let $t\in(0,1)$. Making use of the $A_p$ condition of $w$ over $Q_0$ as well as Lemma \ref{lemma2} and the properties of $\mathcal{L}_{Q_0}$ and $\mathcal{S}_{Q_0}$,
    \begin{align*}
        [\dist(\cdot,E)^{-\alpha}]_{A_p}&\geq\Big(\fint_{Q_0}\dist(x,E)^{-\alpha}dx\Big)\Big(\fint_{Q_0}\dist(x,E)^{\frac{\alpha}{p-1}}dx\Big)^{p-1} \\
        &\geq C(n,\alpha,p)\Big(\sum_{Q\in\mathcal{D}_E(Q_0)}l(Q)^{-\alpha}\frac{|Q|}{|Q_0|}\Big)\Big(\sum_{Q\in\mathcal{D}_E(Q_0)}l(Q)^{\frac{\alpha}{p-1}}\frac{|Q|}{|Q_0|}\Big)^{p-1}
        \\
        &\geq C(n,\alpha,p)\Bigg(\sum_{\substack{Q\in\mathcal{D}_E(Q_0)\\l(Q)\leq \mathcal{S}_{Q_0}(t)}}l(Q)^{-\alpha}\frac{|Q|}{|Q_0|}\Bigg)\Bigg(\sum_{\substack{Q\in\mathcal{D}_E(Q_0)\\l(Q)\geq \mathcal{L}_{Q_0}(t)}}l(Q)^{\frac{\alpha}{p-1}}\frac{|Q|}{|Q_0|}\Bigg)^{p-1}
        \\
        &\geq C(n,\alpha,p)\mathcal{S}_{Q_0}(t)^{-\alpha}\Bigg(\sum_{\substack{Q\in\mathcal{D}_E(Q_0)\\l(Q)\leq\mathcal{S}_{Q_0}(t)}}\frac{|Q|}{|Q_0|}\Bigg)\mathcal{L}_{Q_0}(t)^\alpha\Bigg(\sum_{\substack{Q\in\mathcal{D}_E(Q_0)\\l(Q)\geq \mathcal{L}_{Q_0}(t)}}\frac{|Q|}{|Q_0|}\Bigg)^{p-1}
        \\
        &\geq C(n,\alpha,p)t^{p}\mathcal{S}_{Q_0}(t)^{-\alpha}\mathcal{L}_{Q_0}(t)^\alpha,
    \end{align*}
    where the last inequality is a consequence of \eqref{eq4}. From this, we can easily get \eqref{eq7} with $\sigma=\frac{p}{\alpha}$ and $C_0$ depending on $n,\alpha,p$ and $[\dist(\cdot,E)^{-\alpha}]_{A_p}$.
\end{proof}

The converse to the above statement is given by the next result.

\begin{theorem}
    \label{teo4}
    Let $E$ be a closed non-empty set with $|E|=0$ such that there exist constants $\sigma,C_0>0$ satisfying
    \begin{equation*}
        \mathcal{L}_{Q_0}(t)\leq\frac{C_0}{t^\sigma}\mathcal{S}_{Q_0}(t)
    \end{equation*}
    for every cube $Q_0$ intersecting $E$ and every $t\in(0,1)$.
    Then, for every $1<p<\infty$ we have that $\dist(\cdot,E)^{-\alpha}\in A_p$ whenever $0<\alpha<\min\{\sigma^{-1},(p-1)\sigma^{-1}\}$.
\end{theorem}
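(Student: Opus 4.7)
The plan is to verify the $A_p$ characteristic \eqref{eq1} for $w := \dist(\cdot,E)^{-\alpha}$ with a constant depending only on $n,p,\alpha,\sigma,C_0$. Cubes $Q$ disjoint from $\overline{E}$ admit a standard reduction: either $\dist(\cdot,E)$ has controlled oscillation on $Q$, in which case \eqref{eq1} is immediate, or $Q$ can be enlarged by a dimensional factor to a cube intersecting $\overline{E}$, reducing matters to the main case. For a cube $Q_0$ meeting $E$, Lemma~\ref{lemma2} applied with $\theta=-\alpha$ and $\theta=\alpha/(p-1)$ bounds the two factors of \eqref{eq1} from above by the dyadic sums $\sum_{Q\in\mathcal{D}_E(Q_0)} l(Q)^\theta |Q|/|Q_0|$. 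Writing $F := \mathcal{S}_{Q_0}$, each such sum equals $\int_0^1 F(t)^\theta\,dt$ by the standard CDF--quantile change of variables applied to the pore-size measure $\nu(A) := \sum_{l(Q) \in A}|Q|/|Q_0|$; since $\mathcal{L}_{Q_0}$ is the quantile function of the same $\nu$ arranged from the largest end, a direct atomic inspection yields $\mathcal{L}_{Q_0}(t) = F(1-t)$ for a.e.\ $t \in (0,1)$, so the hypothesis translates to $F(1-t) \leq C_0 t^{-\sigma} F(t)$ a.e.

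The core step is to split $A := \int_0^1 F^{-\alpha}\,dt$ and $B := \int_0^1 F^{\alpha/(p-1)}\,dt$ at $t = 1/2$ and combine monotonicity of $F$ with the hypothesis. On $[1/2,1]$, monotonicity gives $\int_{1/2}^1 F^{-\alpha} \leq \tfrac{1}{2} F(1/2)^{-\alpha}$; on $(0,1/2)$, the hypothesis yields $F(t) \geq C_0^{-1} t^\sigma F(1-t) \geq C_0^{-1} t^\sigma F(1/2)$, so that
\begin{equation*}
\int_0^{1/2} F(t)^{-\alpha}\,dt \;\leq\; C_0^\alpha F(1/2)^{-\alpha} \int_0^{1/2} t^{-\sigma\alpha}\,dt,
\end{equation*}
finite precisely when $\alpha < \sigma^{-1}$. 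By a symmetric argument, monotonicity yields $\int_0^{1/2} F^{\alpha/(p-1)} \leq \tfrac{1}{2} F(1/2)^{\alpha/(p-1)}$, and the hypothesis rewritten as $F(t) \leq C_0(1-t)^{-\sigma} F(1-t) \leq C_0(1-t)^{-\sigma} F(1/2)$ on $(1/2,1)$ gives
\begin{equation*}
\int_{1/2}^1 F(t)^{\alpha/(p-1)}\,dt \;\leq\; C_0^{\alpha/(p-1)} F(1/2)^{\alpha/(p-1)} \int_0^{1/2} u^{-\sigma\alpha/(p-1)}\,du,
\end{equation*}
finite precisely when $\alpha < (p-1)\sigma^{-1}$. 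Together these produce $A \leq C_1 F(1/2)^{-\alpha}$ and $B \leq C_2 F(1/2)^{\alpha/(p-1)}$; in the product $A\cdot B^{p-1}$ the unknown factor $F(1/2)$ cancels exactly, yielding the required uniform bound $C_1 C_2^{p-1}$.

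The main obstacle will be justifying the a.e.\ identification $\mathcal{L}_{Q_0}(t) = F(1-t)$ and the consequent rewriting of the hypothesis, since the two quantile functions may disagree pointwise at the countably many atoms of $\nu$; this has to be handled without losing uniform constants. A secondary subtlety is that the upper bound in Lemma~\ref{lemma2} requires the exponent to exceed $-1$: for $\theta = -\alpha$ this forces $\alpha < 1$, which is automatic in the stated range whenever $\sigma \geq 1$ but otherwise demands either an implicit restriction on $\alpha$ or a sharper replacement for the Lemma's upper estimate.
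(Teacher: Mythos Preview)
Your approach is correct and genuinely different from the paper's. The paper decomposes $\mathcal{D}_E(Q_0)$ into dyadic shells $\{\mathcal{S}_{Q_0}(2^{-k-1})\leq l(Q)<\mathcal{S}_{Q_0}(2^{-k})\}$ (and the analogous shells with $\mathcal{L}$), bounds each shell using Proposition~\ref{prop4} and Lemma~\ref{lemma4}, and sums a geometric series; you instead recognize the dyadic sum as $\int_0^1 F(t)^\theta\,dt$ for the quantile $F=\mathcal{S}_{Q_0}$ and estimate the integral directly after splitting at $t=1/2$. Your route is shorter and conceptually cleaner, trading the explicit shell bookkeeping for a single change-of-variables observation; the paper's version has the minor advantage of staying entirely within the discrete dyadic language already established. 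Both arguments pivot on the same cancellation of a ``median'' pore size ($F(1/2)$ for you, $\mathcal{L}_{Q_0}(1/2)$ for the paper) in the product $A\cdot B^{p-1}$, and both yield the identical threshold $\alpha<\min\{\sigma^{-1},(p-1)\sigma^{-1}\}$.

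Your stated ``main obstacle'' is in fact not one: you do not need the a.e.\ identity $\mathcal{L}_{Q_0}(t)=F(1-t)$, only the pointwise inequality $F(1-t)\leq\mathcal{L}_{Q_0}(t)$, and this is exactly Lemma~\ref{lemma4}\,(i). Combined with the hypothesis it gives $F(1-t)\leq\mathcal{L}_{Q_0}(t)\leq C_0 t^{-\sigma}F(t)$ for \emph{every} $t\in(0,1)$, which is all your integral estimates use (the $B$-estimate on $(1/2,1)$ is the same inequality at $1-t$). As for the secondary subtlety: you are right that Lemma~\ref{lemma2} forces $\alpha<1$, and the paper's own proof has the very same restriction (it opens Case~III with ``let $0<\alpha<1$''), so this is not a defect of your argument relative to the original.
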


\begin{proof}
    Pick $1<p<\infty$. We look for constants $\alpha,C>0$ such that the inequality 
    $$\Big(\fint_{Q_0}\dist(x,E)^{-\alpha}dx\Big)\Big(\fint_{Q_0}\dist(x,E)^{\frac{\alpha}{p-1}}dx\Big)^{p-1}\leq C$$
    holds for any cube $Q_0\subset\mathbb{R}^n$. To this end, we consider three cases separately. In the following, denote by $\dist(A,B):=\inf_{x\in A,y\in B}|x-y|$ the distance between two sets $A,B\subset\mathbb{R}^n$. Also, we let $\text{diam}(A):=\sup_{x,y\in A}|x-y|$ be the diameter of a set $A\subset\mathbb{R}^n$. \\
    
    \noindent\textbf{Case I} ($Q_0\cap E=\emptyset\land \dist(Q_0,E)\geq2\text{diam}(Q_0)$): in this case, it can be shown that if $x_0$ is some fixed point in $Q_0$, then $\frac{1}{2}\dist(x_0,E)\leq \dist(x,E)\leq2\dist(x_0,E)$ for every $x\in Q_0$. Thus, for any choice of $\alpha>0$ we have that $\fint_{Q_0}\dist(x,E)^{-\alpha}dx\leq2^\alpha \dist(x_0,E)^{-\alpha}$ and $\fint_{Q_0}\dist(x,E)^{\frac{\alpha}{p-1}}dx\leq2^{\frac{\alpha}{p-1}}\dist(x_0,E)^{\frac{\alpha}{p-1}}$, which means
    $$\Big(\fint_{Q_0}\dist(x,E)^{-\alpha}dx\Big)\Big(\fint_{Q_0}\dist(x,E)^{\frac{\alpha}{p-1}}dx\Big)^{p-1}\leq 4^\alpha.$$

    \noindent\textbf{Case II} ($Q_0\cap E=\emptyset\land\dist(Q_0,E)<2\text{diam}(Q_0)$): let us notice that $4Q_0\cap E\neq\emptyset$, where $4Q_0$ is the cube having the same center as $Q_0$ but such that $l(4Q_0)=4l(Q_0)$. Now assuming the $A_p$ condition holds for some $\alpha>0$ and with a constant $C>0$ for any cube intersecting $E$, we have
    \begin{align*}
        \fint_{Q_0}\dist(x,E)^{-\alpha}dx&\leq4^n\fint_{4Q_0}\dist(x,E)^{-\alpha}dx\\
        &\leq4^nC\Big(\fint_{4Q_0}\dist(x,E)^{\frac{\alpha}{p-1}}dx\Big)^{1-p}\leq C(n,p)\Big(\fint_{Q_0}\dist(x,E)^{\frac{\alpha}{p-1}}dx\Big)^{1-p}.
    \end{align*}
    Thus, proving this case amounts to proving the remaining Case III. \\
    
    \noindent\textbf{Case III} ($Q_0\cap E\neq\emptyset$): let $0<\alpha<1$ to be chosen later and begin by writing
    $$\mathcal{D}_E(Q_0)=\{Q:l(Q)\geq\mathcal{S}_{Q_0}(\tfrac{1}{2})\}\cup\bigcup_{k=1}^\infty\{Q:\mathcal{S}_{Q_0}(\tfrac{1}{2^{k+1}})\leq l(Q)<\mathcal{S}_{Q_0}(\tfrac{1}{2^k})\}.$$
    This equality is justified by the limit behavior $\lim_{t\to0^+}\mathcal{S}_{Q_0}(t)=0$ given by Lemma \ref{lemma4} (ii) and allows to make the estimate
    \begin{align*}
        \fint_{Q_0}\dist(x,E)^{-\alpha}dx\leq \hspace{11cm}\\ C(n,\alpha)\Big[\sum_{l(Q)\geq\mathcal{S}_{Q_0}(\frac{1}{2})}l(Q)^{-\alpha}\frac{|Q|}{|Q_0|}+\sum_{k=1}^\infty\Big(\sum_{\mathcal{S}_{Q_0}(\frac{1}{2^{k+1}})\leq l(Q)<\mathcal{S}_{Q_0}(\frac{1}{2^k})}l(Q)^{-\alpha}\frac{|Q|}{|Q_0|}\Big)\Big].
    \end{align*}
    We can find an appropriate bound for the first term inside the brackets making
    \begin{align*}
        \sum_{l(Q)\geq\mathcal{S}_{Q_0}(\frac{1}{2})}l(Q)^{-\alpha}\frac{|Q|}{|Q_0|}\leq\mathcal{S}_{Q_0}(\tfrac{1}{2})^{-\alpha}\leq\mathcal{L}_{Q_0}(\tfrac{1}{2})^{-\alpha},
    \end{align*}
    where Lemma \ref{lemma4} (i) was used to arrive at the last inequality. This same lemma together with \eqref{eq5} can be applied as follows to deduce a similar estimate for the remaining term:
    \begin{align*}
        \sum_{k=1}^\infty\Big(\sum_{\mathcal{S}_{Q_0}(\frac{1}{2^{k+1}})\leq l(Q)<\mathcal{S}_{Q_0}(\frac{1}{2^k})}l(Q)^{-\alpha}&\frac{|Q|}{|Q_0|}\Big)\hspace{10cm} \\
        &\leq\sum_{k=1}^\infty\mathcal{S}_{Q_0}(\tfrac{1}{2^{k+1}})^{-\alpha}\sum_{l(Q)<\mathcal{S}_{Q_0}(\frac{1}{2^k})}\frac{|Q|}{|Q_0|} \\
        &\leq C(\alpha,C_0)\sum_{k=1}^\infty2^{\alpha\sigma(k+1)}\mathcal{L}_{Q_0}(\tfrac{1}{2^{k+1}})^{-\alpha}\sum_{l(Q)<\mathcal{S}_{Q_0}(\frac{1}{2^k})}\frac{|Q|}{|Q_0|} \\
        &\leq C(\alpha,C_0)\mathcal{L}_{Q_0}(\tfrac{1}{2})^{-\alpha}\sum_{k=1}^\infty2^{\alpha\sigma(k+1)}\sum_{l(Q)<\mathcal{L}_{Q_0}(1-\frac{1}{2^k})}\frac{|Q|}{|Q_0|} \\
        &\leq C(\alpha,C_0)\mathcal{L}_{Q_0}(\tfrac{1}{2})^{-\alpha}\sum_{k=1}^\infty2^{\alpha\sigma(k+1)-k} \\
        &=C(\alpha,\sigma,C_0)\mathcal{L}_{Q_0}(\tfrac{1}{2})^{-\alpha}\sum_{k=1}^\infty2^{k(\alpha\sigma-1)}\\
        &=C(\alpha,\sigma,C_0)\mathcal{L}_{Q_0}(\tfrac{1}{2})^{-\alpha},
    \end{align*}
    if $\alpha<\sigma^{-1}$. We have shown that for these values of $\alpha$ the integral $\fint_{Q_0}d(x,E)^{-\alpha}dx$ is bounded by $\mathcal{L}_{Q_0}(\tfrac{1}{2})^{-\alpha}$ times some constant. All is left to show is the inequality $(\fint_{Q_0}d(x,E)^\frac{\alpha}{p-1} dx)^{p-1}\leq C\mathcal{L}_{Q_0}(\tfrac{1}{2})^\alpha$ for small enough values of $\alpha$. To this end we now consider the partition
    $$\mathcal{D}_E(Q_0)=\{Q:l(Q)\leq\mathcal{L}_{Q_0}(\tfrac{1}{2})\}\cup\bigcup_{k=1}^\infty\{Q:\mathcal{L}_{Q_0}(\tfrac{1}{2^k})<l(Q)\leq\mathcal{L}_{Q_0}(\tfrac{1}{2^{k+1}})\}.$$
    For sufficiently large \( k \), the sets \( \{\mathcal{L}_{Q_0}(\tfrac{1}{2^k})<l(Q)\leq\mathcal{L}_{Q_0}(\tfrac{1}{2^{k+1}})\} \) eventually become empty, but this does not compromise the proof. As before, we begin by splitting the considered integral as
    \begin{align*}
        \fint_{Q_0}\dist(x,E)^{\frac{\alpha}{p-1}}dx\leq \hspace{10.9cm}\\ C(n,\alpha,p)\Big[\sum_{l(Q)\leq\mathcal{L}_{Q_0}(\frac{1}{2})}l(Q)^{\frac{\alpha}{p-1}}\frac{|Q|}{|Q_0|}+\sum_{k=1}^\infty\Big(\sum_{\mathcal{L}_{Q_0}(\frac{1}{2^k})< l(Q)\leq\mathcal{L}_{Q_0}(\frac{1}{2^{k+1}})}l(Q)^{\frac{\alpha}{p-1}}\frac{|Q|}{|Q_0|}\Big)\Big].
    \end{align*}
    The first term can easily be shown to be less than or equal to $\mathcal{L}_{Q_0}(\frac{1}{2})^{\frac{\alpha}{p-1}}$. For the second term, we once again rely on Lemma \ref{lemma4} and Proposition \ref{prop4} to make the estimate
    \begin{align*}
        \sum_{k=1}^\infty\Big(\sum_{\mathcal{L}_{Q_0}(\frac{1}{2^k})< l(Q)\leq\mathcal{L}_{Q_0}(\frac{1}{2^{k+1}})}&l(Q)^{\frac{\alpha}{p-1}}\frac{|Q|}{|Q_0|}\Big) \\
        &\leq\sum_{k=1}^\infty\mathcal{L}_{Q_0}(\tfrac{1}{2^{k+1}})^{\frac{\alpha}{p-1}}\sum_{l(Q)>\mathcal{L}_{Q_0}(\frac{1}{2^k})}\frac{|Q|}{|Q_0|} \\
        &\leq C(\alpha,p,C_0)\sum_{k=1}^\infty2^{\frac{\alpha\sigma}{p-1}(k+1)}\mathcal{S}_{Q_0}(\tfrac{1}{2^{k+1}})^{\frac{\alpha}{p-1}}\sum_{l(Q)>\mathcal{L}_{Q_0}(\frac{1}{2^k})}\frac{|Q|}{|Q_0|} \\
        &\leq C(\alpha,p,C_0)\mathcal{S}_{Q_0}(\tfrac{1}{2})^{\frac{\alpha}{p-1}}\sum_{k=1}^\infty2^{\frac{\alpha\sigma}{p-1}(k+1)}\sum_{l(Q)>\mathcal{S}_{Q_0}(1-\frac{1}{2^k})}\frac{|Q|}{|Q_0|} \\
        &\leq C(\alpha,p,\sigma,C_0)\mathcal{S}_{Q_0}(\tfrac{1}{2})^{\frac{\alpha}{p-1}}\sum_{k=1}^\infty2^{k(\frac{\alpha\sigma}{p-1}-1)} \\
        &\leq C(\alpha,p,\sigma,C_0)\mathcal{L}_{Q_0}(\tfrac{1}{2})^{\frac{\alpha}{p-1}}
    \end{align*}
    taking $\alpha<(p-1)\sigma^{-1}$, from which $(\fint_{Q_0}d(x,E)^\frac{\alpha}{p-1} dx)^{p-1}\leq C\mathcal{L}_{Q_0}(\tfrac{1}{2})^\alpha$ is deduced.
\end{proof}

\section{Second characterization of $A_p$ sets}
\label{sec5}

Theorems \ref{teo3} and \ref{teo4} combined provide a first characterization of $A_p$ sets based on an inequality that involves functions $\mathcal{L}$ and $\mathcal{S}$ evaluated at every $0<t<1$. In this section, we aim to demonstrate that a new characterization is possible where these functions are evaluated only at a single point, thus proving Theorem \ref{teo0}. To this end, let us start by remarking that \eqref{eq0} can be rewritten as
\begin{equation}
    \label{eq9}
    0<\frac{\sup\mathscr{L}(s,Q_0,E)}{\inf\mathscr{S}(s,Q_0,E)}\leq C_0
\end{equation}
for every cube $Q_0$ with $Q_0\cap\overline{E}\neq\emptyset$. A set $E$ for which this inequality holds for every appropriate cube will necessary have zero measure, so we can drop this extra hypothesis on upcoming results.

\begin{proposition}
    \label{prop5}
    Let $E\subset\mathbb{R}^n$ be a closed non-empty set such that there exist constants $0<s<1$ and $C_0>0$ for which \eqref{eq9} holds for every cube $Q_0$ intersecting $E$. Then, $|E|=0$.
\end{proposition}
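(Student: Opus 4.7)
The approach is by contradiction: I will suppose $|E|>0$ and exhibit a cube intersecting $E$ for which the ratio in \eqref{eq9} cannot satisfy $0<\cdot\leq C_0$.

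The first step is to extract the geometric content of the hypothesis via the identity
$$\sum_{Q\in\mathcal{D}_E(Q_0)}|Q|=|Q_0\setminus\overline{E}|$$
for every cube $Q_0$ with $Q_0\cap E\neq\emptyset$. This is precisely what the first half of the proof of Lemma \ref{lemma1} establishes --- the cubes in $\mathcal{D}_E(Q_0)$ form a disjoint cover of $Q_0\setminus\overline{E}$, and that part uses only closedness of $E$ and the maximality construction (the assumption $|E|=0$ was invoked there only to rule out finiteness of the family). Once this identity is in hand, if $|Q_0\setminus\overline{E}|<s|Q_0|$, then for every $L\geq 0$ one has $\sum_{l(Q)\geq L}|Q|/|Q_0|<s$, so $\mathscr{L}(s,Q_0,E)=\emptyset$ and $\sup\mathscr{L}(s,Q_0,E)$ cannot be strictly positive, violating \eqref{eq9}.

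Hence the hypothesis forces the uniform bound $|\overline{E}\cap Q_0|/|Q_0|\leq 1-s$ for every cube $Q_0$ intersecting $E$. The second step is to contradict this using the Lebesgue density theorem: since $|E|>0$, almost every point of $E$ is a density point of $E$, so we can fix some $x_0\in E$ at which $|E\cap Q|/|Q|\to 1$ as $Q$ ranges over cubes containing $x_0$ with $l(Q)\to 0$. Choosing such a $Q_0$ with $l(Q_0)$ small enough that $|E\cap Q_0|/|Q_0|>1-s$ gives $|\overline{E}\cap Q_0|/|Q_0|>1-s$ and $Q_0\cap E\neq\emptyset$, which directly contradicts the previous step. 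Therefore $|E|=0$.

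I do not foresee any serious obstacle: the only mild technical points are to confirm that the disjoint-cover identity does not require $|E|=0$, and to invoke a version of the Lebesgue density theorem adapted to cubes containing (rather than centered at) the density point, which is standard since cubes have bounded eccentricity.
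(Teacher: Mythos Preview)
Your proposal is correct and follows essentially the same route as the paper: both arguments deduce from the nonemptiness of $\mathscr{L}(s,Q_0,E)$ the uniform bound $|E\cap Q_0|\le(1-s)|Q_0|$ for every cube meeting $E$, and then invoke the Lebesgue density (differentiation) theorem to conclude $|E|=0$. The only cosmetic differences are that the paper argues directly (showing $\mathbbm{1}_E<1$ a.e.\ via centered cubes) while you argue by contradiction from a density point using containing cubes, and that the paper obtains the bound by picking one $L\in\mathscr{L}(s,Q_0,E)$ and using the inclusion $\bigcup_{l(Q)\ge L}Q\subset Q_0\setminus E$ rather than the full identity $\sum_{Q\in\mathcal{D}_E(Q_0)}|Q|=|Q_0\setminus E|$.
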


\begin{proof}
    Suppose that $Q_0$ is a cube intersecting $E$. Following the usual convention of assigning the values $\sup\Omega=0$ and $\inf\Omega=\infty$ whenever the set $\Omega\subset\mathbb{R}_{\geq0}$ is empty, the positivity of the fraction in \eqref{eq9} implies that both $\mathscr{L}(s,Q_0,E)$ and $\mathscr{S}(s,Q_0,E)$ are non-empty sets. Thus, we can pick any $L\in\mathscr{L}(s,Q_0,E)$ and write
    $$|E\cap Q_0|=|Q_0\setminus (\mathbb{R}^n\setminus E)|\leq\Big|Q_0\setminus\bigcup_{\substack{Q\in\mathcal{D}_E(Q_0)\\l(Q)\geq L}}Q\Big|\leq(1-s)|Q_0|.$$
    Let us observe that the bound $|E\cap Q_0|\leq(1-s)|Q_0|$ also holds trivially in the case where $Q_0\cap E=\emptyset$. Thus, if $\mathbbm{1}_E$ denotes the characteristic function of $E$ and $Q(x,r)$ is the cube centered at $x$ and with $l(Q)=2r$, then for almost every $x\in\mathbb{R}^n$ we have that
    \begin{align*}
        \mathbbm{1}_E(x)=\lim_{r\to0^+}\fint_{Q(x,r)}\mathbbm{1}_E(y)dy=\lim_{r\to0^+}\frac{|E\cap Q(x,r)|}{|Q(x,r)|}\leq(1-s)<1.
    \end{align*}
    Therefore, $|E|=0$.
\end{proof}

By Proposition \ref{prop5} and in view of Definition \ref{def3}, \eqref{eq0} and \eqref{eq9} are both equivalent to the condition
\begin{equation}
    \label{eq10}
    \mathcal{L}_{Q_0}(s)\leq C_0\:\mathcal{S}_{Q_0}(s)
\end{equation}
for every cube $Q_0$ with $Q_0\cap\overline{E}\neq\emptyset$. The validity of \eqref{eq10} for a sufficiently small value of $s$ is the condition to be shown equivalent to the first characterization of $A_p$ sets. Although, we will need to prove first a technical lemma before demonstrating this.

\begin{lemma}
    \label{lemma5}
    Given a closed set $E$ with $|E|=0$, the following conditions are equivalent.
    \begin{itemize}
        \item[(i)] There exist constants $\sigma,C_0>0$ such that for every cube $Q_0$ intersecting $E$ and every $t\in(0,1)$,
        \begin{equation*}
        \mathcal{L}_{Q_0}(t)\leq\frac{C_0}{t^\sigma}\mathcal{S}_{Q_0}(t).
    \end{equation*}
        \item[(ii)] There exist constants $\sigma,C_0'>0$ and $0<t'<\tfrac{1}{2}$ such that for every cube $Q_0$ intersecting $E$ and every $0<t\leq t'$,
        \begin{equation*}
        \mathcal{L}_{Q_0}(t)\leq\frac{C_0'}{t^\sigma}\mathcal{S}_{Q_0}(t).
        \end{equation*}
    \end{itemize}
\end{lemma}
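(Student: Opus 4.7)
The implication (i) $\Rightarrow$ (ii) is immediate: any $t' \in (0, 1/2)$ together with the same constants $\sigma$ and $C_0' = C_0$ already meets the requirement of (ii). The substantive content of the lemma is the reverse direction (ii) $\Rightarrow$ (i), which amounts to propagating the inequality from the small-$t$ range $(0, t']$ to the full interval $(0, 1)$.

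The key observation is the monotonicity recorded in Definition~\ref{def3} (and exploited already in Proposition~\ref{prop3}): the function $\mathcal{L}_{Q_0}$ is non-increasing and $\mathcal{S}_{Q_0}$ is non-decreasing. Fixing a cube $Q_0$ that intersects $E$ and any $t \in [t', 1)$, the plan is to combine these monotonicities with the hypothesis (ii) evaluated at the single point $t'$, yielding
$$\mathcal{L}_{Q_0}(t) \leq \mathcal{L}_{Q_0}(t') \leq \frac{C_0'}{(t')^\sigma}\, \mathcal{S}_{Q_0}(t') \leq \frac{C_0'}{(t')^\sigma}\, \mathcal{S}_{Q_0}(t).$$
Since $t < 1$ forces $t^{-\sigma} > 1$, the leading constant $C_0'/(t')^\sigma$ is bounded above by $(C_0'/(t')^\sigma)\, t^{-\sigma}$, which brings the inequality into the form demanded by (i). For the remaining range $t \in (0, t']$, hypothesis (ii) already provides the inequality with constant $C_0'$, and a fortiori with the larger constant $C_0'/(t')^\sigma$ since $t' < 1$. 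Thus taking $C_0 := C_0'/(t')^\sigma$ and retaining the same exponent $\sigma$, the estimate in (i) holds uniformly for every $t \in (0, 1)$.

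There is no serious obstacle to carry out this argument: its only ingredients are the monotonicity of $\mathcal{L}_{Q_0}$ and $\mathcal{S}_{Q_0}$ and the elementary bound $t^{-\sigma} > 1$ on $(0, 1)$. The role of the threshold $t'$ is merely to split $(0,1)$ into the regime where (ii) applies directly and the regime where monotonicity alone suffices; the specific upper bound $t' < 1/2$ plays no role in the present argument and is presumably kept in reserve for use in the next section, where Lemma~\ref{lemma5} will be combined with Theorems~\ref{teo3} and~\ref{teo4} to match the numerical range $0 < s < (1 + 2^n)^{-1}$ appearing in Theorem~\ref{teo0}.
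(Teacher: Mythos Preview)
Your proposal is correct and follows essentially the same approach as the paper's proof: both use the monotonicity of $\mathcal{L}_{Q_0}$ and $\mathcal{S}_{Q_0}$ together with the hypothesis at $t'$ and the trivial bound $t^{-\sigma}>1$, arriving at the same constant $C_0=C_0'(t')^{-\sigma}$. The only cosmetic difference is the order in which the monotonicity of $\mathcal{S}_{Q_0}$ and the insertion of the factor $t^{-\sigma}$ are applied.
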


\begin{proof}
    (i)$\implies$(ii) is immediate. To see that (ii)$\implies$(i), let $t'<t<1$, then 
    $$\mathcal{L}_{Q_0}(t)\leq\mathcal{L}_{Q_0}(t')\leq\frac{C_0'}{(t')^\sigma}\mathcal{S}_{Q_0}(t')\leq\frac{C_0'}{(tt')^\sigma}\mathcal{S}_{Q_0}(t')\leq\frac{C_0}{t^\sigma}\mathcal{S}_{Q_0}(t)$$
    where $C_0:=C_0'(t')^{-\sigma}$.
\end{proof}

From now on, if $Q\in\mathcal{D}^k(Q_0)$, we set $\pi_0Q:=Q$ and introduce the notation $\pi_j Q:=\pi(\pi_{j-1}Q)$ for $1\leq j\leq k$. In particular, $\pi_k Q=Q_0$. Also, let us make the following important remark. If $Q_0\cap\overline{E}\neq\emptyset$ and $Q\in\mathcal{D}(Q_0)$ with $Q\cap\overline{E}\neq\emptyset$ as well, then $\mathcal{D}_E(Q)\subset\mathcal{D}_E(Q_0)$, this is, the grids are compatible.

\begin{lemma}
    \label{lemmaa}
    Let $E\subset\mathbb{R}^n$ be a closed non-empty set. Then, the next statements are equivalent.
    \begin{itemize}
        \item[(i)] $|E|=0$ and there exist constants $\Tilde{\sigma},\Tilde{C}_0>0$ satisfying
        $$\mathcal{L}_{Q_0}(t)\leq\frac{\Tilde{C}_0}{t^{\Tilde{\sigma}}}\mathcal{S}_{Q_0}(t)$$
        for every cube $Q_0$ intersecting $E$ and every $t\in(0,1)$.
        \item[(ii)] There exist $C_0>0$ and $0<s<(1+2^n)^{-1}$ such that \eqref{eq10} holds for every cube $Q_0\subset\mathbb{R}^n$ intersecting $E$.
    \end{itemize}
\end{lemma}

\begin{proof}
    Clearly, (i) implies (ii), so we turn our attention to the remaining implication. Fix then a cube $Q_0$ in $\mathbb{R}^n$ and let $0<t\leq s$. After all, by Lemma \ref{lemma5}, it is enough to prove the inequality $\mathcal{L}_{Q_0}(t)\leq\frac{C_0}{t^\sigma}\mathcal{S}_{Q_0}(t)$ for values of $t$ in $(0,s]$. According to our hypothesis, $\mathcal{L}_{Q_0}(s)\leq C_0\:\mathcal{S}_{Q_0}(s)$. But since $\mathcal{L}_{Q_0}(\tfrac{1}{2})\geq\mathcal{S}_{Q_0}(\frac{1}{2})$ by Lemma \ref{lemma4} and recalling that $\mathcal{L}_{Q_0}$ is non-increasing while $\mathcal{S}_{Q_0}$ is non-decreasing, then $\mathcal{L}_{Q_0}(s)\geq\mathcal{S}_{Q_0}(s)$ and necessarily $C_0\geq1$.
    
    We claim that $\mathcal{S}_{Q_0}(s)\leq C_0(\frac{s}{t})^\sigma\mathcal{S}_{Q_0}(t)$ for some $\sigma>0$. If $\mathcal{S}_{Q_0}(t)=\mathcal{S}_{Q_0}(s)$, there is nothing to prove (as we just saw that $C_0\geq1$), so let us assume without loss of generality that $\mathcal{S}_{Q_0}(t)<\mathcal{S}_{Q_0}(s)$. Notice that
    \begin{equation}
        \label{eqa}
        t\leq\sum_{l(Q)\leq\mathcal{S}_{Q_0}(t)}\frac{|Q|}{|Q_0|}<s
    \end{equation}
    where the second inequality comes from the fact that $\mathcal{S}_{Q_0}(t)<\mathcal{S}_{Q_0}(s)=\min\mathscr{S}(s,Q_0,E)$ and so $\mathcal{S}_{Q_0}(t)\notin\mathscr{S}(s,Q_0,E)$.

    Let $\{Q_\alpha\}_{\alpha\in A}=\{Q\in\mathcal{D}_E(Q_0):l(Q)\leq\mathcal{S}_{Q_0}(t)\}$ for some appropriate index set $A$. Given some $\alpha\in A$, $Q_\alpha\in\mathcal{D}^k(Q_0)$ for some generation $k=k(\alpha)$, so we can define 
    $$J_\alpha:=\max\Big\{j\in\{0,\hdots,k\}:\frac{|\pi_jQ_\alpha\cap\bigcup_{\beta\in A} Q_\beta|}{|\pi_jQ_\alpha|}\geq s\Big\}.$$
    Since $|\pi_k Q_\alpha\cap\bigcup_{\beta\in A}Q_\beta||\pi_k Q_\alpha|^{-1}=|\bigcup_{\beta\in A}Q_\beta||Q_0|^{-1}<s$ by \eqref{eqa}, clearly $J_\alpha<k$. Furthermore, $|\pi_0 Q_\alpha\cap\bigcup_{\beta\in A}Q_\beta||\pi_0 Q_\alpha|^{-1}=|Q_\alpha||Q_\alpha|^{-1}=1$ and $|\pi_1 Q_\alpha\cap\bigcup_{\beta\in A}Q_\beta||\pi_1 Q_\alpha|^{-1}\geq|Q_\alpha||\pi Q_\alpha|^{-1}=\frac{1}{2^n}\geq s$, so $J_\alpha\geq1$. In particular, $\pi Q_\alpha\subset \pi_{J_\alpha}Q_\alpha$ and thus $\pi_{J_\alpha}Q_\alpha\cap E\neq\emptyset$. We can also see that, since $\pi_{J_\alpha+1}Q_\alpha$ is well-defined,
    \begin{align*}
        \frac{|\pi_{J_\alpha}Q_\alpha\cap\bigcup_\beta Q_\beta|}{|\pi_{J_\alpha}Q_\alpha|}\leq\frac{|\pi_{J_\alpha+1}Q_\alpha\cap\bigcup_\beta Q_\beta|}{|\pi_{J_\alpha}Q_\alpha|}=2^n\frac{|\pi_{J_\alpha+1}Q_\alpha\cap\bigcup_\beta Q_\beta|}{|\pi_{J_{\alpha+1}}Q_\alpha|}<2^ns.
    \end{align*}
    Thus, $|\pi_{J_\alpha}Q_\alpha\cap\bigcup_\beta Q_\beta|<2^ns|\pi_{J_\alpha}Q_\alpha|$. Denoting $P_\alpha:=\pi_{J_\alpha}Q_\alpha$, since $P_\alpha\cap P_\beta\neq\emptyset$ if and only if $P_\alpha\subset P_\beta$ or $P_\beta\subset P_\alpha$, we can find another index set $\Gamma$ such that $\bigcup_{\alpha\in\Gamma}P_\alpha=\bigcup_{\alpha\in A}P_\alpha$ and $P_\alpha\cap P_\beta=\emptyset$ for every pair of indexes $\alpha,\beta\in\Gamma$. In other words, $\Gamma$ is chosen in order to have that $\{P_\alpha\}_{\alpha\in\Gamma}$ is the maximal subcollection of $\{P_\alpha\}_{\alpha\in A}$. Using the pairwise disjoint property of the collection $\{P_\alpha\}_{\alpha\in\Gamma}$, we find that
    \begin{equation}
        \label{eqa}
        \Big|\bigcup_{\beta\in A}Q_\beta\Big|=\Big|\bigcup_{\alpha\in\Gamma}P_\alpha\cap\bigcup_{\beta\in A}Q_\beta\Big|=\sum_{\alpha\in\Gamma}\Big|P_\alpha\cap\bigcup_{\beta\in A}Q_\beta\Big|<2^ns\sum_{\alpha\in\Gamma}|P_\alpha|=2^ns\Big|\bigcup_{\alpha\in\Gamma}P_\alpha\Big|. 
    \end{equation}
    Now, for a fixed $\alpha\in\Gamma$ we obtain
    \begin{align*}
        \sum_{\substack{Q\in\mathcal{D}_E(P_\alpha)\\l(Q)\leq \mathcal{S}_{Q_0}(t)}}\frac{|Q|}{|P_\alpha|}=\sum_{\beta\in A:Q_\beta\subset P_\alpha}\frac{|Q_\beta|}{|P_\alpha|}=\frac{|P_\alpha\cap\bigcup_{\beta\in A}Q_\beta|}{|P_\alpha|}\geq s
    \end{align*}
    by definition of $P_\alpha$. But then, $\mathcal{S}_{Q_0}(t)\in\mathscr{S}(s,P_\alpha,E)$ and thus $\mathcal{S}_{Q_0}(t)\geq\mathcal{S}_{P_\alpha}(s)\geq C_0^{-1}\mathcal{L}_{P_\alpha}(s)\geq C_0^{-1}\mathcal{S}_{P_\alpha}(1-s)$ by means of (ii) applied to the cube $P_\alpha$. This implies $C_0\mathcal{S}_{Q_0}(t)\in\mathscr{S}(1-s,P_\alpha,E)$ and
    $$\sum_{\substack{Q\in\mathcal{D}_E(P_\alpha)\\l(Q)\leq C_0\mathcal{S}_{Q_0}(t)}}\frac{|Q|}{|P_\alpha|}\geq1-s.$$

    Observing that $\bigcup_{\alpha\in\Gamma}\mathcal{D}_E(P_\alpha)\subset\mathcal{D}_E(Q_0)$ and making use of \eqref{eqa}, we find that
    \begin{align*}
        \sum_{\substack{Q\in\mathcal{D}_E(Q_0)\\l(Q)\leq C_0\mathcal{S}_{Q_0}(t)}}\frac{|Q|}{|Q_0|}\geq\sum_{\alpha\in\Gamma}\frac{|P_\alpha|}{|Q_0|}\sum_{\substack{Q\in\mathcal{D}_E(P_\alpha)\\l(Q)\leq C_0\mathcal{S}_{Q_0}(t)}}\frac{|Q|}{|P_\alpha|}&\geq(1-s)\sum_{\alpha\in\Gamma}\frac{|P_\alpha|}{|Q_0|} \\
        &=(1-s)\frac{|\bigcup_{\alpha\in\Gamma}P_\alpha|}{|Q_0|} \\
        &\geq\frac{(1-s)}{2^ns}\frac{|\bigcup_{\alpha\in A}Q_\alpha|}{|Q_0|} \\
        &=\frac{(1-s)}{2^ns}\sum_{\substack{Q\in\mathcal{D}_E(Q_0)\\l(Q)\leq \mathcal{S}_{Q_0}(t)}}\frac{|Q|}{|Q_0|} \\
        &\geq\frac{(1-s)t}{2^ns}.
    \end{align*}

    This last inequality implies that $\mathcal{S}_{Q_0}(t)\geq C_0^{-1}\mathcal{S}_{Q_0}(ct)$, where $c:=\frac{(1-s)}{2^ns}$. Since $s<\frac{1}{2^n+1}$, the constant $c$ is strictly greater than one and $0<ct<1$ for every $t\leq s$. Let $m$ be the integer such that $c^{m-1}t<s\leq c^mt$ or, equivalently, $m-1<\frac{\log\frac{s}{t}}{\log c}\leq m$. Then,
    \begin{align*}
        \mathcal{S}_{Q_0}(t)\geq C_0^{-1}\mathcal{S}_{Q_0}(ct)\geq\hdots\geq C_0^{-m}\mathcal{S}_{Q_0}(c^mt)\geq C_0^{-m}\mathcal{S}_{Q_0}(s),
    \end{align*}
    this is,
    \begin{align*}
        \mathcal{S}_{Q_0}(s)\leq C_0^m\mathcal{S}_{Q_0}(t)\leq C_0^{1+\frac{\log\frac{s}{t}}{\log c}}\mathcal{S}_{Q_0}(t)=C_0\Big(\frac{s}{t}\Big)^{\frac{\log C_0}{\log c}}\mathcal{S}_{Q_0}(t).
    \end{align*}

    Notice that, if we had $C_0=1$, then the above computations show that $0<\mathcal{S}_{Q_0}(s)\leq\mathcal{S}_{Q_0}(t)\to0$ as $t\to0^+$, so $C_0$ must actually be strictly grater than $1$. Thus, the original claim is proved with $\sigma:=\frac{\log C_0}{\log c}>0$. Next, we claim that $\mathcal{L}_{Q_0}(t)\leq C_0(\frac{s}{t})^\sigma\mathcal{L}_{Q_0}(s)$ for $0<t\leq s$, again with $\sigma=\frac{\log C_0}{\log c}$, which can be proved in exactly the same way as the previous claim, and therefore the proof is omitted. Combining both claims, we get
    $$\mathcal{L}_{Q_0}(t)\leq C_0\Big(\frac{s}{t}\Big)^\sigma\mathcal{L}_{Q_0}(s)\leq C_0^2\Big(\frac{s}{t}\Big)^\sigma\mathcal{S}_{Q_0}(s)\leq C_0^3\Big(\frac{s}{t}\Big)^{2\sigma}\mathcal{S}_{Q_0}(t),$$
    which proves the theorem with $\Tilde{C}_0=C_0^3>0$ and $\Tilde{\sigma}=2\sigma>0$.
\end{proof}

Combining Lemma \ref{lemmaa} with some of our previous results, we can easily obtain a proof for the main result of this paper as stated in the introduction.

\begin{proof}[Proof of Theorem \ref{teo0}]
    By Proposition \ref{prop5}, (I) is equivalent to \eqref{eq10} for every cube $Q_0$ intersecting $\overline{E}$. Using Lemma \ref{lemmaa}, the latter is equivalent to the condition $|\overline{E}|=0$ and $\mathcal{L}_{Q_0}(t)\leq\frac{\Tilde{C}_0}{t^{\Tilde{\sigma}}}\mathcal{S}_{Q_0}(t)$ for every cube $Q_0$ intersecting $E$, every $t\in(0,1)$ and some constants $\Tilde{\sigma},\Tilde{C}_0>0$. Finally, Theorems \ref{teo3} and \ref{teo4} show this is equivalent to (II).
    
\end{proof}

\section{Further results: weak porosity and a probabilistic approach to pore distribution}
\label{sec6}

In this section, we aim to derive further consequences of the characterizations
of the \(A_p\) sets established beforehand.  First, we introduce the class of
weakly porous sets -known to coincide with $A_1$ sets- and demonstrate that they naturally meet these
characterizations.  We then analyze the distance weights associated to certain
subsets of the real line that have been introduced by other authors in recent
contributions. For this purpose, we use a probabilistic approach
that may prove valuable for future applications involving distance weights and
their associated sets in general.

\subsection{Weakly porous sets as $A_p$ sets}

The concept of weak porosity has been extended to general spaces such as metric spaces and spaces of homogeneous type, but the definition given in \cite{ANDERSON} in the context of Euclidean spaces will be more adequate for our purposes.

\begin{definition}
    \label{def1}
    Let $E \subset\mathbb{R}^n$ be a non-empty set. Then, $E$ is \textbf{weakly porous} if there are constants $0<\sigma,\gamma<1$ such that for all cubes $Q\subset \mathbb{R}^n$ there exist $N=N(Q)\in\mathbb{N}$ and pairwise disjoint $Q_i\in\mathcal{D}(Q)$, $i = 1,\dots,N$, satisfying $Q_i\subset Q\setminus\overline{E}$ with $l(Q_i)\geq\gamma l(\mathcal{M}(Q))$ for all $i=1,\dots,N$ and
    \[
        \sum_{i=1}^N |Q_i|\geq\sigma|Q|.
    \]
    
\end{definition} 

\begin{remark}
    \label{remark1}
    The original definition of weakly porous sets given in \cite{ANDERSON} requires each $Q_i\subset Q\setminus E$ instead of $Q_i\subset Q\setminus\overline{E}$. Nevertheless, since in that same reference it is shown that weak porosity is invariant under topological closure, both definitions can be seen to be equivalent. 
\end{remark}

Weakly porous sets in $\mathbb{R}^n$ have been shown to agree with the collection of sets spanning non-trivial $A_1$ distance weights, as the next theorem due to Anderson et al. states, and which should be compared with Theorem \ref{teo0}. In particular, because of the set inclusion $A_1\subset A_p$ for $p>1$, this means that weakly porous sets are $A_p$ sets, although the converse is not true, as the examples in Subsection \ref{subsec6.2} shows. In the following, we aim to obtain a more direct proof that weakly porous sets are $A_p$ sets by manually verifying \eqref{eq10} holds for any such set, hoping that while doing so, both conditions are shown to be naturally related. Specifically, \eqref{eq10} will be shown to be an even weaker condition than that of Definition \ref{def1}.

\begin{theorem}[\textup{\cite[Theorem 1.1]{ANDERSON}}]
    \label{teo1}
    Let $E\subset\mathbb{R}^n$ be a non-empty set. Then, the next statements are equivalent.
    \begin{itemize}
        \item[(i)] $E$ is weakly porous with  constants $0<\sigma,\gamma<1$;
        \item[(ii)] there exists $\alpha>0$ such that $\dist(\cdot,E)^{-\alpha}\in A_1$.
    \end{itemize}
\end{theorem}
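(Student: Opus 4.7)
The plan is to treat both implications via Lemma~\ref{lemma2}, which (after observing that for $Q_0$ meeting $\overline{E}$, $\essinf_{Q_0}\dist(\cdot,E)^{-\alpha}$ is comparable to $l(\mathcal{M}(Q_0))^{-\alpha}$ up to constants depending only on $n$ and $\alpha$) recasts the $A_1$ condition as
\[
\sum_{Q\in\mathcal{D}_E(Q_0)} l(Q)^{-\alpha}\,\frac{|Q|}{|Q_0|} \;\leq\; C\,l(\mathcal{M}(Q_0))^{-\alpha}
\]
for every cube $Q_0$ meeting $\overline{E}$. Throughout, write $L:=l(\mathcal{M}(Q_0))$.

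For the direction (ii)~$\Rightarrow$~(i), I would fix a parameter $\gamma\in(0,1)$ to be chosen and restrict the displayed sum to pores with $l(Q)\leq \gamma L$, on which $l(Q)^{-\alpha}\geq(\gamma L)^{-\alpha}$. This yields
\[
\sum_{\substack{Q\in\mathcal{D}_E(Q_0)\\ l(Q)\leq \gamma L}} \frac{|Q|}{|Q_0|} \;\leq\; C\gamma^{\alpha}.
\]
Choosing $\gamma$ small enough so that $C\gamma^{\alpha}<1-\sigma$ for a target $\sigma\in(0,1)$, the complementary pores with $l(Q)>\gamma L$ have total mass at least $\sigma|Q_0|$; being pairwise disjoint dyadic subcubes contained in $Q_0\setminus\overline{E}$ with side length at least $\gamma L$, they supply the family required by Definition~\ref{def1}.

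For (i)~$\Rightarrow$~(ii), the strategy is to iterate weak porosity through a stopping-time construction so as to prove an exponential decay estimate for the mass carried by small pores,
\[
\sum_{\substack{Q\in\mathcal{D}_E(Q_0)\\ l(Q)<\gamma^{k}L}} \frac{|Q|}{|Q_0|} \;\lesssim\; (1-\sigma)^{k},
\]
for some $\sigma\in(0,1)$ depending on the weak-porosity parameters. At each level $k$ I would partition the current residual into the maximal dyadic subcubes $R\in\mathcal{D}(Q_0)$ that meet $\overline{E}$ and contain no pore of size $\geq\gamma^{k}L$; weak porosity applied inside each such $R$ (using the grid compatibility $\mathcal{D}_E(R)\subset\mathcal{D}_E(Q_0)$) extracts a $\sigma$-fraction of $|R|$, advancing the iteration. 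Feeding the decay estimate into the pore sum, split into scale shells $\{\gamma^{k+1}L\leq l(Q)<\gamma^{k}L\}$, and using summation by parts gives
\[
\sum_{Q\in\mathcal{D}_E(Q_0)} l(Q)^{-\alpha}\,\frac{|Q|}{|Q_0|} \;\lesssim\; L^{-\alpha}\sum_{k\geq 0}\gamma^{-k\alpha}(1-\sigma)^{k},
\]
which converges provided $\alpha<\log(1-\sigma)^{-1}/\log(1/\gamma)$, establishing the $A_1$ bound for all cubes meeting $\overline{E}$. Cubes disjoint from $\overline{E}$ reduce to the previous case by comparison with a slightly enlarged concentric cube intersecting $\overline{E}$, as in Cases~I and~II of the proof of Theorem~\ref{teo4}.

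The principal technical obstacle will be making the iteration genuinely multiplicative in the pore scale: weak porosity applied to a residual cube $R$ extracts pores of size only $\geq\gamma\,l(\mathcal{M}(R))$, and $l(\mathcal{M}(R))$ is bounded above by $\gamma^{k}L$ but not from below by anything comparable, so the extracted pores may be much smaller than the nominal next scale $\gamma^{k+1}L$. The standard way around this is to choose stopping cubes with a safety factor---selecting maximal $R$ meeting $\overline{E}$ for which $l(R)$ is also controlled from above at the appropriate scale---and to absorb the resulting scale mismatch into the constants $\sigma$ and $\gamma$ in the decay estimate, at the cost of a slightly reduced admissible range of $\alpha$.
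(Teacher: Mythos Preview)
The paper does not supply its own proof of this theorem: it is quoted verbatim from \cite{ANDERSON} as an external result, so there is nothing in the present paper to compare your argument against line by line. That said, your outline is essentially the standard route taken in \cite{ANDERSON}, and both implications are set up correctly via Lemma~\ref{lemma2}.

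One substantive comment on the forward direction (i)$\Rightarrow$(ii). The obstacle you flag---that applying weak porosity to a residual cube $R$ only yields pores of size $\geq\gamma\,l(\mathcal{M}(R))$, with no a~priori lower bound on $l(\mathcal{M}(R))$---is exactly the crux, but your proposed fix via an ad~hoc ``safety factor'' on the stopping cubes is vague and would be awkward to make rigorous. The clean resolution, which the present paper actually reproduces as Lemma~\ref{lemma3} (taken from \cite[Lemma~3.2(ii)]{ANDERSON}), is a structural fact about weakly porous sets: if $Q$ is a dyadic child of $R$ then $|\mathcal{M}(R)|\leq C\,|\mathcal{M}(Q)|$. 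Applied to a residual cube $R\in\mathcal{G}_\gamma(Q_0)$ whose parent $\pi R$ contains a pore $P$ with $l(P)\geq\gamma L$, this gives $l(\mathcal{M}(R))\gtrsim\gamma L$, so the pores extracted at the next step genuinely live at scale $\gtrsim\gamma^2 L$, and the iteration becomes honestly geometric. This is precisely the mechanism used in the proof of Theorem~\ref{teo2} in the paper, and it is what makes the exponential decay estimate go through without fuss. I would recommend invoking Lemma~\ref{lemma3} explicitly rather than trying to engineer around it.
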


The reason behind the strong relationship between weak porosity and $A_1$ distance weights may not be easy to recognize, but it primarily stems from the fact that the distribution of pores forming the complement of $Q\setminus\overline{E}$ depends heavily on the side length of $\mathcal{M}(Q)$ when $E$ is weakly porous. This is because, despite its relative simplicity, Definition \ref{def1} can be applied iteratively to a cube $Q$ and some of its subcubes in order to deduce this global behavior. In fact, this kind of iterative argument will be used next to show that, by choosing a smaller value for the parameter $\gamma$ in the definition of weak porosity, one can obtain an improvement in the value of $\sigma$ by a fixed amount. Before this, we reproduce a technical lemma that will be needed for that purpose.

\begin{lemma}[\textup{\cite[Lemma 3.2 (ii)]{ANDERSON}}]
    \label{lemma3}
    Let $E$ be a weakly porous set with constants $0<\sigma,\gamma<1$. Then, there exists a constant $C=C(\sigma,\gamma,n)>0$ such that if $Q,R\subset\mathbb{R}^n$ are two cubes satisfying $Q\subset R$ with $|R|=2^n|Q|$, then
    \begin{equation*}
        |\mathcal{M}(R)|\leq C|\mathcal{M}(Q)|.
    \end{equation*}
    In particular, the above inequality holds whenever $Q\in\mathcal{D}^1(R)$, this is, when $Q$ is a dyadic child of $R$.
\end{lemma}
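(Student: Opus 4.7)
The plan is to compare the sizes of the largest dyadic pores of $R$ and $Q$ via weak porosity. Set $L := l(\mathcal{M}(R))$ and $\ell := l(\mathcal{M}(Q))$; the target is $L \leq C\ell$ with $C = C(\sigma,\gamma,n)$. I focus first on the dyadic-child case $Q \in \mathcal{D}^1(R)$, which is the content of the \emph{in particular} assertion. The general case then follows by switching to a dyadic grid adapted to $Q$, using the translation- and dilation-invariance of weak porosity. Throughout, assume both $R$ and $Q$ intersect $E$, otherwise the statement is trivially interpreted.

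When $Q \in \mathcal{D}^1(R)$, the dyadic systems of $Q$ and $R$ are compatible, so $\mathcal{D}_E(Q) \subset \mathcal{D}_E(R)$ and in particular $\ell \leq L$ automatically. If moreover $\mathcal{M}(R) \subset Q$, then $\mathcal{M}(R)$ already belongs to $\mathcal{D}_E(Q)$, giving $\ell = L$. The nontrivial case is thus $\mathcal{M}(R) \not\subset Q$. In this setting the plan is to apply weak porosity to $R$ to produce disjoint dyadic pore cubes $R_1,\ldots,R_N \subset R$ of side $\geq \gamma L$, disjoint from $\overline{E}$ and with $\sum|R_i| \geq \sigma|R|$. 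If some $R_i \subset Q$, then ascending in $\mathcal{D}(Q)$ from $R_i$ must reach a cube whose parent meets $\overline{E}$ before hitting $Q$ itself, producing a member of $\mathcal{D}_E(Q)$ of side $\geq l(R_i) \geq \gamma L$, and hence $\ell \geq \gamma L$ with $C = \gamma^{-n}$.

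The obstruction is the case in which every $R_i$ is disjoint from $Q$: since $|R\setminus Q| = (1 - 2^{-n})|R|$, this forces $\sigma \leq 1 - 2^{-n}$, so the naive argument fails whenever $\sigma$ is small. The plan for this regime is to iterate the weak porosity condition across the dyadic tree of $R$ in order to boost the effective coverage past $1 - 2^{-n}$. Concretely, once the first batch $\{R_i\}$ is extracted, decompose the uncovered region into its maximal dyadic subcubes and reapply weak porosity to each one meeting $E$; after $K$ rounds this secures total pore coverage of at least $(1 - (1-\sigma)^K)|R|$, and choosing $K$ with $(1-\sigma)^K < 2^{-n}$ forces at least one pore inside $Q$.

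The main obstacle is scale control across this iteration: weak porosity applied to a sub-cube $S \subset R$ produces pores of side $\geq \gamma\,l(\mathcal{M}(S))$, and $l(\mathcal{M}(S))$ may be much smaller than $L$ in general. To address this I would keep track at each round only of those iteration sub-cubes whose $\mathcal{M}$ is of order $L$---for instance, sub-cubes containing a dyadic ancestor of $\mathcal{M}(R)$ or of a previously selected pore, for which dyadic compatibility forces $l(\mathcal{M}(S))$ to be comparable to $L$---and handle the remaining sub-cubes by separate combinatorial accounting. This scale bookkeeping is the delicate point of the proof, and is the main place where the fine structure of weakly porous sets enters beyond the naive application of Definition \ref{def1}.
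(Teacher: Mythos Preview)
The paper does not prove this lemma: it is quoted from \cite[Lemma~3.2~(ii)]{ANDERSON} and used as a black box in the proof of Theorem~\ref{teo2}. There is therefore no proof in the present paper to compare your proposal against.

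That said, your sketch has a genuine gap in the iteration step. The mechanism you describe---extract pores, pass to the maximal dyadic cubes of the uncovered region, and repeat until the covered fraction exceeds $1-2^{-n}$---is exactly the iteration carried out in Theorem~\ref{teo2}, and if you inspect that proof you will see that the scale control it needs (the line ``$|\mathcal{M}(R)|\geq C(\sigma,\gamma,n)|\mathcal{M}(\pi R)|$'' for $R\in\mathcal{G}_\gamma(Q_0)$) is obtained precisely by \emph{invoking} Lemma~\ref{lemma3}. So the iteration, as you set it up, presupposes the very estimate you are trying to establish. Your proposed workaround---tracking those iteration sub-cubes that contain a dyadic ancestor of $\mathcal{M}(R)$ or of a previously selected pore---does not rescue the argument: the complementary cubes at each stage are, by construction, disjoint from every pore selected so far (in particular from $\mathcal{M}(R)$, which lies in $\mathcal{F}_\gamma(R)$), hence none of them contains any such ancestor. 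The family you propose to track is therefore empty, and every iteration cube falls into the ``remaining sub-cubes'' bin for which you give no argument. The reduction of the general case to the dyadic-child case via ``translation- and dilation-invariance'' is also not as immediate as you suggest, since the dyadic grids of $Q$ and $R$ need not be compatible when $Q\notin\mathcal{D}^1(R)$; but this is a secondary issue next to the circularity above.
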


\begin{theorem}
    \label{teo2}
    Let $E$ be a closed weakly porous set with constants $0<\sigma,\gamma<1$.
    \begin{itemize}
        \item[(i)] There exists $0<\widehat{\gamma}<\gamma$ depending on both $\sigma$ and $\gamma$ such that $E$ is weakly porous with constants $0<\widehat{\sigma},\widehat{\gamma}<1$, where $\widehat{\sigma}=1-(1-\sigma)^2>\sigma$.
        \item[(ii)] Let $\sigma^*$ be any number in between $0$ and $1$. Then, there exists $0<\gamma^*\leq\gamma$ with $\gamma^*=\gamma^*(\sigma^*,\sigma,\gamma)$ such that $E$ is weakly porous with constants $0<\sigma^*,\gamma^*<1$.
    \end{itemize}
\end{theorem}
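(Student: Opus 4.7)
The plan is to prove (i) by iterating the weak porosity condition twice---first applied to the whole cube $Q$, and then to each dyadic piece of the leftover region---and then to obtain (ii) by iterating (i) finitely many times.

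For (i), fix a cube $Q$ with $Q\cap\overline{E}\neq\emptyset$ (the case $Q\cap\overline{E}=\emptyset$ being trivial by taking $Q_1=Q$). Apply weak porosity with constants $(\sigma,\gamma)$ to obtain pairwise disjoint dyadic pores $Q_1,\dots,Q_N$ with $l(Q_i)\geq\gamma\,l(\mathcal{M}(Q))$ and $\sum_i|Q_i|\geq\sigma|Q|$. Write $Q\setminus\bigcup_i Q_i=\bigsqcup_j R_j$ as the disjoint union of the maximal dyadic subcubes $R_j$ of $Q$ that miss all $Q_i$'s. The dyadic grid structure forces each $\pi R_j$ to contain at least one $Q_i$, and since $R_j\cap Q_i=\emptyset$, that $Q_i$ equals a sibling of $R_j$ or sits strictly below such a sibling, giving $l(R_j)\geq l(Q_i)\geq\gamma\,l(\mathcal{M}(Q))$. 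Now split the $R_j$'s into two classes: in class A, $R_j\cap\overline{E}=\emptyset$ and $R_j$ itself is added to the new pore family; in class B, $R_j\cap\overline{E}\neq\emptyset$, and we apply weak porosity once more to $R_j$ to produce sub-pores $Q_{j,i}$ with $l(Q_{j,i})\geq\gamma\,l(\mathcal{M}(R_j))$ and $\sum_i|Q_{j,i}|\geq\sigma|R_j|$.

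The crucial step is the side-length bound in class B. Since $\pi R_j\cap\overline{E}\neq\emptyset$ as well, $\mathcal{M}(\pi R_j)$ is defined, and the cube $Q_i\subset\pi R_j$ identified above, being a dyadic subcube of $\pi R_j$ disjoint from $\overline{E}$, is contained in a maximal such ancestor belonging to $\mathcal{D}_E(\pi R_j)$ of side length at least $l(Q_i)\geq\gamma\,l(\mathcal{M}(Q))$; hence $l(\mathcal{M}(\pi R_j))\geq\gamma\,l(\mathcal{M}(Q))$. Invoking Lemma~\ref{lemma3} for $R_j\in\mathcal{D}^1(\pi R_j)$ yields $l(\mathcal{M}(R_j))\geq C^{-1/n}l(\mathcal{M}(\pi R_j))\geq C^{-1/n}\gamma\,l(\mathcal{M}(Q))$, and therefore $l(Q_{j,i})\geq C^{-1/n}\gamma^{2}\,l(\mathcal{M}(Q))$. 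Setting $\widehat{\gamma}:=C^{-1/n}\gamma^2$, which lies strictly below $\gamma$ since $C\geq 1$ and $\gamma<1$, the new family meets the side-length requirement in all cases. Pairwise disjointness is immediate from the construction, and if $\tau:=|Q|^{-1}\sum_i|Q_i|\geq\sigma$, the total pore measure is at least $\tau|Q|+\sigma(1-\tau)|Q|=(\sigma+\tau(1-\sigma))|Q|\geq(\sigma+\sigma(1-\sigma))|Q|=(1-(1-\sigma)^2)|Q|=\widehat{\sigma}|Q|$, as required.

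Part (ii) follows by iterating (i): setting $(\sigma_0,\gamma_0):=(\sigma,\gamma)$ and $(\sigma_{k+1},\gamma_{k+1}):=(\widehat{\sigma}_k,\widehat{\gamma}_k)$, the recursion $1-\sigma_{k+1}=(1-\sigma_k)^2$ gives $1-\sigma_k=(1-\sigma)^{2^k}\to 0$. For any $\sigma^*\in(0,1)$, choose $k$ with $\sigma_k\geq\sigma^*$; since weak porosity with $(\sigma_k,\gamma_k)$ trivially implies the same with $(\sigma^*,\gamma_k)$, setting $\gamma^*:=\gamma_k\leq\gamma$ completes the argument. The main obstacle is the class B side-length estimate of part (i): relating $l(\mathcal{M}(R_j))$ to $l(\mathcal{M}(Q))$ requires the careful use of Lemma~\ref{lemma3} together with the identification of a suitably large pore inside $\pi R_j$; everything else reduces to combinatorial bookkeeping.
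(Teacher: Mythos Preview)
Your proof is correct and follows essentially the same strategy as the paper's: iterate weak porosity once on the leftover region, invoke Lemma~\ref{lemma3} to control $l(\mathcal{M}(R_j))$ against $l(\mathcal{M}(Q))$, and then iterate part~(i) to get~(ii). The only cosmetic difference is that the paper works with the canonical family $\mathcal{F}_\gamma(Q_0)=\{Q\in\mathcal{D}_E(Q_0):l(Q)\geq\gamma\,l(\mathcal{M}(Q_0))\}$ rather than an arbitrary selection $Q_1,\dots,Q_N$; this forces every maximal complementary cube to intersect $E$ automatically, so the paper avoids your class~A/B split.
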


\begin{proof}
    To prove (i), assume $E$ is as in the statement and let $Q_0\subset\mathbb{R}^n$ be some cube. Let us observe that the weak porosity condition is trivially fulfilled for any choice of parameters whenever $Q_0\cap E=\emptyset$, so assume that $Q_0$ intersects $E$. Fix $t\in(0,1)$ and consider the collection
    $$\mathcal{F}_t(Q_0)=\{Q\in\mathcal{D}_E(Q_0):l(Q)\geq t l(\mathcal{M}(Q_0))\}$$
    and its complementary family
    $$\mathcal{G}_t(Q_0)=\{Q\in\mathcal{D}(Q_0):Q\cap Q'=\emptyset\text{ for every } Q'\in\mathcal{F}_t(Q_0)\land \pi Q\cap Q'\neq\emptyset\text{ for some }Q'\in\mathcal{F}_t(Q_0)\}.$$
    These two families are mutually disjoint and their union is equal to $Q_0$ for any choice of $t$. Moreover, $l(Q)\geq tl(\mathcal{M}(Q_0))$ and $Q\cap E\neq\emptyset$ for every $Q\in\mathcal{G}_t(Q_0)$. It is easy to verify (see \cite{ANDERSON}) that our hypothesis of $E$ being weakly porous with constants $0<\sigma,\gamma<1$ is equivalent to the condition
    \begin{equation}
        \label{eq8}
        \sum_{Q\in\mathcal{F}_\gamma(Q')}|Q|\geq\sigma|Q'|,\text{ for every cube }Q'\text{ intersecting }E.
    \end{equation}
    We can take advantage of \eqref{eq8} by considering the following sums.
    \begin{align*}
        \sum_{Q\in\mathcal{F}_\gamma(Q_0)}|Q|+\sum_{R\in\mathcal{G}_\gamma(Q_0)}\sum_{Q\in\mathcal{F}_\gamma(R)}|Q|&\geq\sum_{Q\in\mathcal{F}_\gamma(Q_0)}|Q|+\sigma\sum_{R\in\mathcal{G}_\gamma(Q_0)}|R| \\
        &=\sum_{Q\in\mathcal{F}_\gamma(Q_0)}|Q|+\sigma\Big( |Q_0|-\sum_{Q\in\mathcal{F}_\gamma(Q_0)}|Q| \Big) \\
        &=(1-\sigma)\sum_{Q\in\mathcal{F}_\gamma(Q_0)}|Q|+\sigma|Q_0| \\
        &\geq\sigma(1-\sigma)|Q_0|+\sigma|Q_0| \\
        &=\sigma(2-\sigma)|Q_0| \\
        &=(1-(1-\sigma)^2)|Q_0|.
    \end{align*}
    
    On the other hand, if $R\in\mathcal{G}_\gamma(Q_0)$, then there exist $P\in\mathcal{G}_\gamma(Q_0)$ such that $\pi R\cap P\neq\emptyset$, which implies $P\subset\pi R$ (because $P\cap E=\emptyset$ while $R\cap E\neq\emptyset$). But then by Lemma \ref{lemma3}
    $$|\mathcal{M}(R)|\geq C(\sigma,\gamma,n)|\mathcal{M}(\pi R)|\geq C(\sigma,\gamma,n)|P|\geq C(\sigma,\gamma,n)|\mathcal{M}(Q_0)|.$$
    Thus, each $Q\in\mathcal{F}_\gamma(R)$ satisfies
    $$|Q|\geq\gamma^n|\mathcal{M}(R)|\geq\gamma^n C(\sigma,\gamma,n)|\mathcal{M}(Q_0)|=:(\widehat{\gamma})^n|\mathcal{M}(Q_0)|,$$
    implying that
    $$\Big(\bigcup_{Q\in\mathcal{F}_\gamma(Q_0)}Q\Big)\cup\Big(\bigcup_{R\in\mathcal{G}_\gamma(Q_0)}\bigcup_{Q\in\mathcal{F}_\gamma(R)}Q\Big)\subset\bigcup_{F_{\widehat{\gamma}}(Q_0)}Q$$
    and therefore $\sum_{F_{\widehat{\gamma}}(Q_0)}|Q|\geq (1-(1-\sigma)^2)|Q_0|$. Since the cube $Q_0$ intersecting $E$ was arbitrarily chosen, it follows that $E$ is weakly porous with constants $\widehat{\sigma}=1-(1-\sigma)^2$ and $\widehat{\gamma}$.
    
    To see (ii), fix $0<\sigma^*<1$ and let $\sigma_0=\sigma$ and $\gamma_0=\gamma$. If $\sigma^*\leq\sigma_0$, then it suffices to take $\gamma^*=\gamma_0$. If $\sigma^*>\sigma_0$, proceed to define, for each $k\in\mathbb{N}$, $\sigma_k=\widehat{\sigma}_{k-1}=1-(1-\sigma_{k-1})^2$ and $\gamma_k=\widehat{\gamma}_{k-1}$. This is, the pair $(\sigma_k,\gamma_k)$ is obtained by applying (i) to the weakly porous set $E$ having constants $(\sigma_{k-1},\gamma_{k-1})$. Defining $\eta_k=1-\sigma_k$, observe that the recursive relation for this quantity becomes $\eta_k=\eta_{k-1}^2=\eta_{k-2}^{2^2}=\hdots=\eta_0^{2^k}$. This means $\sigma_k=1-\eta_k=1-\eta_0^{2^k}=1-(1-\sigma)^{2^k}$. Choose $K=K(\sigma^*,\sigma)\in\mathbb{N}_0$ big enough as to make $\sigma_K\geq\sigma^*$. Since $E$ is weakly porous with constants $0<\sigma_K,\gamma_K<1$, then this also holds for the constants $0<\sigma^*,\gamma^*<1$ instead, where $\gamma^*=\gamma_K<\hdots<\gamma_0=\gamma$.
\end{proof}

Using this improving technique on the parameter $\sigma$ of a weakly porous set $E$, we can easily prove that \eqref{eq10} must hold in this case for an appropriate value of $C_0$.

\begin{theorem}
    \label{teo6}
    Let $E\subset\mathbb{R}^n$ be a closed weakly porous set with constants $0<\sigma,\gamma<1$ and let $s\in(0,\frac{1}{1+2^n})$. Then, there exists a constant $C_0>0$ dependent on $n$, $s$, $\sigma$ and $\gamma$ such that $\mathcal{L}_{Q_0}(s)\leq C_0\:\mathcal{S}_{Q_0}(s)$ for every cube $Q_0$ intersecting $E$.
\end{theorem}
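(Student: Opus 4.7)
The plan is to reduce everything to a bound relating $\mathcal{S}_{Q_0}(s)$ to $l(\mathcal{M}(Q_0))$, since the companion bound $\mathcal{L}_{Q_0}(s)\le l(\mathcal{M}(Q_0))$ is already furnished by Proposition~\ref{prop3}~(i). Concretely, if I can show that
\[
   \mathcal{S}_{Q_0}(s)\ \ge\ \gamma^{*}\, l(\mathcal{M}(Q_0))
\]
for some $\gamma^*=\gamma^*(n,s,\sigma,\gamma)>0$, then I immediately obtain $\mathcal{L}_{Q_0}(s)\le(\gamma^{*})^{-1}\mathcal{S}_{Q_0}(s)$, which is exactly the desired conclusion with $C_0:=(\gamma^*)^{-1}$.

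The key input for producing such a $\gamma^*$ is Theorem~\ref{teo2}~(ii), which allows us to upgrade the weak porosity constant $\sigma$ to any value in $(0,1)$ at the price of shrinking $\gamma$. Since $s\in(0,\tfrac{1}{1+2^{n}})\subset(0,1)$, I would pick any $\sigma^{*}\in(1-s,1)$ and apply Theorem~\ref{teo2}~(ii) to obtain a constant $\gamma^{*}=\gamma^{*}(\sigma^{*},\sigma,\gamma)>0$ for which $E$ is weakly porous with constants $(\sigma^{*},\gamma^{*})$. Because $\sigma^{*}$ depends only on $s$, the parameter $\gamma^{*}$ ultimately depends only on $n$, $s$, $\sigma$ and $\gamma$, as required.

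Next, I would invoke the reformulation of weak porosity given in \eqref{eq8}: for every cube $Q_0$ intersecting $E$,
\[
   \sum_{Q\in\mathcal{F}_{\gamma^{*}}(Q_{0})}|Q|\ \ge\ \sigma^{*}|Q_{0}|,
\]
equivalently,
\[
   \sum_{\substack{Q\in\mathcal{D}_{E}(Q_{0})\\ l(Q)<\gamma^{*}l(\mathcal{M}(Q_{0}))}}|Q|\ \le\ (1-\sigma^{*})|Q_{0}|\ <\ s\,|Q_{0}|.
\]
Therefore, for any $L<\gamma^{*}l(\mathcal{M}(Q_{0}))$ the condition $l(Q)\le L$ forces $l(Q)<\gamma^{*}l(\mathcal{M}(Q_{0}))$, and the previous display gives
\[
   \sum_{\substack{Q\in\mathcal{D}_{E}(Q_{0})\\ l(Q)\le L}}\frac{|Q|}{|Q_{0}|}\ <\ s,
\]
so $L\notin\mathscr{S}(s,Q_{0},E)$. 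By definition $\mathcal{S}_{Q_{0}}(s)=\inf\mathscr{S}(s,Q_{0},E)\ge\gamma^{*}l(\mathcal{M}(Q_{0}))$, which combined with $\mathcal{L}_{Q_{0}}(s)\le l(\mathcal{M}(Q_{0}))$ closes the argument with $C_{0}=(\gamma^{*})^{-1}$.

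I do not expect any serious obstacle: the core trick is recognizing that, although a direct application of \eqref{eq8} only yields the bound when $\sigma>1-s$, Theorem~\ref{teo2}~(ii) is tailor-made to bring us into this regime. The hypothesis $s<(1+2^{n})^{-1}$ plays no role beyond ensuring compatibility with the statement of Theorem~\ref{teo5}; any $s\in(0,1)$ would permit the same scheme since one always has $1-s<1$.
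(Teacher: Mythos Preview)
Your proposal is correct and follows essentially the same route as the paper: both apply Theorem~\ref{teo2}~(ii) to upgrade the porosity constant to some $\sigma^{*}>1-s$, deduce $\mathcal{S}_{Q_0}(s)\ge\gamma^{*}l(\mathcal{M}(Q_0))$ from the reformulation \eqref{eq8}, and combine this with $\mathcal{L}_{Q_0}(s)\le l(\mathcal{M}(Q_0))$ to obtain $C_0=(\gamma^{*})^{-1}$. Your closing remark that the restriction $s<(1+2^n)^{-1}$ is not actually used in this argument is also accurate.
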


\begin{proof}
    Use Theorem \ref{teo2} to choose $0<\sigma^*,\gamma^*<1$ such that $E$ is weakly porous with said constants and $1-\sigma^*<s$. Pick $\varepsilon>0$ and a cube $Q_0$ intersecting $E$ and observe that
    \begin{align*}
        \sum_{\substack{Q\in\mathcal{D}_E(Q_0)\\l(Q)\leq\gamma^* l(\mathcal{M}(Q_0))-\varepsilon}}|Q|&\leq\sum_{\substack{Q\in\mathcal{D}_E(Q_0)\\l(Q)<\gamma^* l(\mathcal{M}(Q_0))}}|Q| \\
        &=|Q_0|-\sum_{\substack{Q\in\mathcal{D}_E(Q_0)\\l(Q)\geq\gamma^*l(\mathcal{M}(Q_0))}}|Q| \\
        &\leq(1-\sigma^*)|Q_0| \\
        &<s|Q_0|.
    \end{align*}
    Thus, $\mathcal{S}_{Q_0}(s)>\gamma^* l(\mathcal{M}(Q_0))-\varepsilon$, for every $\varepsilon>0$, meaning that $\mathcal{S}_{Q_0}(s)\geq\gamma^* l(\mathcal{M}(Q_0))$. But then
    $$\mathcal{L}_{Q_0}(s)\leq l(\mathcal{M}(Q_0))\leq\frac{1}{\gamma^*}\mathcal{S}_{Q_0}(s),$$
    so the theorem follows by taking $C_0:=(\gamma^*)^{-1}$.
\end{proof}

\subsection{A probabilistic approach describing pore distribution}
\label{subsec6.2}

In this subsection we consider the distance weights associated with two subsets of the real line, introduced in the works of Anderson et al. \cite{ANDERSON} and Mudarra \cite{MUDARRA}, with the goal of presenting a probabilistic approach that may be useful in order to translate theory into practice. Both sets can be obtained through a same process involving the performance of a sequence of translations, contractions and reflections on the starting two-point set $\{0,1\}$ as is described next.

Given a sequence of ``contractions'' $\{c_n\}_{n\in\mathbb{N}}\subset(0,1]$, let
$$E(\{c_n\})=E^-(\{c_n\})\cup E^+(\{c_n\})$$
where $E^-(\{c_n\})=-E^+(\{c_n\})$ is the reflection of $E^+(\{c_n\})=\bigcup_{n\in\mathbb{N}_0}E_n$ through the origin and the sets $E_n$ used to construct the latter are defined as follows. Set $E_0:=\{0,1\}$ and, for each $n\geq1$, $E_n:=E_{n-1}\cup E^1_{n-1}\cup E^2_{n-1}$ where $E^1_{n-1}$ is a translation of $E_{n-1}$ contracted by $c_n$ and whose first point is the last point of $E_{n-1}$ and $E^2_{n-1}$ is a translation of $E_{n-1}$ whose first point is the last point of $E^1_{n-1}$.

\begin{figure}[b]
	\centering
	\includegraphics[width=0.9\textwidth]{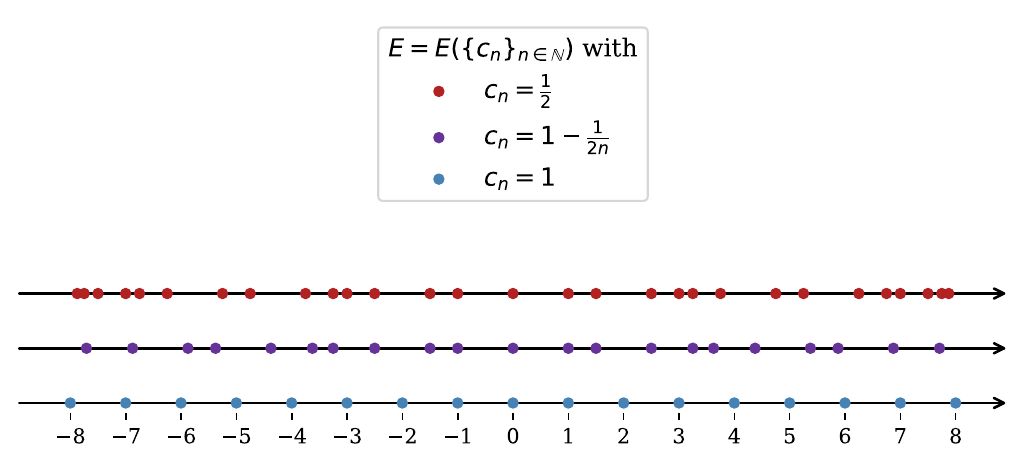}
	\caption{Points of the set $E = E(\{c_n\})$ contained in the interval $[-8, 8]$, corresponding to the sequences (from bottom to top) $c_n = 1$, $c_n = 1 - \frac{1}{2n}$, and $c_n=\frac{1}{2}$.}
	\label{fig1}
\end{figure}

Clearly, the nature of $E(\{c_n\})$ heavily depend on the behavior of the sequence of contractions considered in its construction. In particular, the constant sequence $c_n=1$ gives rise to the set $E(\{1\})=\mathbb{Z}$, which happens to be a weakly porous set \cite{ANDERSON}. This example is represented in Figure \ref{fig1} along with two others, constructed using the sequences $c_n=1-\frac{1}{2n}$ and $c_n=\frac{1}{2}$ instead. The set $E(\{1-\frac{1}{2n}\})$ was shown to satisfy $\dist(\cdot,E(\{1-\frac{1}{2n}\}))^{-\alpha}\in A_p\setminus A_1$ for every $0<\alpha<1$ and every $1<p<\infty$ in \cite{ANDERSON}, while $\dist(\cdot,E(\{\frac{1}{2}\}))^{-\alpha}$ was shown to be a doubling weight which is not contained in $\bigcup_{p\geq1}A_p$ for any choice of $0<\alpha<1$ in \cite{MUDARRA}. This shows that the sets $E(\{c_n\})$ are particularly useful for constructing examples of $A_1$, $A_p$ or merely doubling distance weights depending on the choice of the sequence $\{c_n\}$, resulting on more regular weights as the sequence approaches one more rapidly.

Here, we will restrict ourselves to the cases $c_n=1-\frac{1}{2}$ and $c_n=\frac{1}{2}$ to demonstrate how the theory developed in previous sections agree with the expected properties of the sets $E(\{c_n\})$. More precisely, if for each $n\in\mathbb{N}$, $Q_n$ denote the smallest cube (interval) in $\mathbb{R}$ containing the set $E_n$ (of course, the definition of each $Q_n$ depend upon the sequence $\{c_n\}$), the next statement can be proved.

\begin{proposition}
    \label{prop6}
    Let $E=E(\{c_n\}_{n\in\mathbb{N}})$ where $\{c_n\}_{n\in\mathbb{N}}$ is some sequence of contractions $0<c_n<1$.
    \begin{itemize}
        \item[(i)] If the sequence of contractions is given by $c_n=1-\frac{1}{2n}$, then there exist $C_0>0$ and $s\in(0,\frac{1}{1+2^n})$ such that
        $$\mathcal{L}_{Q_n}(s)\leq C_0\:\mathcal{S}_{Q_n}(s)\text{ for every }n\in\mathbb{N}_0.$$
        \item[(ii)] If the sequence of contractions is given by $c_n=\frac{1}{2}$, then for every $t\in(0,\frac{1}{8})$ we have
        $$\frac{\mathcal{L}_{Q_n}(t)}{\mathcal{S}_{Q_n}(t)}\to\infty\text{ as }n\to\infty.$$
    \end{itemize}
\end{proposition}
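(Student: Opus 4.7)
The plan is to reformulate the quantiles $\mathcal{L}_{Q_n}$ and $\mathcal{S}_{Q_n}$ probabilistically, exploiting the self-similar construction of $E_n$ to obtain an explicit product-measure description of the gap lengths, and then to transfer the resulting concentration statements to the dyadic pore measure. A direct induction shows that the gaps of $E_n \cap Q_n$ are indexed by subsets $S \subseteq \{1,\ldots,n\}$, where $i \in S$ records the steps at which the middle (contracted) copy was chosen: the gap labelled $S$ has length $g_S = \prod_{i \in S} c_i$ and multiplicity $2^{n-|S|}$, consistently with $\sum_S 2^{n-|S|}g_S = \prod_{i=1}^n (2+c_i) = \ell_n$. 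Weighting each gap by its length gives the probability measure $\nu_n(G) := |G|/\ell_n$, under which $S$ has the product law $\bigotimes_{i=1}^n \mathrm{Ber}(p_i)$ with $p_i := c_i/(2+c_i)$. Letting $X_i$ denote these independent Bernoullis,
\[
\log|G| \;=\; \sum_{i=1}^n X_i \log c_i,
\]
with mean $\mu_n := \sum_i p_i \log c_i$ and variance $\sigma_n^2 := \sum_i p_i(1-p_i)(\log c_i)^2$. The crux is a sharp dichotomy: for $c_i=1-1/(2i)$, $|\log c_i|\sim 1/(2i)$ forces $\sigma_n^2 \leq C\sum_i i^{-2}$ uniformly bounded in $n$; whereas for $c_i=1/2$, $\sigma_n^2 = n\cdot(4/25)(\log 2)^2$ grows linearly in $n$.

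\textbf{Gap-to-pore transfer.} Each pore $Q\in\mathcal{D}_E(Q_n)$ sits inside a unique gap $G(Q)$, and the pores contained in a gap partition it. A routine analysis of the dyadic Whitney-type cover of a subinterval of $\mathbb{R}$ yields universal constants $0<c_1\leq c_2<1$ such that, conditionally on $G$, at least half of the pore mass $\mu(Q):=|Q|/\ell_n$ inside $G$ is carried by pores of side length $\geq c_1|G|$, and at least half by pores of side length $\leq c_2|G|$; one also gets a geometric decay $\mathbb{P}_\mu(l(Q) \leq x \mid G) \leq C\, x/|G|$ for $0\leq x\leq|G|$. Since $\sum_{Q \subset G}|Q|=|G|$, the pushforward of $\mu$ by $Q\mapsto G(Q)$ coincides with $\nu_n$, and integration produces the quantile comparisons
\[
c_1\mathcal{L}^{\mathrm{gap}}_n(2t) \leq \mathcal{L}_{Q_n}(t) \leq \mathcal{L}^{\mathrm{gap}}_n(t), \qquad c\, t\,\mathcal{S}^{\mathrm{gap}}_n(t/2) \leq \mathcal{S}_{Q_n}(t) \leq c_2\mathcal{S}^{\mathrm{gap}}_n(2t),
\]
where $\mathcal{L}^{\mathrm{gap}}_n,\mathcal{S}^{\mathrm{gap}}_n$ denote the analogous quantiles of $|G|$ under $\nu_n$.

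\textbf{Conclusion and main obstacle.} For (i), the uniform bound on $\sigma_n^2$ combined with Chebyshev's inequality places $\mathcal{L}^{\mathrm{gap}}_n(s)$ and $\mathcal{S}^{\mathrm{gap}}_n(s/2)$ inside an interval $[e^{\mu_n-\lambda_s},e^{\mu_n+\lambda_s}]$ with $\lambda_s$ independent of $n$; choosing $s$ sufficiently small (and less than $1/3 = 1/(1+2^1)$) then gives $\mathcal{L}_{Q_n}(s)/\mathcal{S}_{Q_n}(s) \leq (1/cs)e^{\lambda_s+\lambda_{s/2}} =: C_0$ uniformly in $n$. For (ii), since $|S| \sim \mathrm{Bin}(n,1/5)$, the central limit theorem yields, for every fixed $t' \in (0, 1/2)$, $\mathcal{L}^{\mathrm{gap}}_n(t') = 2^{-n/5 + \Theta(\sqrt{n})}$ and $\mathcal{S}^{\mathrm{gap}}_n(t') = 2^{-n/5 - \Theta(\sqrt{n})}$; the factor-$2$ loss in the quantile level from the transfer is absorbed by the restriction $t<1/8$ (so that $2t<1/4<1/2$ remains safely in the left tail), and the left transfer inequalities then give $\mathcal{L}_{Q_n}(t)/\mathcal{S}_{Q_n}(t) \geq (c_1/c_2)\,2^{\Theta(\sqrt{n})} \to \infty$. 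The main obstacle is the gap-to-pore transfer: one must verify the Whitney-type bounds with constants uniform across all gaps of $E_n$ regardless of their alignment with the dyadic grid of $Q_n$, and carefully track the loss in quantile level. Once this is in hand, the probabilistic core reduces to standard concentration estimates---Chebyshev for (i), the central limit theorem for (ii)---applied to explicit sums of independent Bernoulli variables.
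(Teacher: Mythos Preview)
Your plan coincides with the paper's: both introduce the size-biased gap distribution (the paper via the random variable $X_Q$ on $(Q,P_Q)$, you via the explicit product--Bernoulli structure---these are the same object), establish a gap-to-dyadic-pore transfer (the paper's Lemma~6.5), and then separate (i) from (ii) by a concentration/dispersion dichotomy for $\log|G|$. For (i) you use Chebyshev on $\log|G|$ with $\sigma_n^2\le C\sum_i i^{-2}$ bounded; the paper instead applies Markov to $|G|$ and $|G|^{-1}$ after computing $\mathbb{E}|G|\cdot\mathbb{E}|G|^{-1}=\prod_i 3(2+c_i^2)/(2+c_i)^2$ and bounding this product. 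For (ii) you invoke the CLT where the paper uses Berry--Esseen; for the bare conclusion $\to\infty$ the CLT already suffices.

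One inaccuracy in your transfer step: the claim that a universal $c_2<1$ exists with ``at least half the pore mass inside $G$ carried by pores of side length $\le c_2|G|$'' is false---a gap that barely exceeds a dyadic interval it contains can have a single pore occupying a fraction of $|G|$ arbitrarily close to $1$. This does not damage the argument, since for the upper bound on $\mathcal{S}_{Q_n}$ needed in (ii) you may simply use the trivial inequality $\mathcal{S}_{Q_n}(t)\le \mathcal{S}^{\mathrm{gap}}_n(t)$ (every pore contained in a gap of length $\le L$ has side length $\le L$), which is exactly what the paper records as $\mathcal{S}_{Q_0}(t)\le\widetilde{\mathcal{S}}_{Q_0}(t)$.
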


Item (ii) above implies that $E(\{\frac{1}{2}\})$ is not an $A_p$ set, since \eqref{eq7} cannot hold for any choice of $\sigma,C_0>0$, which agrees with the previous discussion. On the other hand, recall that the set considered in (i) does admit distance weights in $A_p$ for every $1<p<\infty$, but not in $A_1$. In particular, this set is not weakly porous but it should satisfy \eqref{eq10} for every cube $Q_0$ intersecting it. Naturally, (i) is a weaker statement than this condition, and extending it to every cube intersecting the set $E(\{1-\frac{1}{2n}\})$ would require a great deal of additional and technical results. Instead, the goal here is to merely prove \eqref{eq10} for the sequence of cubes $Q_n$ that, in some sense, constitute the structural core of the set in question. The tools to be used in the proof of Proposition \ref{prop6} are interesting in their own right, as they bridge the theory from previous sections with elementary probability concepts that could be suitable for a more practical viewpoint.

Let $E$ be a closed non-empty set in the real line and let $Q_0\subset\mathbb{R}$ be a cube. The collection
$$\mathcal{C}_E(Q_0)=\{I:I\text{ is a connected component of }Q_0^\circ\setminus E\}$$
consists of all the open intervals $I$ that happen to be the connected components of $Q_0^\circ\setminus E$, in contrast with $\mathcal{D}_E(Q_0)$ which consisted of dyadic intervals. Here, we use the notation $A^\circ$ to denote the interior of a set $A\subset\mathbb{R}$. Using this new collection, we can define analogs to the functions $\mathcal{L}_{Q_0}$ and $\mathcal{S}_{Q_0}$: for any $0<t<1$, let
$$\Tilde{\mathscr{L}}(t,Q_0,E)=\Bigg\{L\geq0:\sum_{\substack{I\in\mathcal{C}_E(Q_0)\\|I|\geq L}}\frac{|I|}{|Q_0|}\geq t\Bigg\}\hspace{0.25cm}$$
and
$$\Tilde{\mathscr{S}}(t,Q_0,E)=\Bigg\{L\geq0:\sum_{\substack{I\in\mathcal{C}_E(Q_0)\\|I|\leq L}}\frac{|I|}{|Q_0|}\geq t\Bigg\}.$$
Then, we consider the functions
\begin{equation*}
    \Tilde{\mathcal{L}}_{Q_0}(t):=\sup\Tilde{\mathscr{L}}(t,Q_0,E)
\end{equation*}
and
\begin{equation*}
    \Tilde{\mathcal{S}}_{Q_0}(t):=\inf\Tilde{\mathscr{S}}(t,Q_0,E).
\end{equation*}

The next result shows that these newly introduced quantities are naturally related to their dyadic counterparts that had been used so far. 

\begin{lemma}
    \label{lemma6.5}
    Let $E$ be a closed non-empty set with zero measure and let $0<t'<t<1$. Then, for every cube $Q_0$ intersecting $E$, the inequalities
    \begin{equation}
        \label{eq12}
        \mathcal{L}_{Q_0}(t)\leq\Tilde{\mathcal{L}}_{Q_0}(t)\leq4\mathcal{L}_{Q_0}(\tfrac{1}{4}t)
    \end{equation}
    and
    \begin{equation}
        \label{eq13}
        C(t,t')\:\Tilde{\mathcal{S}}_{Q_0}(t')\leq\mathcal{S}_{Q_0}(t)\leq\Tilde{\mathcal{S}}_{Q_0}(t)
    \end{equation}
    hold. The constant $C$ appearing in \eqref{eq13} depends only on $t$ and $t'$.
\end{lemma}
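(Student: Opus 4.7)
The strategy is to leverage the relationship between the dyadic decomposition of $Q_0\setminus E$ provided by Lemma~\ref{lemma1} and the decomposition into connected components of $Q_0^\circ\setminus E$: the former refines the latter (modulo the null set $E$), so every $Q\in\mathcal{D}_E(Q_0)$ lies in a unique component $I\in\mathcal{C}_E(Q_0)$ with $l(Q)\leq|I|$, and each $I$ is, up to measure zero, the disjoint union of those cubes in $\mathcal{D}_E(Q_0)$ contained in it. The two easy inequalities fall out immediately. For $\mathcal{L}_{Q_0}(t)\leq\Tilde{\mathcal{L}}_{Q_0}(t)$, grouping cubes of side $\geq L$ by their components gives $\sum_{l(Q)\geq L}|Q|\leq\sum_{|I|\geq L}|I|$, whence $\mathscr{L}(t,Q_0,E)\subset\Tilde{\mathscr{L}}(t,Q_0,E)$. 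For $\mathcal{S}_{Q_0}(t)\leq\Tilde{\mathcal{S}}_{Q_0}(t)$, a component $I$ with $|I|\leq L$ is a union of dyadic cubes of side $\leq L$, yielding $\Tilde{\mathscr{S}}(t,Q_0,E)\subset\mathscr{S}(t,Q_0,E)$.

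For the inequality $\Tilde{\mathcal{L}}_{Q_0}(t)\leq 4\mathcal{L}_{Q_0}(t/4)$, the key one-dimensional fact I would use is that every interval $I\subset Q_0$ contains a dyadic cube of $\mathcal{D}(Q_0)$ of side at least $|I|/4$: if $2^{-k-1}<|I|\leq 2^{-k}$, the dyadic grid of side $2^{-k-1}$ fits at least one cube inside $I$. Enlarging it to the maximal dyadic ancestor still contained in $I$ produces a $Q_I\in\mathcal{D}_E(Q_0)$ with $Q_I\subset I$ and $l(Q_I)\geq|I|/4$. Hence, for any $L<\Tilde{\mathcal{L}}_{Q_0}(t)$,
$$
\sum_{l(Q)\geq L/4}|Q|\;\geq\;\sum_{|I|\geq L}|Q_I|\;\geq\;\tfrac{1}{4}\sum_{|I|\geq L}|I|\;\geq\;\tfrac{t}{4}|Q_0|,
$$
so that $L/4\leq\mathcal{L}_{Q_0}(t/4)$; letting $L\nearrow\Tilde{\mathcal{L}}_{Q_0}(t)$ concludes.

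The main obstacle is the lower bound $C(t,t')\Tilde{\mathcal{S}}_{Q_0}(t')\leq\mathcal{S}_{Q_0}(t)$. I would first prove the following geometric lemma: for every component $I$ with endpoints $a<b$ and every $L>0$,
$$
\sum_{\substack{Q\in\mathcal{D}_E(Q_0),\,Q\subset I\\ l(Q)\leq L}}|Q|\;\leq\;\min(|I|,4L).
$$
The nontrivial bound is $4L$: any such $Q$ satisfies $\pi Q\cap E\neq\emptyset$, and since $Q\subset I\subset Q_0\setminus E$, the sibling half $\pi Q\setminus Q$ must escape $I$ and hence meet $\{a,b\}$; combined with $l(Q)\leq L$, this forces $Q\subset(a,a+2L)\cup(b-2L,b)$, a set of measure at most $4L$.

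To finish, take $L:=\mathcal{S}_{Q_0}(t)$, so that $\sum_{l(Q)\leq L}|Q|\geq t|Q_0|$, and split $\mathcal{C}_E(Q_0)$ into small components ($|I|\leq ML$) and big ones ($|I|>ML$), for a parameter $M>4$ to be chosen. Writing $\sigma$ for the total measure of the small components, the geometric lemma gives that each big component contributes at most $4L$, while the number of big components is at most $(|Q_0|-\sigma)/(ML)$, so
$$
t|Q_0|\;\leq\;\sigma+\tfrac{4}{M}\bigl(|Q_0|-\sigma\bigr),\qquad\text{i.e.,}\qquad\sigma\;\geq\;\frac{t-4/M}{1-4/M}\,|Q_0|.
$$
Choosing $M=4(1-t')/(t-t')$ makes the right-hand side equal to $t'|Q_0|$, so $ML\in\Tilde{\mathscr{S}}(t',Q_0,E)$ and $\Tilde{\mathcal{S}}_{Q_0}(t')\leq ML$, yielding the claim with $C(t,t')=(t-t')/(4(1-t'))$.
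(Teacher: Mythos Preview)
Your proof is correct and follows the same route as the paper: the two easy inclusions are handled identically, and for both harder inequalities the underlying geometric facts coincide with the paper's (a large dyadic subcube inside each component for \eqref{eq12}; the observation that the cubes of $\mathcal{D}_E(Q_0)$ inside a component $I=(a,b)$ with $l(Q)\leq L$ lie in $(a,a+2L)\cup(b-2L,b)$ for \eqref{eq13}). One small slip: in your justification that $I$ contains a dyadic cube of side $\geq|I|/4$, the grid level is off by one---when $2^{-k-1}<|I|\leq 2^{-k}$ you need the grid of side $2^{-k-2}$ (which still gives side $\geq|I|/4$), since an interval of length $\leq 2^{-k}$ need not contain a dyadic interval of length $2^{-k-1}$. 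For the lower bound in \eqref{eq13} the paper runs the argument in the opposite direction (starting from $L<\Tilde{\mathcal{S}}_{Q_0}(t')$, counting at most two cubes per generation inside each big component, and shrinking by $2^{-k}$), whereas you start from $\mathcal{S}_{Q_0}(t)$ and count big components; your packaging is slightly tighter and has the pleasant by-product of an explicit constant $C(t,t')=(t-t')/(4(1-t'))$.
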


\begin{proof}
    We start by considering \eqref{eq12}. For the first inequality, take $L\in\mathscr{L}(t,Q_0,E)$. Then, since
    $$\bigcup_{\substack{Q\in\mathcal{D}_E(Q_0)\\l(Q)\geq L}}Q^\circ\subset\bigcup_{\substack{I\in\mathcal{C}_E(Q_0)\\|I|\geq L}}I,$$
    it follows that
    $$\sum_{|I|\geq L}\frac{|I|}{|Q_0|}\geq\sum_{l(Q)\geq L}\frac{|Q|}{|Q_0|}\geq t.$$
    In other words, $L\in\Tilde{\mathscr{L}}(t,Q_0,E)$ and, in consequence, $L\leq\Tilde{\mathcal{L}}_{Q_0}(t)$. Thus, taking the appropriate limit, we get $\mathcal{L}_{Q_0}(t)\leq\Tilde{\mathcal{L}}_{Q_0}(t)$. For the remaining inequality, take $L\in\Tilde{\mathscr{L}}(t,Q_0,E)$ and write $\{I_\alpha\}_{\alpha\in A}=\{I\in\mathcal{C}_E(Q_0):|I|\geq L\}$ for some adequate index set $A$. Since each $I_\alpha$ is an open set contained in $Q_0^\circ$, we can find a dyadic cube $Q_\alpha\in\mathcal{D}_E(Q_0)$ such that $Q_\alpha\subset I_\alpha$ and $|Q_\alpha|\geq\frac{1}{4}|I_\alpha|\geq\frac{1}{4}L$. Then,
    \begin{align*}
        \sum_{l(Q)\geq \frac{1}{4}L}\frac{|Q|}{|Q_0|}\geq\sum_{\alpha\in A}\frac{|Q_\alpha|}{|Q_0|}\geq\frac{1}{4}\sum_{\alpha\in A}\frac{|I_\alpha|}{|Q_0|}=\frac{1}{4}\sum_{|I|\geq L}\frac{|I|}{|Q_0|}\geq\frac{1}{4}t,
    \end{align*}
    which implies that $\frac{1}{4}L\in\mathscr{L}(\tfrac{1}{4}t,Q_0,E)$, and furthermore that $\Tilde{\mathcal{L}}_{Q_0}(t)\leq4\mathcal{L}_{Q_0}(\frac{1}{4}t)$.
    
    We now consider \eqref{eq13}. To prove the second inequality, let $L\in\Tilde{\mathscr{S}}(t,E,Q_0)$. Since
    $$\bigcup_{\substack{I\in\mathcal{C}_E(Q_0)\\|I|\leq L}}I\subset\bigcup_{\substack{Q\in\mathcal{D}_E(Q_0)\\l(Q)\leq L}}Q,$$
    we have
    $$\sum_{l(Q)\leq L}\frac{|Q|}{|Q_0|}\geq\sum_{|I|\leq L}\frac{|I|}{|Q_0|}\geq t.$$
    In other words, $L\in\mathscr{S}(t,Q_0,E)$ and, thus, $\mathcal{S}_{Q_0}(t)\leq L$. From this, it is deduced that $\mathcal{S}_{Q_0}(t)\leq\Tilde{\mathcal{S}}_{Q_0}(t)$. In order to prove the last inequality, let $0<L<\mathcal{S}_{Q_0}(t')$ and set $\mathcal{A}:=\{I\in\mathcal{C}_E(Q_0):|I|>L\}$. By hypothesis, 
    $$\sum_{I\in\mathcal{C}_E(Q_0)\setminus\mathcal{A}}\frac{|I|}{|Q_0|}<t'.$$
    On the other hand, we can write $\mathcal{D}_E(Q_0)=\bigcup_{I\in\mathcal{C}_E(Q_0)}\{Q:Q\subset I\}$, since for every $Q\in\mathcal{D}_E(Q_0)$ there exists a unique $I\in\mathcal{C}_E(Q_0)$ such that $Q\subset I$. Furthermore, if $I\in\mathcal{A}$ and $\#$ denotes the counting measure, then we claim that
    $$\#\Big(\Big\{Q\in\mathcal{D}_E(Q_0):Q\subset I\land\frac{1}{2^{k+1}}L<l(Q)\leq\frac{1}{2^k}L\Big\}\Big)\leq2$$
    for any $k\in\mathbb{N}_0$. To see this, assume $Q_i$ is a member of the former collection for $i\in\{1,2,3\}$ and write $I=(a,b)$. It follows that, since $\pi Q_i\cap E\neq\emptyset$ and $I\cap E=\emptyset$, at least two of these cubes intersect one same endpoint of $I$. Let us assume, without loss of generality, that $\pi Q_1$ and $\pi Q_2$ both contain $a$, implying $\pi Q_1\cap\pi Q_2\neq\emptyset$. But then, since $l(Q_1)=l(Q_2)$, necessarily $\pi Q_1=\pi Q_2$. Because $Q_1$ and $Q_2$ do not intersect $E$, the latter also implies $Q_1=Q_2$ and the claim holds. Thus, for $I\in\mathcal{A}$ and $k\in\mathbb{N}_0$, we have that
    \begin{align*}
        \Bigg|\bigcup_{\substack{Q\subset I\\l(Q)\leq\frac{1}{2^k}L }}Q\Bigg|=\sum_{\substack{Q\subset I\\l(Q)\leq\frac{1}{2^k}L }}|Q|=\sum_{j=k}^\infty\sum_{\substack{Q\subset I\\ \frac{1}{2^{j+1}}L<l(Q)\leq\frac{1}{2^j}L }}|Q|\leq\sum_{j=k}^\infty\frac{1}{2^{j-1}}L=\frac{1}{2^{k-2}}L<\frac{1}{2^{k-2}}|I|.
    \end{align*}
    Picking $k\in\mathbb{N}_0$ such that $t'+\frac{1}{2^{k-2}}<t$, we get
    \begin{align*}
        \sum_{\substack{Q\in\mathcal{D}_E(Q_0)\\l(Q)\leq\frac{1}{2^k}L}}|Q|= \sum_{I\in\mathcal{C}_E(Q_0)}\sum_{\substack{Q:Q\subset I\\l(Q)\leq\frac{1}{2^k}L}}|Q|=\sum_{I\in\mathcal{C}_E(Q_0)\setminus\mathcal{A}}\sum_{\substack{Q:Q\subset I\\l(Q)\leq\frac{1}{2^k}L}}|Q|+\sum_{I\in\mathcal{A}}\sum_{\substack{Q:Q\subset I\\l(Q)\leq\frac{1}{2^k}L}}|Q|.
    \end{align*}
    For the first series, we write
    \begin{align*}
        \sum_{I\in\mathcal{C}_E(Q_0)\setminus\mathcal{A}}\sum_{\substack{Q:Q\subset I\\l(Q)\leq\frac{1}{2^k}L}}|Q|\leq\sum_{I\in\mathcal{C}_E(Q_0)\setminus\mathcal{A}}|I|<t'|Q_0|.
    \end{align*}
    For the remaining one, we compute
    \begin{align*}
        \sum_{I\in\mathcal{A}}\sum_{\substack{Q:Q\subset I\\l(Q)\leq\frac{1}{2^k}L}}|Q|<\frac{1}{2^{k-2}}\sum_{I\in\mathcal{A}}|I|\leq\frac{1}{2^{k-2}}|Q_0|.
    \end{align*}
    Consequently,
    $$\sum_{\substack{Q\in\mathcal{D}_E(Q_0)\\l(Q)\leq\frac{1}{2^k}L}}|Q|\leq(t'+\tfrac{1}{2^{k-2}})|Q_0|<t|Q_0|,$$
    which implies that $\frac{1}{2^k}L<\mathcal{S}_{Q_0}(t)$. Taking the limit as $L\to\Tilde{\mathcal{S}}_{Q_0}(t'),$ we arrive at $\frac{1}{2^k}\Tilde{\mathcal{S}}_{Q_0}(t')\leq\mathcal{S}_{Q_0}(t)$.
\end{proof}

In view of Lemma \ref{lemma5}, it will be enough to characterize the behavior of $\Tilde{\mathcal{L}}_{Q_0}$ and $\Tilde{\mathcal{S}}_{Q_0}$ to deduce inequalities like the ones appearing in Proposition \ref{prop6}. With this in mind, given a cube $Q_0\subset\mathbb{R}$, let us consider the function $X_Q:Q\to(0,\infty)$ given by    
$$X_Q(\omega):=\sum_{I\in\mathcal{C}_E(Q)}|I|\mathbbm{1}_I(\omega).$$
Namely, $X_Q$ assigns, to each $\omega$ in $Q$, the length of the connected component of $Q^\circ\setminus E$ containing this point (or zero if this point is not contained in any such component). Furthermore, $X_Q$ happens to be a random variable on the probability space $(Q,\mathcal{B}(Q),P_{Q})$, where $\mathcal{B}(Q)$ is the Borel $\sigma$-algebra of subsets of $Q$ and $P_Q(S)=|Q|^{-1}|S|$ for any $S\in\mathcal{B}(Q)$. We will use the standard notation $\mathbb{E}X_Q$ for the expected value of $X_Q$ (with respect to the measure $P_Q$). We will now begin tackling the first statement of Proposition \ref{prop6}.

\begin{lemma}
    \label{lemma6}
    Let $E=E(\{1-\frac{1}{2n}\})$. There exists a constant $C>0$ such that
    \begin{equation}
        \label{eq11}
        \mathbb{E}X_{Q_n}\:\mathbb{E}X_{Q_n}^{-1}\leq C
    \end{equation}
    for every $n\in\mathbb{N}_0$.
\end{lemma}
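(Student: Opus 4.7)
The plan is to exploit the explicit self-similar construction of $E = E(\{1 - 1/(2n)\})$ to reduce the two expectations $\mathbb{E} X_{Q_n}$ and $\mathbb{E} X_{Q_n}^{-1}$ to iterates of three simple recursions, after which the bound will follow from the convergence of $\sum 1/n^2$. To begin, I would verify the structural claim that $E \cap Q_n = E_n$ for every $n$: every translated copy added at step $m > n$ of the construction lies strictly to the right of $Q_n$, while the reflected part $E^-$ meets $Q_n$ only at the origin (which already belongs to $E_n$). In particular $|E \cap Q_n| = 0$, so $\mathcal{C}_E(Q_n)$ coincides with the collection of open gaps of $E_n$ inside $Q_n$. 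By construction, this family decomposes into three blocks: a first unscaled copy of the gaps of $E_{n-1}$ in $Q_{n-1}$, a middle copy with all lengths scaled by $c_n$, and a final unscaled copy. Since the last point of each block coincides exactly with the first point of the next, no merging of gaps occurs at the junctions between blocks.

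Since the intervals $I \in \mathcal{C}_E(Q_n)$ are pairwise disjoint and cover $Q_n$ up to a null set, the definition of $X_{Q_n}$ together with the identity $X_{Q_n}^{-1} = \sum_I |I|^{-1} \mathbbm{1}_I$ a.e.\ yields
\begin{equation*}
    \mathbb{E} X_{Q_n} = \frac{1}{|Q_n|} \sum_{I \in \mathcal{C}_E(Q_n)} |I|^2, \qquad \mathbb{E} X_{Q_n}^{-1} = \frac{\#\mathcal{C}_E(Q_n)}{|Q_n|}.
\end{equation*}
Writing $N_n$, $S_n$, and $L_n$ for $\#\mathcal{C}_E(Q_n)$, $\sum_I |I|^2$, and $|Q_n|$ respectively, the three-block decomposition translates directly into the recursions $N_n = 3 N_{n-1}$, $S_n = (2 + c_n^2) S_{n-1}$, and $L_n = (2 + c_n) L_{n-1}$, with initial values $N_0 = S_0 = L_0 = 1$.

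Setting $R_n := \mathbb{E} X_{Q_n} \, \mathbb{E} X_{Q_n}^{-1} = S_n N_n / L_n^2$, these recursions combine to give
\begin{equation*}
    \frac{R_n}{R_{n-1}} = \frac{3(2 + c_n^2)}{(2 + c_n)^2} = 1 + \frac{2\varepsilon_n^2}{9 - 6\varepsilon_n + \varepsilon_n^2},
\end{equation*}
where $\varepsilon_n := 1 - c_n = 1/(2n)$. Since $\varepsilon_n \leq 1/2$, the denominator stays above $6$, so this ratio is bounded above by $1 + 1/(12 n^2)$; telescoping then yields
\begin{equation*}
    R_n \leq R_0 \prod_{k=1}^{\infty} \Bigl(1 + \frac{1}{12 k^2}\Bigr) < \infty,
\end{equation*}
which establishes \eqref{eq11}. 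I expect the main obstacle to be the bookkeeping in the first step: one must carefully check that no two of the three sub-families of gaps in $Q_n$ merge into a single component at the block boundaries, for otherwise both $N_n = 3 N_{n-1}$ and $S_n = (2 + c_n^2) S_{n-1}$ would be incorrect. Once the identification $E \cap Q_n = E_n$ and the clean three-block decomposition are secured, the remainder of the argument reduces to the elementary computation above.
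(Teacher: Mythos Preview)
Your argument is correct and essentially the same as the paper's: both derive the identical product formula $R_n=\prod_{i=1}^n 3(2+c_i^2)/(2+c_i)^2$ via the three-block self-similarity and then bound it using the convergence of $\sum 1/n^2$. The only cosmetic difference is that the paper packages the recursion as a single identity $\mathbb{E}X_{Q_{n+1}}^\theta=\frac{2+c_{n+1}^{1+\theta}}{2+c_{n+1}}\,\mathbb{E}X_{Q_n}^\theta$ valid for all $\theta$, whereas you track the three quantities $N_n,S_n,L_n$ separately; your explicit check that no gaps merge at the block junctions is a point the paper leaves implicit.
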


\begin{proof}
    Let $x_1,\hdots,x_m>0$ be such that $P_{Q_n}(X_{Q_n}=x_i)=p_i$ with $\sum_{i=1}^mp_i=1$. Then, $X_{Q_{n+1}}(Q_{n+1})=\{x_1,\hdots,x_m,c_{n+1}x_1,\hdots,c_{n+1}x_m\}$ with $P_{Q_n}(X_{Q_{n+1}}=x_i)=(\frac{2}{2+c_{n+1}})p_i$ and $P_{Q_n}(X_{n+1}=c_{n+1}x_i)=(\frac{c_{n+1}}{2+c_{n+1}})p_i$. Thus, for any $\theta\in\mathbb{R}$, we have
    \begin{align*}
        \mathbb{E}X_{Q_{n+1}}^\theta=\sum_{y\in X_{Q_{n+1}}(Q_{n+1})}y^\theta P_{Q_{n+1}}(X_{Q_{n+1}}=y)&= \sum_{i=1}^m\Big(\tfrac{2}{2+c_{n+1}}x_i^\theta p_i+\tfrac{c_{n+1}}{2+c_{n+1}}c_{n+1}^\theta x_i^\theta p_i\Big) \\
        &= \Big(\frac{2+c_{n+1}^{1+\theta}}{2+c_{n+1}}\Big)\sum_{i=1}^mx_i^\theta p_i \\
        &=\Big(\frac{2+c_{n+1}^{1+\theta}}{2+c_{n+1}}\Big)\mathbb{E}X_{Q_n}^\theta.
    \end{align*}
    Using this recursive formula with $\theta=\pm1$, we can write
    \begin{align*}
        \mathbb{E}X_{Q_n}\:\mathbb{E}X_{Q_n}^{-1}&=\Big(\frac{2+c_n^2}{2+c_n}\Big)\Big(\frac{3}{2+c_n}\Big)\mathbb{E}X_{Q_{n-1}}\:\mathbb{E}X_{Q_{n-1}}^{-1} \\
        &\vdots \\
        &=\mathbb{E}X_{Q_0}\:\mathbb{E}X_{Q_0}^{-1}\prod_{i=1}^{n}\frac{3(2+c_i^2)}{(2+c_i)^2} \\
        &=\prod_{i=1}^{n}\frac{3(2+c_i^2)}{(2+c_i)^2},
    \end{align*}
    where the last step follows from the fact that $X_{Q_0}=1$. Replacing $c_i=1-\frac{1}{2i}$ in the previous formula and recalling that $\log(x+1)\leq x$ for every positive $x$, we find that
    \begin{align*}
        \mathbb{E}X_{Q_n}\:\mathbb{E}X_{Q_n}^{-1}=\prod_{i=1}^{n}\frac{3(2+c_i^2)}{(2+c_i)^2}=\prod_{i=1}^{n}\frac{3(2+(1-\frac{1}{2i})^2)}{(3+\frac{1}{2i})^2}&=\prod_{i=1}^{n}\frac{36i^2-12i+3}{36i^2-12i+1} \\
        &=\prod_{i=1}^{n}\Big(1+\frac{2}{36i^2-12i+1}\Big) \\
        &=e^{\sum_{i=1}^n\log\Big(1+\frac{2}{36i^2-12i+1}\Big)} \\
        &\leq e^{\sum_{i=1}^n\frac{2}{36i^2-12i+1}} \\
        &\leq e^{\sum_{i=1}^\infty\frac{2}{(6i-1)^2}} \\
        &=e^{\frac{2}{25}\sum_{i=1}^\infty\frac{1}{i^2}}.
    \end{align*}
    Thus, the proof finishes by taking the bound $C=e^{\frac{\pi^2}{75}}$.
\end{proof}

\begin{lemma}
    \label{lemma7}
    Let $E=E(\{1-\frac{1}{2n}\})$ and fix $0<s<(1+2^n)^{-1}$ . Then, there exists $C_0>0$ depending only on $s$ such that
    $$\Tilde{\mathcal{L}}_{Q_n}(s)\leq C_0\:\Tilde{\mathcal{S}}_{Q_n}(s)$$
    for every $n\in\mathbb{N}_0$.
\end{lemma}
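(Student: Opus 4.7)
The plan is to recognize that $\Tilde{\mathcal{L}}_{Q_n}(s)$ and $\Tilde{\mathcal{S}}_{Q_n}(s)$ are essentially quantiles of the random variable $X_{Q_n}$, so that controlling them amounts to applying Markov's inequality to $X_{Q_n}$ and to $X_{Q_n}^{-1}$ respectively, and then invoking Lemma \ref{lemma6} to bound the resulting product $\mathbb{E}X_{Q_n}\cdot\mathbb{E}X_{Q_n}^{-1}$ uniformly in $n$.

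First I would note that because the components $I\in\mathcal{C}_E(Q_n)$ are pairwise disjoint and their union fills $Q_n$ up to a null set, for each $L\geq 0$ we have
$$P_{Q_n}(X_{Q_n}\geq L)=\sum_{\substack{I\in\mathcal{C}_E(Q_n)\\|I|\geq L}}\frac{|I|}{|Q_n|},\qquad P_{Q_n}(X_{Q_n}\leq L)=\sum_{\substack{I\in\mathcal{C}_E(Q_n)\\|I|\leq L}}\frac{|I|}{|Q_n|}.$$
Consequently $\Tilde{\mathscr{L}}(s,Q_n,E)$ and $\Tilde{\mathscr{S}}(s,Q_n,E)$ coincide respectively with $\{L\geq 0:P_{Q_n}(X_{Q_n}\geq L)\geq s\}$ and $\{L\geq 0:P_{Q_n}(X_{Q_n}\leq L)\geq s\}$. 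Using left-continuity of the non-increasing function $L\mapsto P_{Q_n}(X_{Q_n}\geq L)$ (and right-continuity of the non-decreasing function $L\mapsto P_{Q_n}(X_{Q_n}\leq L)$), I would verify, much as in the proof of Proposition \ref{prop3}, that the supremum and infimum defining $\Tilde{\mathcal{L}}_{Q_n}(s)$ and $\Tilde{\mathcal{S}}_{Q_n}(s)$ are in fact attained, yielding $P_{Q_n}(X_{Q_n}\geq \Tilde{\mathcal{L}}_{Q_n}(s))\geq s$ and $P_{Q_n}(X_{Q_n}\leq \Tilde{\mathcal{S}}_{Q_n}(s))\geq s$.

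Markov's inequality applied to the positive random variable $X_{Q_n}$ then gives
$$s\leq P_{Q_n}(X_{Q_n}\geq \Tilde{\mathcal{L}}_{Q_n}(s))\leq\frac{\mathbb{E}X_{Q_n}}{\Tilde{\mathcal{L}}_{Q_n}(s)},$$
so $\Tilde{\mathcal{L}}_{Q_n}(s)\leq \mathbb{E}X_{Q_n}/s$. Since $X_{Q_n}>0$ a.e.\ (as $|E|=0$) and $\mathbb{E}X_{Q_n}^{-1}<\infty$ by the explicit recursion derived in the proof of Lemma \ref{lemma6}, applying Markov's inequality instead to $X_{Q_n}^{-1}$ gives
$$s\leq P_{Q_n}(X_{Q_n}\leq \Tilde{\mathcal{S}}_{Q_n}(s))=P_{Q_n}\bigl(X_{Q_n}^{-1}\geq \Tilde{\mathcal{S}}_{Q_n}(s)^{-1}\bigr)\leq \Tilde{\mathcal{S}}_{Q_n}(s)\cdot\mathbb{E}X_{Q_n}^{-1},$$
hence $\Tilde{\mathcal{S}}_{Q_n}(s)\geq s/\mathbb{E}X_{Q_n}^{-1}$.

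Dividing the two estimates and applying Lemma \ref{lemma6} yields
$$\frac{\Tilde{\mathcal{L}}_{Q_n}(s)}{\Tilde{\mathcal{S}}_{Q_n}(s)}\leq\frac{\mathbb{E}X_{Q_n}\cdot\mathbb{E}X_{Q_n}^{-1}}{s^2}\leq\frac{C}{s^2},$$
so the lemma holds with $C_0:=Cs^{-2}$. The substantive work has already been carried out in Lemma \ref{lemma6}; the only technical point here is the attainment of the quantiles, a routine left/right-continuity check for monotone piecewise-constant distribution functions. It is worth noting that the restriction $s<(1+2^n)^{-1}$ is not actually used in this argument—it is imposed only to keep the conclusion in the range where Theorem \ref{teo5} can subsequently be applied.
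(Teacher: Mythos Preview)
Your proposal is correct and follows essentially the same approach as the paper: both bound $\Tilde{\mathcal{L}}_{Q_n}(s)$ from above via Markov's inequality applied to $X_{Q_n}$, bound $\Tilde{\mathcal{S}}_{Q_n}(s)$ from below via Markov's inequality applied to $X_{Q_n}^{-1}$, and then invoke Lemma~\ref{lemma6}. The only cosmetic difference is that the paper introduces auxiliary parameters $k>s^{-1}$ and $k'<s$ and uses the strict inequalities $P_{Q_n}(X_{Q_n}\ge k\,\mathbb{E}X_{Q_n})<s$ and $P_{Q_n}(X_{Q_n}^{-1}\ge (k')^{-1}\mathbb{E}X_{Q_n}^{-1})<s$ to avoid discussing attainment, whereas you apply Markov directly at the quantile after the left/right-continuity check; the resulting constants agree up to the harmless factor coming from the choice of $k,k'$.
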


\begin{proof}
    Fix and $0<s<(1+2^n)^{-1}$ let $k$ be any positive number. Observe that
    \begin{align*}
        P_{Q_n}(X_{Q_n}\geq k\:\mathbb{E}X_{Q_n})=P_{Q_n}\Big(\bigcup_{|I|\geq k\:\mathbb{E}X_{Q_n}}I\Big)=\sum_{|I|\geq k\:\mathbb{E}X_{Q_n}}\frac{|I|}{|Q_n|}.
    \end{align*}
    But then, if $k>s^{-1}$, we can use Markov's inequality to get
    \begin{align*}
        \sum_{|I|\geq k\:\mathbb{E}X_{Q_n}}\frac{|I|}{|Q_n|}=P_{Q_n}(X_{Q_n}\geq k\:\mathbb{E}X_{Q_n})\leq\frac{1}{k}<s,
    \end{align*}
    meaning that $\Tilde{\mathcal{L}}_{Q_n}(s)\leq k\:\mathbb{E}X_{Q_n}$. Similarly, since
    \begin{align*}
        P_{Q_n}\Big(X_{Q_n}^{-1}\geq\frac{\mathbb{E}X_{Q_n}^{-1}}{k'}\Big)=P_{Q_n}\Big(X_{Q_n}\leq\frac{k'}{\mathbb{E}X_{Q_n}^{-1}}\Big)=\sum_{|I|\leq k'\:(\mathbb{E}X_{Q_n}^{-1})^{-1}}\frac{|I|}{|Q_n|}
    \end{align*}
    we can apply Markov's inequality to the variable $X_{Q_n}^{-1}$ to obtain
    \begin{align*}
        \sum_{|I|\leq k'\:(\mathbb{E}X_{Q_n}^{-1})^{-1}}\frac{|I|}{|Q_n|}=P_{Q_n}\Big(X_{Q_n}^{-1}\geq\frac{\mathbb{E}X_{Q_n}^{-1}}{k'}\Big)\leq k'<s
    \end{align*}
    if we choose $0<k'<s$, thus $\Tilde{\mathcal{S}}_{Q_n}(s)\geq k'\:(\mathbb{E}X_{Q_n}^{-1})^{-1}$. But then, if $C$ is the constant given by Lemma \ref{lemma7},
    $$\Tilde{\mathcal{L}}_{Q_n}(s)\leq k\:\mathbb{E}X_{Q_n}\leq kC(\mathbb{E}X_{Q_n}^{-1})^{-1}\leq\frac{k}{k'}C\Tilde{\mathcal{S}}_{Q_n}(s).$$
\end{proof}

\begin{proof}[Proof of Proposition \ref{prop6} (i)]
    Follows by the application of Lemmas \ref{lemma6.5} and \ref{lemma7}.
\end{proof}

To tackle the second item of Proposition \ref{prop6} it is better to consider the random variable given by
$$Y_Q(\omega)=-\log_2 X_Q(\omega)$$
which, in the case that $E=E(\{\frac{1}{2}\})$, to each $\omega\in Q$ assigns an integer $k$ such that the connected component $I$ containing $\omega$ has length $2^{-k}$. It was already noted by Mudarra in \cite{MUDARRA} that the pore distribution of $E$ could be described by a binomial distribution. In our setting, this translates into the fact that $Y_{Q_n}$ follows a binomial distribution with parameters $n$ and $q=\frac{1}{5}$, this is, $Y_{Q_n}\sim\text{Bin}(n,q)$. In this regard, it is known that the binomial distribution can be uniformly
approximated by the normal distribution, and that this approximation improves
as the parameter $n$ increases. The main tool for achieving this approximation is the Berry-Esseen theorem, reproduced below.

\begin{theorem}[\textup{\cite[Theorem 11.4.1]{ATHREYA}}]
    \label{teobe}
    Let $X_1,X_2,\hdots$ be a sequence of iid random variables with $\mathbb{E}X_1=\mu$, $\text{Var}X_1=\sigma^2\in(0,\infty)$ and $\mathbb{E}|X_1|^3<\infty$. Then,
    $$\sup_{x\in\mathbb{R}}\Big| P\Big(\frac{X_1+\hdots+X_n-n\mu}{\sigma\sqrt{n}}\leq x\Big)-\Phi(x) \Big|\leq C\frac{\mathbb{E}|X_1-\mu|^3}{\sigma^3\sqrt{n}}$$
    for all $n\in\mathbb{N}$ and where $\Phi$ denotes the standard normal cumulative distribution function. Furthermore, the constant $C\in(0,\infty)$ does not depend on $n$ or the distribution of $X_1$.
\end{theorem}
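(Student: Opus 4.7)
The plan is to follow the classical Fourier-analytic route due to Esseen. Normalize by setting $Y_i := (X_i-\mu)/\sigma$, so that $\mathbb{E}Y_i=0$, $\operatorname{Var}(Y_i)=1$, and $\rho:=\mathbb{E}|Y_1|^3=\mathbb{E}|X_1-\mu|^3/\sigma^3$. Let $\varphi(t):=\mathbb{E}e^{itY_1}$, so that the standardized sum $S_n:=(Y_1+\cdots+Y_n)/\sqrt{n}$ has characteristic function $\varphi_n(t):=\varphi(t/\sqrt{n})^n$, to be compared with $\psi(t):=e^{-t^2/2}$, the characteristic function of $\Phi$. Since the bound to be proved is vacuous when $C\rho/\sqrt{n}\geq 1$, I may assume $\sqrt{n}\geq\rho$ throughout; moreover Lyapunov's inequality gives $\rho\geq (\mathbb{E}Y_1^2)^{3/2}=1$, so $\rho$ is also bounded below.

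The first ingredient is Esseen's smoothing lemma: for distribution functions $F,G$ with $\|G'\|_\infty\leq M$ and any $T>0$,
\[
\sup_{x\in\mathbb{R}}|F(x)-G(x)|\leq \frac{1}{\pi}\int_{-T}^{T}\Bigl|\frac{\hat F(t)-\hat G(t)}{t}\Bigr|\,dt+\frac{24 M}{\pi T}.
\]
This is derived by convolving $F-G$ with a Fejér-type kernel whose Fourier transform is supported in $[-T,T]$, bounding the smoothed difference through Fourier inversion, and then undoing the smoothing via the modulus-of-continuity estimate that $M$ provides on $G$. I would apply the lemma with $F=F_n$ the c.d.f.\ of $S_n$, $G=\Phi$ (so $M=(2\pi)^{-1/2}$), and $T:=c\sqrt{n}/\rho$ for a suitable small absolute constant $c>0$.

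The second, and more delicate, ingredient is the pointwise bound on $\varphi_n(t)-\psi(t)$ for $|t|\leq T$. A third-order Taylor expansion gives $\varphi(u)=1-u^2/2+R(u)$ with $|R(u)|\leq \rho|u|^3/6$; choosing $c$ small enough, the argument $u=t/\sqrt{n}$ remains in a neighborhood of $0$ where a principal branch of $\log\varphi$ is well-defined and satisfies both $|\log\varphi(u)+u^2/2|\leq C\rho|u|^3$ and $\operatorname{Re}\log\varphi(u)\leq -u^2/3$. Multiplying by $n$ and using $|e^a-e^b|\leq e^{\max(\operatorname{Re}a,\operatorname{Re}b)}|a-b|$ then yields
\[
|\varphi_n(t)-\psi(t)|\leq C\,\frac{\rho}{\sqrt{n}}\,|t|^3\,e^{-t^2/4}
\]
uniformly on $|t|\leq T$. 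Dividing by $|t|$ and integrating against $dt$ produces a contribution of order $\rho/\sqrt{n}$ in Esseen's inequality, while the boundary term $24M/(\pi T)$ contributes the same order by the choice of $T$, yielding the claimed bound.

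The main obstacle is the uniform pointwise control of $\varphi_n$ on the whole window $|t|\leq T$: one must verify that the cubic Taylor remainder does not spoil the Gaussian decay $e^{-t^2/4}$, and this is precisely where the combined use of $\rho\geq 1$ and $T=c\sqrt{n}/\rho$ is crucial, ensuring $|u|=|t|/\sqrt{n}\leq c$ is uniformly small regardless of the underlying distribution of $X_1$. Esseen's smoothing lemma itself is standard Fourier analysis that is decoupled from the distributional hypotheses, so it is not the crux of the argument.
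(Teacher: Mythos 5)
The paper does not prove Theorem~\ref{teobe}; it quotes it verbatim from \cite[Theorem~11.4.1]{ATHREYA} and uses it as a black box, so there is no internal proof to compare your argument against. Your sketch is the classical Esseen smoothing proof of the Berry--Esseen bound, which is indeed essentially the argument in that reference and in most textbooks, and the plan is sound: the vacuousness reduction to $\sqrt{n}\geq\rho$ together with the Lyapunov bound $\rho\geq 1$ guarantee that $|u|=|t|/\sqrt{n}\leq c/\rho\leq c$ uniformly over the integration window $|t|\leq T=c\sqrt{n}/\rho$, so the branch of $\log\varphi$, the cubic Taylor remainder $|\log\varphi(u)+u^2/2|\leq C\rho|u|^3$, and the Gaussian decay $\operatorname{Re}\log\varphi(u)\leq -u^2/3$ all hold with absolute constants independent of the law of $X_1$. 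The elementary estimate $|e^a-e^b|\leq e^{\max(\operatorname{Re}a,\operatorname{Re}b)}|a-b|$ then yields your pointwise bound $|\varphi_n(t)-\psi(t)|\lesssim(\rho/\sqrt n)|t|^3e^{-t^2/4}$ on $|t|\leq T$, and the choice of $T$ balances the smoothing term $24M/(\pi T)$ against the integral of $|\varphi_n(t)-\psi(t)|/|t|$, both of order $\rho/\sqrt n$. You correctly flag the smoothing lemma as the remaining standard ingredient; spelling that out (Fej\'er-kernel convolution, Fourier inversion, unwinding via the Lipschitz bound $\|\Phi'\|_\infty=(2\pi)^{-1/2}$) would complete the write-up, but as a plan your proposal is correct and is the intended proof.
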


Since any binomial random variable can be expressed as a sum of iid Bernoulli  random variables, Theorem \ref{teobe}, when applied to such a sequence of functions, ensures the existence of a constant $C>0$, independent of $n$, such that
\begin{equation}
    \label{eq14}
    \sup_{x\in\mathbb{R}}\Big|P_{Q_n}(Y_{Q_n}\leq x) - \Phi\Big(\frac{x-nq}{\sqrt{nq(1-q)}}\Big)\Big|\leq\frac{C}{\sqrt{n}}.
\end{equation}
This can be used to get approximate values for the functions $\Tilde{\mathcal{L}}$ and $\Tilde{\mathcal{S}}$ in the following way.

\begin{proof}[Proof of Proposition \ref{prop6} (ii)]
    Fix $0<t<\frac{1}{8}$ and begin by rewriting the expressions for the functions $\Tilde{\mathcal{L}}$ and $\Tilde{\mathcal{S}}$ in terms of the distribution of $Y_{Q_n}$. Given $L\geq0$, recall that $P_{Q_n}(X_{Q_n}\geq L)=P_{Q_n}(\{\omega\in Q_n:|I|\geq L,\text{ where }\omega\in I\in\mathcal{C}_E(Q_n)\})=\sum_{|I|\geq L}P_{Q_n}(I)=\sum_{I\geq L}\frac{|I|}{|Q_n|}$, thus
    \begin{align*}
        \Tilde{\mathcal{L}}_{Q_n}(t)=\sup\Big\{L\geq0:\sum_{\substack{I\in\mathcal{C}_E(Q_0)\\|I|\geq L}}\frac{|I|}{|Q_n|}\geq t\Big\} &=\sup\{L\in(0,1):P_{Q_n}(X_{Q_n}\geq L)\geq t\} \\
        &=\max\{\tfrac{1}{2^k}:k\in\mathbb{N}_0\land P_{Q_n}(X_{Q_n}\geq \tfrac{1}{2^k})\geq t\} \\
        &=2^{-\min\{k\in\mathbb{N}_0: P_{Q_n}(X_{Q_n}\geq \tfrac{1}{2^k})\geq t\}} \\
        &=2^{-\min\{k\in\mathbb{N}_0: P_{Q_n}(Y_{Q_n}\leq k)\geq t\}}.
    \end{align*}
    Analogously, it can be shown that $\Tilde{\mathcal{S}}_{Q_n}(t)=2^{-\max\{k\in\mathbb{N}_0: P_{Q_n}(Y_{Q_n}\geq k)\geq t\}}$. Then, by \eqref{eq12} and \eqref{eq13}, we have
    \begin{equation}
        \label{eq15}
        \frac{\mathcal{L}_{Q_n}(t)}{\mathcal{S}_{Q_n}(t)}\geq\frac{1}{4}\frac{\Tilde{\mathcal{L}}_{Q_n}(4t)}{\Tilde{\mathcal{S}}_{Q_n}(t)}=\frac{1}{4}2^{\max\{k\in\mathbb{N}_0: P_{Q_n}(Y_{Q_n}\geq k)\geq t\}-\min\{k\in\mathbb{N}_0:P_{Q_n}(Y_{Q_n}\leq k)\geq4t\}}.
    \end{equation}

    In order to get approximate values to the quantities in the exponent above, we consider $z_r$, the standard normal quantile function at $r$. In other words, this function is defined to satisfy $\Phi(z_r)=r$ for every $0<r<1$. Next, pick numbers $\xi_1$ and $\xi_2$ such that $t<\xi_1<4t<\xi_2<\tfrac{1}{2}$. If we set $x_{1-\xi_1}:=nq+(1-z_{\xi_1})\sqrt{nq(1-q)}$ and $x_{\xi_2}:=nq+z_{\xi_2}\sqrt{nq(1-q)}$, then by \eqref{eq14} it follows that
    $$|P_{Q_n}(Y_{Q_n}\leq x_{\xi_2})-\xi_2|\leq C\sqrt{n}^{-1}.$$
    In particular choosing $n>C^2(\xi_2-4t)^{-2}$, we have $P_{Q_n}(Y_{Q_n}\leq\xi_2)\geq4t$ and
    $$\min\{k\in\mathbb{N}_0: P_{Q_n}(Y_{Q_n}\leq k)\geq4t\}\leq x_{\xi_2}.$$
    On the other hand,
    $$|P_{Q_n}(Y_{Q_n}>x_{1-\xi_1})-\xi_1|=|P_{Q_n}(Y_{Q_n}\leq x_{1-\xi_1})-(1-\xi_1)|\leq C\sqrt{n}^{-1},$$
    so taking $n>C^2(\xi_1-t)^{-2}$ implies $P_{Q_n}(Y_{Q_n}>x_{1-\xi_1})\geq t$ and
    $$\max\{k\in\mathbb{N}_0: P_{Q_n}(Y_{Q_n}\geq k)\geq t\}\geq x_{1-\xi_1}.$$
    Finally, let us observe that $\xi_1<\xi_2<1-\xi_1$ and, consequently, $z_{1-\xi_1}-z_{\xi_2}>0$. Thus, combining \eqref{eq15} with the previous estimates we get
    $$\frac{\mathcal{L}_{Q_n}(t)}{\mathcal{S}_{Q_n}(s)}\geq\frac{1}{4}2^{x_{1-\xi_1}-x_{\xi_2}}=\frac{1}{4}2^{(z_{1-\xi_1}-z_{\xi_2})\sqrt{nq(1-q)}}\to\infty\text{ as }n\to\infty.$$
\end{proof}

\bibliographystyle{plain}
\bibliography{main}


%
%


\subsection*{Acknowledgements}
The author would like to thank Carlos Mudarra and Antti Vähäkangas for their insightful comments and suggestions on earlier drafts of this paper. This work was financially supported by Consejo Nacional de Investigaciones Cient\'ificas y T\'ecnicas-CONICET in Argentina. 


\bigskip

\bigskip

\noindent{\textit{Affiliation.} 
	\textsc{Instituto de Matem\'{a}tica Aplicada del Litoral ``Dra. Eleonor Harboure'', CONICET, UNL.}

	\noindent \textit{Address.} \textmd{IMAL, Streets F.~Leloir and A.P.~Calder\'on, CCT CONICET Santa Fe, Predio ``Alberto Cassano'', Colectora Ruta Nac.~168 km~0, Paraje El Pozo, S3007ABA Santa Fe, Argentina.}
	
	%
	\noindent \textit{E-mail:} \verb|ignaciogomez@santafe-conicet.gov.ar|
}

\end{document}